\newtheorem{lem}{Lemma} 
\newtheorem{theorem}{Theorem}
\newtheorem{definition}{Definition}
\newtheorem{cor}{Corollary}
\newtheorem{rmk}{Remark}
\newtheorem{assump}{Assumption}
\def\R{\mathbb{R}}
\def\mc{\mathcal}
\def\mb{\mathbf}
\def\mbb{\mathbb}
\def\ra{\rightarrow}
\def\P{\mathbf{P}}
\def\CGD{\textbf{\texttt{GD}}}
\def\GDA{\textbf{\texttt{GT-GDA}}}
\def\GDAl{\textbf{\texttt{GT-GDA-Lite}}}
\def\DGDA{\textbf{\texttt{D-GDA}}}
\def\CGD{\textbf{\texttt{GD}}}
\def\GD{\textbf{\texttt{GDA}}}
\def\EG{\textbf{\texttt{EG}}}
\def\OGD{\textbf{\texttt{OGDA}}}
\def\R{\mathbb{R}}
\def\mc{\mathcal}
\def\mb{\mathbf}
\def\mbb{\mathbb}
\def\ra{\rightarrow}
\def\la{\leftarrow}
\def\P{\mathbf{P}}
\def\mt{\times}
\def\mbb{\mathbb}
\def\mb{\mathbf}
\def\mc{\mathcal}
\def\wh{\widehat}
\def\wt{\widetilde}
\def\ol{\overline}
\def\bds{\boldsymbol}
\newcommand{\mn}[1]{{\left\vert\kern-0.25ex\left\vert\kern-0.25ex\left\vert\kern0.3ex #1 
		\kern0.3ex\right\vert\kern-0.25ex\right\vert\kern-0.25ex\right\vert}}
\begin{document}
\title{Distributed saddle point problems for strongly concave-convex functions}
\author{Muhammad I. Qureshi and Usman A. Khan\\Tufts University, Medford, MA
		\thanks{The authors are with the Electrical and Computer Engineering Department at Tufts University; \texttt{muhammad.qureshi@tufts.edu} and \texttt{khan@ece.tufts.edu}. This work has been partially supported by NSF under awards \#1903972 and \#1935555. 
		}
	}
\maketitle

\begin{abstract}
In this paper, we propose~$\GDA$, a distributed optimization method to solve saddle point problems of the form:~${\min_{\mb{x}} \max_{\mb{y}} \left\{ F(\mb x,\mb y) :=G(\mb x) + \langle \mb y, \ol{P} \mb x \rangle - H(\mb y) \right\}}$, where the functions~$G(\cdot)$,~$H(\cdot)$, and the the coupling matrix~$\ol{P}$ are distributed over a strongly connected network of nodes.~$\GDA$ is a first-order method that uses gradient tracking to eliminate the dissimilarity caused by heterogeneous data distribution among the nodes. In the most general form,~$\GDA$ includes a consensus over the local coupling matrices to achieve the optimal (unique) saddle point, however, at the expense of increased communication. To avoid this, we propose a more efficient variant~$\GDAl$ that does not incur the additional communication and analyze its convergence in various scenarios. We show that~$\GDA$ converges linearly to the unique saddle point solution when~$G$ is smooth and convex,~$H$ is smooth and strongly convex, and the global coupling matrix~$\ol{P}$ has full column rank. We further characterize the regime under which~$\GDA$ exhibits a network topology-independent convergence behavior. We next show the linear convergence of~$\GDAl$ to an error around the unique saddle point, which goes to zero when the coupling cost~${\langle \mb y, \ol{P} \mb x \rangle}$ is common to all nodes, or when~$G$ and~$H$ are quadratic. Numerical experiments illustrate the convergence properties and importance of~$\GDA$ and~$\GDAl$ for several applications.

\begin{IEEEkeywords}
Decentralized optimization, saddle point problems, constrained optimization, descent ascent methods.
\end{IEEEkeywords}
\end{abstract}

\section{Introduction}\label{intro}
Saddle point or min-max problems are of significant practical value in many signal processing and machine learning applications~\cite{hall, golub, Goodfellow, sinha, jordan, jordan_GDA, stokes}. Applications of interest include but are not limited to constrained and robust optimization, weighted linear regression, and reinforcement learning. In contrast to the traditional minimization problems where the goal is to find a global (or a local) minimum, the objective in saddle point problems is to find a point that maximizes the function value in one direction and minimizes in the other. Consider for example Fig.~\ref{f1} (left), where we show a simple function landscape~(${F:\mbb R^2\ra\mbb R}$) that increases in one direction and decreases in the other. Examples of such functions appear in constrained optimization where adding the constraints as a Lagrangian naturally leads to saddle point formulations. 

\begin{figure}[!h]
\centering
\subfigure{\includegraphics[width=1.7in]{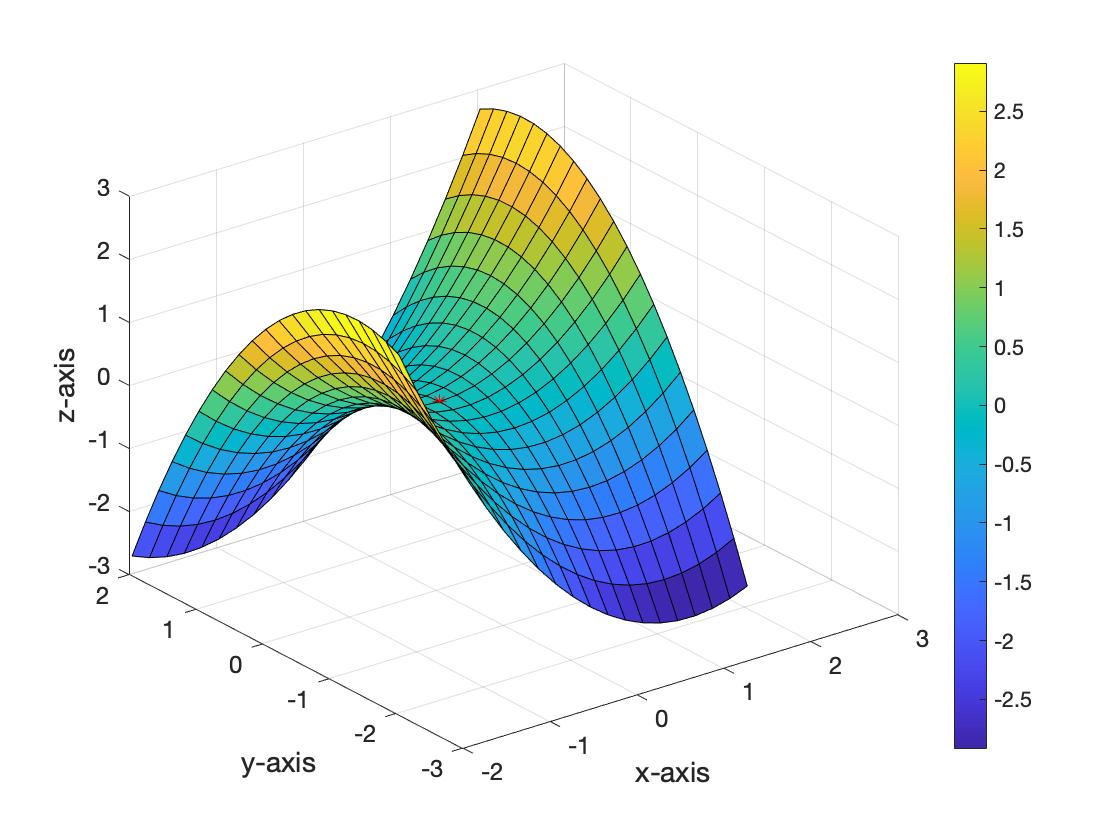}}
\subfigure{\includegraphics[width=1.7in]{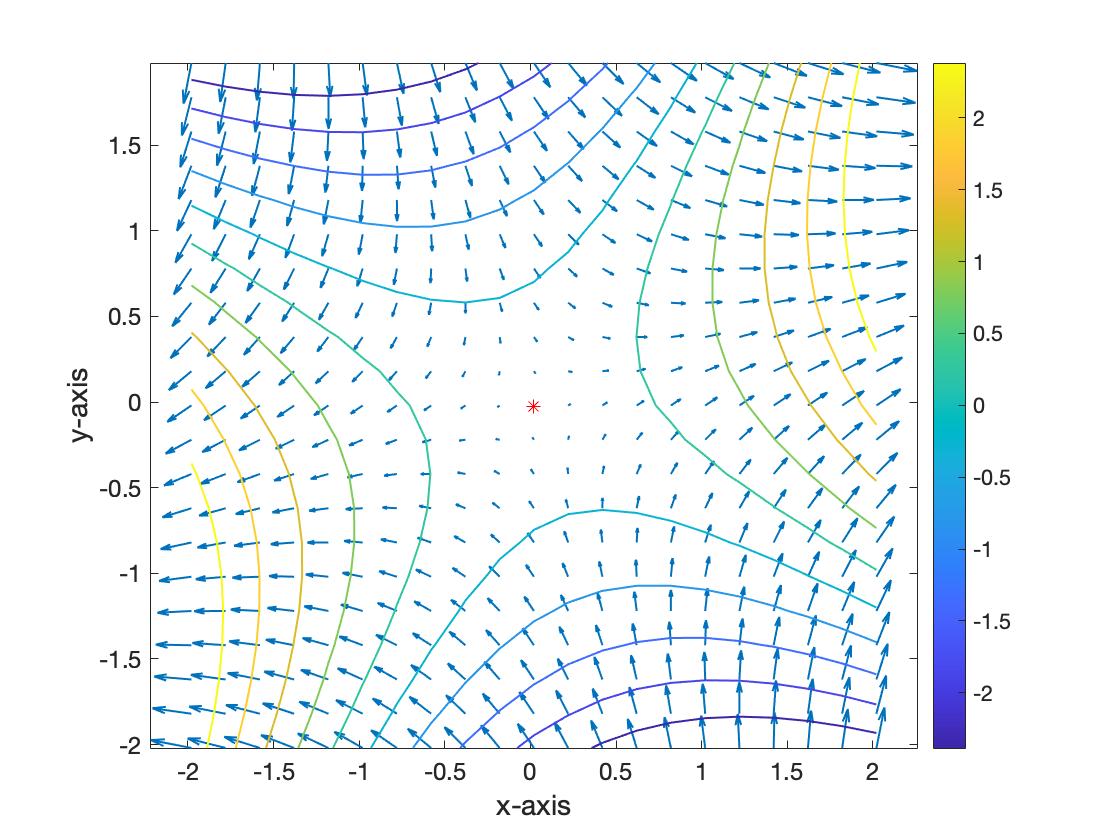}}
\caption{Plot of two dimensional strongly concave-convex saddle point problem (left) and the corresponding gradient directions (right). The curves represent contours to show the value of the function and the red star is the point where partial gradients are all~$0$.}
\label{f1}
\end{figure}

Gradient descent ascent methods are popular approaches towards saddle point problems. To find a saddle point of the function in Fig.~\ref{f1} (left), we would like to maximize~$F$ with respect to the corresponding variable, say~$\mb{y}$, and minimize~$F$ in the direction, say~$\mb{x}$. A natural way is to compute the partial gradients~$\nabla_{\mb{y}} F$ and~$\nabla_{\mb{x}} F$, shown in Fig.~\ref{f1} (right). Then update the~$\mb{y}$ estimate moving in the direction of~$\nabla_{\mb{y}} F$ and the~$\mb{x}$ estimate moving opposite to the direction of~$\nabla_{\mb{x}} F$. The arrows, shown in Fig.~\ref{step}, point towards the next step of~$\GD$ dynamics and the method converges to the unique saddle point (red star) under appropriate conditions on~$F$. The extension of this method for convex and strongly concave, and strongly convex and concave objectives is straightforward as it is intuitive that the saddle point~${(\mb{x}^*, \mb{y}^*) \in {\R^{p_x} \mt \R^{p_y}}}$ is unique such that~${\forall \mb{x} \in \R^{p_x}}$ and~${\forall \mb{y} \in \R^{p_y}}$,
\begin{align*}
    F(\mb{x}^*, \mb{y}) \leq F(\mb{x}^*, \mb{y}^*) \leq F(\mb{x}, \mb{y}^*).
\end{align*}

\begin{figure}[!b]
\centering
\subfigure{\includegraphics[width=2.5in]{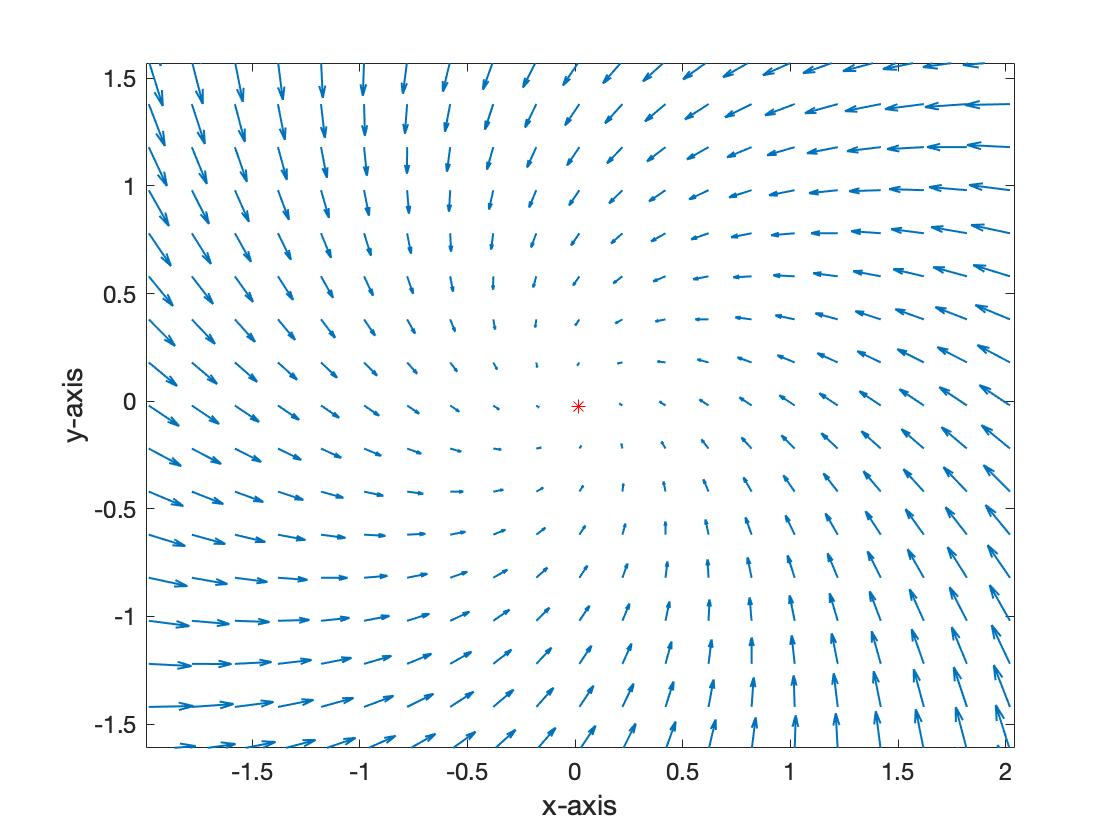}}
\caption{The arrows point towards the
next step of the gradient descent ascent dynamics. The unique saddle point is denoted by the red star.}
\label{step}
\end{figure}

The traditional approaches mentioned above assume that the entire dataset is available at a central location. In many modern applications~\cite{forero2010consensus,6119236,tutorial_nedich,DOPT_survey_yang}, however, data is often collected  by a large number of geographically distributed devices or nodes and communicating/storing the entire dataset at a central location is practically infeasible. Distributed optimization methods are often preferred in such scenarios, which operate by keeping data local to each individual device and exploit local computation and communication to solve the underlying problem. Such methods often deploy two types of computational architectures: (i) master/worker networks -- where the data is split among multiple workers and computations are coordinated by a master (or a parameter server); (ii) peer-to-peer mesh networks -- where the nodes are able to communicate only with nearby nodes over a strongly connected network topology. The topology of mesh networks is more general as it is not limited to hierarchical master/worker architectures. 

In this paper, we are interested in solving distributed saddle point optimization problems over peer-to-peer networks, where the corresponding data and cost functions are distributed among~$n$ nodes, communicating over a strongly connected weight-balanced directed graph. In this formulation, the networked nodes are tasked to find the saddle point of a sum of local cost functions~$f_i(\mb{x}, \mb{y})$, where~$\mb{x} \in \R^{p_x}$ and~$\mb{y}\in \R^{p_y}$. Mathematically, we consider the following problem:
\begin{align*}
    \P: \min_{\mb{x}\in \R^{p_x}} \max_{\mb{y}\in \R^{p_y}} F(\mb{x}, \mb{y}) &= \min_{\mb{x}\in \R^{p_x}} \max_{\mb{y}\in \R^{p_y}} \frac{1}{n} \sum_{i=1}^{n} f_i(\mb{x}, \mb{y}),
\end{align*}
where each local cost~$f_i(\mb{x}, \mb{y})$ is private to node~$i$ and takes the form as follows
\[
f_i(\mb{x}, \mb{y}) :=g_i(\mb x) + \langle \mb y, P_i \mb x \rangle - h_i(\mb y).
\]
We assume that~${G(\mb{x}):=\frac{1}{n} \sum_{i=1}^n g_i(\mb{x})}$ is convex and ${H(\mb{y}):=\frac{1}{n} \sum_{i=1}^n h_i(\mb{y})}$ is strongly convex\footnote{Note that the problem class~$\P$ includes~$-H$, which is strongly concave.}, while the global coupling matrix~${\ol{P} = \frac{1}{n} \sum_{i=1}^n P_i \in \R^{p_y \mt p_x}}$ has full column rank. Such problems arise naturally in many subfields of signal processing, machine learning, and statistics~\cite{du_quad, du,WW_scutari, xianWWSGDA}. 

\subsection{Related work}
Theoretical studies on solutions for saddle point problems, in centralized scenarios, have attracted significant research~\cite{Von_Neumann, hall, Basar_Olsder, golub}. Recently, saddle point or min-max problems have become increasingly relevant because of their applications in constrained and robust optimization, supervised and unsupervised learning, image reconstruction, and reinforcement learning~\cite{Goodfellow, sinha, jordan, stokes}. Commonly studied sub-classes of saddle point problems of the form~$\mb P$ are when~$G(\cdot)$ and~$H(\cdot)$ are assumed to be quadratic~\cite{du_quad} or strongly convex~\cite{mokhtari}. In~\cite{mokhtari}, the authors proposed a unified analysis technique for extra-gradient ($\EG$) and optimistic gradient descent ascent ($\OGD$) methods assuming strongly concave-strongly convex saddle point problems. Furthermore, they discussed the convergence rates for the underlying problem classes and bilinear objective functions. A more general approach was taken in~\cite{du}, where~$G(\cdot)$ was considered convex but not strongly convex. In \cite{OGDA_contact}, the authors established the convergence of~$\OGD$ only assuming the existence of saddle-points. These are first-order methods with some modification of the vanilla gradient descent~($\CGD$). Apart from gradient-based methods, zeroth-order optimization techniques are proposed in~\cite{gdfree_VPoor,bo,ga,pso}. Such methods are useful when gradient computation is not feasible either because the objective function is unknown and the partial derivatives cannot be evaluated for the whole search space, or the evaluation of partial gradients is too expensive. In such cases, Bayesian optimization~\cite{bo} or genetic algorithms~\cite{ga,pso} are used to converge to the saddle points. These techniques are usually slower than gradient-based methods. 

When the data is distributed over a network of nodes, existing work has mainly focused on \textit{minimization} problems~\cite{DSGD_nedich, DGD_Kar, Diffusion_Chen,DEXTRA,add-opt,AB,ABN,proxPDA_Hong,ADMM_hong,SUCAG,Async_NN_Wei,dual_optimal_ICML}. Of significant relevance are distributed methods that assume access to a  first-order oracle where the early work includes~\cite{DSGD_nedich, DGD_Kar, Diffusion_Chen}. The performance of these methods is however limited due to their inability to handle the dissimilarity between local and global cost functions, i.e.,~$\nabla f_i \neq \nabla F$. In other words, linear convergence is only guaranteed but to an inexact solution (with a constant stepsize). To avoid this inaccuracy while keeping linear convergence, recent work~\cite{GT_CDC,NEXT,DSGD_NIPS,D2,harnessing,AB,ABN,AB_proc} propose a gradient tracking technique that allows each node to estimate the global gradient with only local communications. Of note are also distributed stochastic problems where gradient tracking is combined with variance reduction to achieve state-of-the-art results for several different classes of problems~\cite{GT_SAGA_SPM,D2,DAVRG,pushsaga,GT_SARAH_ncvx,GT_HSGD_ncvx,xin2021stochastic}.

On saddle point problems, there is not much progress made towards distributed solutions. Recent work in this regard includes~\cite{WW_GDA_mingyi,  WW_scutari, xianWWSGDA, Local_GDA, AB_GDA} but majority of them do not consider heterogeneous data among different nodes. To deal with the dissimilarity between the local and global costs,~\cite{WW_scutari} develops a function similarity metric. Similarly,~\cite{Local_GDA} proposes local stochastic gradient descent ascent using similarity parameters but it is restricted to master/worker networks that are typical in federated learning scenarios. To get rid of the aforementioned similarity assumptions,~\cite{WW_GDA_mingyi} uses gradient tracking to eliminate this dissimilarity but assumes the functions~$G(\cdot)$ and~$H(\cdot)$ to be quadratic. Similarly,~\cite{AB_GDA} extends~\cite{WW_GDA_mingyi} to directed graphs and show linear convergence for quadratic functions. 

\subsection{Main contributions}
In this paper, we propose~$\GDA$ and~$\GDAl$ to solve the underlying distributed saddle point problem~$\P$. The~$\GDA$ algorithm performs a gradient descent in the~$\mb x$ direction and a gradient ascent in the~$\mb y$ direction, both of which are combined with a network consensus term along with the communication of coupling matrices~$P_i$ with neighbours.~$\GDAl$ is a lighter (communication-efficient) version of~$\GDA$, which does not require consensus over the coupling matrices and therefore reduces the communication complexity. To address the challenge that arises due to the dissimilarity between the local and global costs, the proposed methods use gradient tracking in both of the descend and ascend updates. To the best of our knowledge, there is no existing work for Problem~$\P$ that shows linear convergence when~$G(\cdot)$ is convex and~$H(\cdot)$ is strongly convex. The main contributions of this paper are described next: 

\textbf{Novel Algorithm.} We propose a novel algorithm that uses gradient tracking for distributed gradient descent ascent updates. Gradient tracking implements an extra consensus update where the networked nodes track the global gradients with the help of local information exchange among the nearby nodes. 


\textbf{Weaker assumptions.}
We consider the problem class~$\P$ such that~$g_i$ and~$h_i$ are smooth,~$G$ is convex,~$H$ is strongly convex and the coupling matrix~$\ol{P}$ has full column rank. We note that the constituent local functions,~$g_i(\cdot)$ and~$h_i(\cdot)$, can be non-convex as we only require convexity on their average. Earlier work~\cite{WW_GDA_mingyi} that shows linear convergence of distributed saddle point problems is only applicable to specific quadratic functions~$G$ and~$H$, used in reinforcement learning, and does not provide explicit rates. 
It is noteworthy that the proposed problem~$\P$ can be written in the primal form as follows:
\begin{align*}
    \min_{\mb{x}} \theta(\mb{x}) = \min_{\mb{x}} \left\{ H^*(\ol{P} \mb{x}) + G(\mb{x}) \right \}
\end{align*}
where~$H^*(\cdot)$ is the conjugate function~\cite{rockafellar}, see Definition~\ref{conj_def}, of~$H(\cdot)$. We note that because~$G(\cdot)$ is strongly convex, it is enough to ensure that~$\ol{P}$ has full column rank to conclude that~$\theta(\cdot)$ is strongly convex~\cite{Nesterov_book}. This results in significantly weaker assumptions as compared to the available literature.

\textbf{Linear convergence and explicit rates.} We show that~$\GDA$ converges linearly to the unique saddle point~${(\mb{x}^*, \mb{y}^*)}$ of Problem~$\P$ under the assumptions described above. We note that all these assumptions are necessary for linear convergence even for the centralized case~\cite{du}. Furthermore, we evaluate explicit rates for gradient complexity per iteration and provide a regime in which the convergence of~$\GDA$ is network-independent. We also show linear convergence of~$\GDAl$ in three different scenarios and establish that the rate is the same as~$\GDA$ (potentially with a steady-state error) with reduced communication complexity.

\textbf{Exact analysis for quadratic problems.} We provide exact analytic expressions to develop the convergence characteristics of~$\GDAl$ when~$G$ and~$H$ are in general quadratic forms. With the help of matrix perturbation theory for semi-simple eigenvalues, we show that~$\GDAl$ converges linearly to the unique saddle point of the underlying problem.
\vspace{-0.5cm}
\subsection{Notation and paper organization}
We use lowercase letters to denote scalars, lowercase bold letters to denote vectors, and uppercase letters to denote matrices. We define~$\mb{0}_n$ as vector of~$n$ zeros and~$I_n$ as the identity matrix of~${n \mt n}$ dimensions.
For a function~${F(\mb x, \mb y)}$,~$\nabla_{\mb x}F$ is the gradient of~$F$ with respect to~$\mb x$, while~$\nabla_{\mb y}F$ is the gradient of~$F$ with respect to~$\mb y$. We denote the vector two-norm as~$\|\cdot\|$ and the spectral norm of a matrix induced by this vector norm as~$\mn{\cdot}$. We denote the weighted vector norm of a vector~$\mb{z}$ with respect to a matrix~$C$ as~${\|\mb{z}\|_{C}:=\mb{z}^\top C \mb{z}}$ and the spectral radius of~$C$ as~$\rho(C)$. We consider~$n$ nodes interacting over a potentially directed (balanced) graph~${\mc{G}=\{\mc{V,E}\}}$, where~${\mc V := \{1,\ldots,n\}}$ is the set of node indices, and~${\mc E\subseteq \mc V\times \mc V}$ is a collection of ordered pairs~$(i,r)$ such that node~$r$ can send information to node~$i$, i.e.,~${i\leftarrow r}$.

The rest of the paper is organized as follows. Section~\ref{mot_ad} provides the motivation, with the help of several examples, and describes the algorithms~$\GDA$ and~$\GDAl$. We discuss our main results in Section~\ref{main_res}, provide simulations in Section~\ref{sims}, the convergence analysis in Section~\ref{conv_ana}, and conclude the paper with Section~\ref{conc}.

\section{Motivation and Algorithm
Description} \label{mot_ad}
In this section, we provide some motivating applications that take the form of convex-concave saddle point problems.
\vspace{-0.5cm}
\subsection{Some useful examples}
\textbf{Distributed constrained optimization.} 
Minimizing an objective function under certain constraints is a fundamental requirement for several applications. For equality constraints, such problems can be written as:
\begin{align} \label{const_opt}
    \min_{\mb{x}} G(\mb{x}), \quad    \text{subject to} \quad \ol{P} \mb{x} = \mb{b},
\end{align}
which has a saddle point equivalent form written using the Lagrangian multipliers~$\mb{y}$:
\begin{align*}
    \mc{L}(\mb{x}, \mb{y}) &= G(\mb{x}) + \mb{y}^\top (\ol{P} \mb{x} - \mb{b}) \\
    &= G(\mb{x}) + \mb{y}^\top \ol{P} \mb{x} - \mb{y}^\top \mb{b}.
\end{align*}
Any solution of~\eqref{const_opt} is a saddle point~$(\mb{x}^*, \mb{y}^*)$ of the Lagrangian. Hence, it is sufficient to solve for
\begin{align*}
    \mc{L}(\mb{x}^*, \mb{y}^*) = \min_\mb{x} \max_\mb{y} \mc{L}(\mb{x}, \mb{y}).
\end{align*}
For large-scale problems, the data is distributed heterogeneously and each node possesses its local~$g_i(\cdot), P_i$ and~$\mb{b}_i$. The network aims to solve~\eqref{const_opt} such that~
\begin{equation*}
   G(\mb{x}) := \frac{1}{n} \sum_{i=1}^n g_i(\mb{x}), \quad \ol{P} := \frac{1}{n} \sum_{i=1}^n P_i, \quad \mb{b} := \frac{1}{n} \sum_{i=1}^n \mb{b}_i. 
\end{equation*}
Then for~${h_i(\mb{y}):= \langle \mb{b}_i, \mb{y} \rangle}$ and~${H(\mb{y}) := \frac{1}{n} \sum_{i=1}^n h_i(\mb{y})}$,~\eqref{const_opt} takes the same form as Problem~$\P$.


\textbf{Distributed weighted linear regression and reinforcement learning.} Most applications of weighted linear regression take the form:
\begin{align} \label{wls}
    \min_\mb{x} \|\ol{P} \mb{x} - \mb{b}\|^2_{C^{-1}}.
\end{align}
It can be shown~\cite{du} that the saddle point equivalent of~\eqref{wls} is
\begin{align} \label{spe}
    \min_\mb{x} \max_\mb{y} \left\{ \langle \mb{y}, \mb{b} \rangle - \frac{1}{2} \| \mb{y} \|^2_C - \langle \mb{y}, \ol{P} \mb{x} \rangle \right \},
\end{align}
signifying the importance of the saddle point formulation, which enables a solution of~\eqref{wls} without evaluating the inverse of the matrix~$C$, thus decreasing the computational complexity. When the local data is distributed, i.e.,
\[
{\ol{P} := \frac{1}{n} \sum_{i=1}^n P_i},\quad
{h_i(\mb{y}):= \langle \mb{y}, \mb{b}_i \rangle - \frac{1}{2} \| \mb{y} \|^2_{C_i}},
\]
and~${H(\mb{y}) := \frac{1}{n} \sum_{i=1}^n h_i(\mb{y})}$, the above optimization problem takes the form of Problem~$\P$. Furthermore, we note that the gradients of~\eqref{spe}, with respect to~$\mb{x}$ and~$\mb{y}$, can be evaluated more efficiently as compared to~\eqref{wls}.

In several cases~\cite{du_quad,WW_GDA_mingyi}, reinforcement leaning takes the same form as weighted linear regression. The main objective in reinforcement learning is policy evaluation that requires learning the value function~$V^{\bds{\pi}}$, for any given joint policy~$\bds{\pi}$. The data~${\{s_{k}, s_{k+1}, r_{k}\}_{k=1}^N}$ is generated by the policy~$\bds{\pi}$, where~$s_{k}$ is the state and~$r_k$ is the reward at the~$k$-th time step. With the help of a feature function~$\phi(\cdot)$, which maps each state to a feature vector, we would like to estimate the model parameters~$\mb{x}$ such that~${V^{\bds{\pi}} \approx \langle \phi(s), \mb{x} \rangle}$. A well known method for policy evaluation is to minimize the empirical mean squared projected Bellman error, which is essentially weighted linear regression
\begin{align*}
    \min_\mb{x} \|\ol{P} \mb{x} - \mb{b}\|^2_{C^{-1}},
\end{align*}
where
\[
{\ol{P}:= \sum_{k=1}^N \langle \phi(s_k), \phi(s_k) - \gamma \phi(s_{k+1})\rangle},
\]
for some discount factor~${\gamma \in (0,1)}$,~${C:=\sum_{k=1}^N \|\phi(s_k)\|^2 }$, and~${\mb{b} := \sum_{k=1}^N r_k \phi(s_k) }$. 

\textbf{Supervised learning.} Classical supervised learning problems are essentially empirical risk minimizations. The aim is to learn a linear predictor~$\mb{x}$ when~$H(\cdot)$ is the loss function to be minimized using data matrix~$\ol{P}$, and some regularizer~$G(\cdot)$. The problem can be expressed as below:
\begin{align*}
    \min_{\mb{x}} \left\{ H(\ol{P} \mb{x}) + G(\mb{x}) \right \}
\end{align*}
\vspace{-0.1cm}
The saddle point formulation of above problem can be expressed as:~${\min_{\mb{x}} \max_{\mb{y}} \left\{ G(\mb x) + \langle \mb y, \ol{P} \mb x \rangle - H^*(\mb y)\right \}}$. For large-scale systems, the data~$P_i$ is geographically distributed among different computational nodes and the local functions~$g_i$'s and~$h_i$'s are also private. Problem~$\P$ can be obtained here by choosing~${\ol{P}:= \frac{1}{n} \sum_{i=1}^n P_i, G(\mb{x}) := \sum_{i=1}^n g_i(\mb{x})}$, and~ ${H^*(\mb{y}):=\sum_{i=1}^n h_i^*(\mb{y})}$.
\vspace{-0.2cm}
\subsection{Algorithm development and description}
In order to motivate the proposed algorithm, we first describe the canonical distributed minimization problem: ${\min_{\mb x}G(\mb x):=\tfrac{1}{n}\sum_{i=1}^n g_i(\mb x)}$, where~$G$ is a smooth and strongly convex function. A well-known distributed solution is given by~\cite{DGD_nedich,AB_proc}:
\begin{align}\label{dgd_eq}
\mb x_i^{k+1} = \sum_{r=1}^nw_{ir}
(\mb x_r^k-\alpha\cdot\mb \nabla g_i(\mb x_i^k)),
\end{align}
\vspace{-0.1cm}
where~$\mb x_i^k$ is the estimate of the unique minimizer (denoted as~$\mb x^*$ such that~${\nabla G(\mb x^*)=\tfrac{1}{n} \sum_i \nabla g_i(\mb x^*)=\mb{0}_{p_x}}$) of~$G$ at node~$i$ and time~$k$, and~$w_{ir}$ are the network weights such that~${w_{i,r}\neq 0}$, if and only if~${(i,r)\in\mc E}$, and~${W=\{w_{ir}\}}$ is primitive and doubly stochastic. Consider for the sake of argument that each node at time~$k$ possesses the minimizer~$\mb x^*$; it can be easily verified that~${\mb x_i^{k+1}\neq\mb x^*}$, because the local gradients are not zero at the minimizer, i.e.,~${\nabla g_i(\mb x^*)\neq\mb{0}_{p_x}}$. To address this shortcoming of~\eqref{dgd_eq}, recent work~\cite{GT_CDC,NEXT,DSGD_NIPS,D2,harnessing,AB} uses a certain gradient tracking technique that updates an auxiliary variable~$\mb y_i^k$ over the network such that~${\mb y_i^k\ra\tfrac{1}{n}\sum_i\nabla g_i(\mb x_k^i)}$. The resulting algorithm:
\begin{align}
\mb x_i^{k+1} &= \sum_{r=1}^nw_{ir}
(\mb x_r^k-\alpha\cdot\mb y_i^k),\\
\mb y_i^{k+1} &= \sum_{r=1}^nw_{ir} (\mb y_i^k + \mb \nabla g_i(\mb x_i^{k+1}) - \mb \nabla g_i(\mb x_i^k)),
\end{align}
\vspace{-0.1cm}
converges linearly to~$\mb x^*$ thus removing the bias caused by the local~$\nabla g_i$ vs. global gradient~$\nabla G$ dissimilarity. 

The proposed method~$\GDA$, formally described in Algorithm~\ref{algo}, uses gradient tracking in both the descend and ascend updates. In particular, there are three main components of the~$\GDA$ method: (i) gradient descent for~$\mb{x}$ updates; (ii) gradient ascent for~$\mb{y}$ updates; and (iii) gradient tracking. However, since the coupling matrices~$P_i$'s are not identical at the nodes, we add an intermediate step to implement consensus on~$P_i$'s. Initially,~$\GDA$ requires random state vectors~$\mb{x}_i^0$ and~$\mb{y}_i^0$ at each node~$i$, gradients evaluated with respect to~$\mb{x}$ and~$\mb{y}$ and some positive stepsizes~$\alpha$ and~$\beta$ for descent and ascent updates, respectively. At each iteration~$k$, every node computes gradient descent ascent type updates. The state vectors~$\mb{x}_i^{k+1}$ (and~$\mb{y}_i^{k+1}$) are evaluated by taking a step in the negative (positive) direction of the gradient of global problem, and then sharing them with the neighbouring nodes according to the network topology. It is important to note that~$\mb{q}_i^{k}$ and~$\mb{w}_i^{k}$ are the global gradient tracking vectors, i.e.,~${\mb{q}_i^{k} \ra \nabla_{\mb{x}} F(\mb{x}, \mb{y})}$ and~${\mb{w}_i^{k} \ra \nabla_{\mb{y}} F(\mb{x}, \mb{y})}$. 

\begin{algorithm}[h] \label{algo}
	\caption{~\GDA~at each node~$i$} \label{algo}
	\setstretch{1.35}
    \begin{algorithmic}[1] 
		\Require~${\mb{x}_i^0 \in\mbb R^{p_x}, \mb{y}_i^0\in\mbb R^{p_y}}, {{P}_{i}^{0} = {P}_{i}}, {\{w_{ir}\}_{r=1}^n}, {\alpha>0,}$
		
		~~\:${\beta>0},{\mb{q}_i^0= \nabla_x f_{i}(\mb{x}_{i}^{0}, \mb{y}_{i}^{0})}, {\mb{w}_i^0=\nabla_y f_{i}(\mb{x}_{i}^{0}, \mb{y}_{i}^{0})}$
		\For{$k = 0,1,2,\dots,$},
		\State $ {P}_{i}^{k+1} \!\la \sum_{r=1}^n w_{ir} {P}_{r}^{k}$
		\State $\mb{x}_{i}^{k+1} \la \sum_{r=1}^n w_{ir} (\mb{x}_{r}^{k} - \alpha\cdot\mb{q}_{i}^{k})$
		\State $\mb{q}_{i}^{k+1} \la \sum_{r =1 }^{n}w_{ir} (\mb{q}_{r}^{k} + \nabla_x \mb{f}_i^{k+1} - \nabla_x \mb{f}_i^{k})$
		\State $\mb{y}_{i}^{k+1} \la \sum_{r=1}^n w_{ir} (\mb{y}_{r}^{k} + \beta\cdot\mb{w}_{i}^{k})$ 
		\State $\mb{w}_{i}^{k+1} \!\la \sum_{r =1 }^{n}w_{ir} (\mb{w}_{r}^{k} + \nabla_y \mb{f}_i^{k+1} - \nabla_y \mb{f}_i^{k})$
		\EndFor
	\end{algorithmic}
\end{algorithm}

$\GDAl$: We note that~$\GDA$ implements consensus on the coupling matrices (Step~2), which can result in costly communication when the size of these matrices is large. We thus consider a special case of~$\GDA$ that does not implement consensus on the coupling matrices, namely~$\GDAl$\footnote{We do not explicitly write~$\GDAl$ as it is the same as Algorithm~\ref{algo}:~$\GDA$ but without the consensus (Step~2) on~$P_i$'s.} and characterize its convergence properties for the following three cases: 
\begin{enumerate}[(i)]
\item strongly concave-convex problems with different coupling matrices~$P_i$'s at each node; 
\item strongly concave-convex problems with identical~$P_i$'s;
\item quadratic problems with different~$P_i$'s at each node.
\end{enumerate}
\vspace{-0.5cm}
\section{Main results} \label{main_res}
Before we describe the main results, we first provide some definitions followed by the assumptions required to establish those results.
\begin{definition}[Smoothness and convexity]
A differentiable function~$G: \R^p \ra \R~$ is~$L$-smooth if~${\forall \mb{x}, \mb{y} \in \R^p,}$
\begin{align*}
    \|\nabla G(\mb{x}) - \nabla G(\mb{y}) \| \leq L \| \mb{x} - \mb{y} \|
\end{align*}
and~$\mu$-strongly convex if~${\forall \mb{x}, \mb{y} \in \R^p,}$
\begin{align*}
    G(\mb{y}) + \langle \nabla G(\mb{y}), \mb{x} - \mb{y} \rangle + \frac{\mu}{2} \| \mb{x} - \mb{y} \|^2 \leq G(\mb{x}) .
\end{align*}
It is of significance to note that if~$G(\cdot)$ is~$L$ smooth, then it is also~$(L + \xi)$ smooth,~$\forall \xi > 0$.
\end{definition}
\newpage

\begin{definition}[Conjugate of a function] \label{conj_def} The conjugate of a function~$H:\R^p \ra \R$ is defined as
\begin{align*}
    H^*(\mb{y}) := \sup_{\mb{x}\in \R^p} \left\{ \langle \mb{x}, \mb{y} \rangle - H(\mb{x})\right\}, \quad \forall \mb{y} \in \R^p.
\end{align*}
Moreover, if~$H(\cdot)$ is closed and convex, then~${[H^{*}(\cdot)]^* = H(\cdot)}$, and if~$H(\cdot)$ is~$L$-smooth and~$\mu$-strongly convex, then~$H^*(\cdot)$ is~$\tfrac{1}{\mu}$-smooth and~$\tfrac{1}{L}$-strongly convex~\cite{conj_sm_sc}. It is also of relevance to note that the gradient mappings are inverses of each other, i.e.,~$\nabla H^*(\cdot) = (\nabla H)^{-1}(\cdot)$~\cite{rockafellar}.
\end{definition}

Next, we describe the assumptions under which the convergence results of~$\GDA$ will be developed; note that all of these assumptions may not be applicable at the same time.

\begin{assump}[Smoothness and convexity] \label{sm_scc}
Each local~$g_i$ is~$L_1$-smooth and each~$h_i$ is~$L_2$-smooth, where~$L_1,L_2$ are arbitrary positive constants. Furthermore, the global~$G$ is convex and the global~$H$ is~$\mu$-strongly convex.
\end{assump}

\begin{assump}[Quadratic] \label{quad}
The~$g_i$'s and~$h_i$'s are quadratic functions, i.e., 
\begin{align*}
g_i(\mb x) &:= \mb x^\top Q_i\mb x + \mb{q}_i^\top \mb x + {q}_i,\\
h_i(\mb y) &:= \mb y^\top R_i\mb y + \mb{r}_i^\top \mb y + {r}_i,
\end{align*}

\noindent such that ${\mb{q}_i \in \R^{p_x}}$, ${\mb{r}_i \in \R^{p_y}}$, ${q_i, r_i \in \R}$,~${Q_i \in \R^{p_x \mt p_x}}$, and~${R_i \in \R^{p_y \mt p_y}}$,~$\forall i$. Moreover, for~${\ol{Q} := \frac{1}{n} \sum_{i=1}^n Q_i}$ and~${\ol{R} := \frac{1}{n} \sum_{i=1}^n R_i}$, we assume that~${(\ol{Q}+\ol{Q}^\top)}$ is positive definite and~${(\ol{R}+\ol{R}^\top)}$ is positive semi-definite.
\end{assump}

\begin{assump}[Full ranked coupling matrix] \label{cp_rank}
The coupling matrix~${\ol{P}:= \tfrac{1}{n} \sum_i P_i}$ has full column rank.
\end{assump}

\begin{assump}[Doubly stochastic weights] \label{doub_stoc}
The weight matrices~${W:=\{w_{i,r}\}}$ associated with the network are primitive and doubly stochastic, i.e.,~$ W \mb{1}_n = \mb{1}_n$ and~$\mb{1}_n^\top W = \mb{1}_n^\top$. 
\end{assump}

We note that Assumption~\ref{sm_scc} does not require strong convexity of~$H$ while Assumptions~\ref{sm_scc} and~\ref{cp_rank} are necessary for linear convergence~\cite{du}. The primal problem~${\min_{\mb{x}} \theta(\mb{x})}$ requires poly~$(\epsilon^{-1})$ iterations to obtain an~$\epsilon$-optimal solution even in the centralized case if we ignore any of the above assumptions. It is important to note that Assumptions $1$-$3$ are not applicable simultaneously;~$\GDA$ and~$\GDAl$ are analyzed under different assumptions, clearly stated in each theorem. Next, we define some useful constants to explain the main results. Let~${L := \max \{L_1, L_2\}}$ and let the condition number of~$H(\cdot)$ be~$L_2/\mu$. Furthermore, we denote~${\kappa:= L/\mu \geq L_2/\mu}$. The maximum and minimum singular values of the coupling matrix~$\ol{P}$ are defined as~$\sigma_M$ and~$\sigma_m$, respectively. Moreover, the condition number for~$\ol{P}$ is denoted by~${\gamma:= \sigma_M/\sigma_m}$.
\vspace{-0.6cm}
\subsection{Convergence results for~$\GDA$}
We now provide the main results on the convergence of~$\GDA$ and discuss their attributes.

\begin{theorem} \label{th1}
Consider Problem~$\P$ under Assumptions~\ref{sm_scc},~\ref{cp_rank}, and~\ref{doub_stoc}. For a large enough positive constant~$c>0$, assume the stepsizes are such that 
\begin{align*}
\alpha = \ol{\alpha} &:= \ol{\beta} \frac{{\mu}^2}{c \sigma_M^2},\\
\beta = \ol{\beta} &:= \min \bigg \{ \frac{\sigma_m^2 (1 - \lambda)^2}{192 \sigma_M^2 L}, \frac{L (1 - \lambda)^2}{48 \sigma_M^2},
    \frac{1}{382 \kappa L} \bigg \}.
\end{align*}
Then~$\GDA$ achieves an~$\epsilon$-optimal solution in 
\begin{align*}
    \mc{O} \left( \max \bigg \{ \frac{\gamma^6 \kappa^3}{(1 - \lambda)^4}, \frac{\sigma_m^2 \sigma_M^2 \kappa}{L^2 \mu^2 (1 - \lambda)^4},
    \gamma^2 \kappa^5 \bigg \} \log \frac{1}{\epsilon} \right)
\end{align*}
gradient computations (in parallel) at each node
\end{theorem}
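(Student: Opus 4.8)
The plan is to analyze the coupled dynamics of five quantities: the consensus errors of the $\mb{x}$-iterates, the $\mb{y}$-iterates, and the two gradient-tracking variables $\mb{q}_i^k,\mb{w}_i^k$, together with the optimality gap of the averaged iterates $\ol{\mb{x}}^k := \tfrac1n\sum_i \mb{x}_i^k$ and $\ol{\mb{y}}^k := \tfrac1n\sum_i \mb{y}_i^k$. First I would set up the stacked notation ($\mathbf{x}^k, \mathbf{y}^k, \mathbf{q}^k, \mathbf{w}^k$ collecting the local vectors, $\mathbf{W} = W\otimes I$) and recall the standard contraction $\mn{\mathbf{W} - \tfrac1n\mathbf{1}\mathbf{1}^\top} = \lambda < 1$ guaranteed by Assumption~\ref{doub_stoc}. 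Because the consensus step on $P_i$ (Step~2) is present here, after finitely many iterations the local coupling matrices converge geometrically to $\ol{P}$; I would absorb this transient into the constants or, cleaner, note that the analysis goes through verbatim with $P_i^k$ replaced by $\ol{P}$ plus an exponentially decaying perturbation, so WLOG treat $P_i^k \equiv \ol{P}$. Then the averaged dynamics reduce exactly to a centralized inexact gradient descent–ascent on $F(\ol{\mb{x}},\ol{\mb{y}}) = G(\ol{\mb{x}}) + \langle \ol{\mb{y}}, \ol{P}\ol{\mb{x}}\rangle - H(\ol{\mb{y}})$, where the inexactness is controlled by the consensus errors.

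Next I would establish a one-step linear-system inequality of the form $\mathbf{v}^{k+1} \le \mathbf{A}(\alpha,\beta)\,\mathbf{v}^k$ (entrywise), where $\mathbf{v}^k = \big(\, \mathbb{E}\|\ol{\mb{x}}^k - \mb{x}^*\|^2 + \mathbb{E}\|\ol{\mb{y}}^k - \mb{y}^*\|^2,\ \|\mathbf{x}^k - \mathbf{1}\ol{\mb{x}}^k\|^2,\ \|\mathbf{y}^k - \mathbf{1}\ol{\mb{y}}^k\|^2,\ \|\mathbf{q}^k - \mathbf{1}\ol{\mb{q}}^k\|^2,\ \|\mathbf{w}^k - \mathbf{1}\ol{\mb{w}}^k\|^2 \,\big)^\top$. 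The first-coordinate bound uses the key structural fact already emphasized in the excerpt: the primal function $\theta(\mb{x}) = H^*(\ol{P}\mb{x}) + G(\mb{x})$ is strongly convex (with modulus $\gtrsim \sigma_m^2/L_2$) and smooth (with modulus $\lesssim \sigma_M^2/\mu + L_1$) under Assumptions~\ref{sm_scc} and~\ref{cp_rank}; combined with the standard GDA descent lemma for strongly-convex–strongly-concave-type coupling (using that $H$ is $\mu$-strongly convex so the dual variable tracks $\nabla H^*(\ol{P}\ol{\mb{x}})$ at rate governed by $\beta\mu$), one gets a contraction factor $1 - \Omega(\text{something}/\kappa^{?})$ plus error terms proportional to the consensus errors scaled by $\alpha^2,\beta^2$. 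The consensus-error rows are the usual gradient-tracking estimates: each contracts like $\tfrac{1+\lambda^2}{2}$ and picks up $\mathcal{O}(\alpha^2 + \beta^2)$ feedback from the other coordinates, using $L$-smoothness of $g_i,h_i$ to bound $\|\nabla_x\mathbf{f}^{k+1} - \nabla_x\mathbf{f}^{k}\|$ by the increments of the iterates.

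Then I would show that for the prescribed $\overline\alpha,\overline\beta$ — chosen precisely so that the three competing terms in the $\min$ balance the coupling between the optimality row and the four consensus rows — the spectral radius $\rho(\mathbf{A}(\overline\alpha,\overline\beta)) \le 1 - \delta$ with $\delta = \Omega\!\big(\min\{(1-\lambda)^4/(\gamma^6\kappa^3),\, L^2\mu^2(1-\lambda)^4/(\sigma_m^2\sigma_M^2\kappa),\, 1/(\gamma^2\kappa^5)\}\big)$. The cleanest way is to exhibit an explicit positive vector $\boldsymbol{\xi}$ with $\mathbf{A}\boldsymbol{\xi} \le (1-\delta)\boldsymbol{\xi}$ (a Perron-type certificate): guess $\xi_1 = 1$ and the four consensus weights of order $\alpha^2$ or $\beta^2$ times appropriate singular-value/condition-number factors, then verify the five scalar inequalities, each of which reduces to checking that $\overline\beta$ is below one of the three thresholds. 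Iterating $\mathbf{v}^k \le (1-\delta)^k \mathbf{v}^0$ and taking logs gives the claimed $\mathcal{O}(\delta^{-1}\log\tfrac1\epsilon)$ iteration count, which per node costs $\mathcal{O}(1)$ gradient evaluations. The main obstacle I anticipate is the first-coordinate descent inequality: unlike the strongly-convex–strongly-concave setting, here $G$ is merely convex, so the descent on $\ol{\mb{x}}$ does not come for free — one must route strong convexity through the composite $\theta$, which forces $\alpha$ to be tied to $\beta$ via $\alpha = \overline\beta\,\mu^2/(c\sigma_M^2)$ (so that the dual update is "fast enough" relative to the primal), and getting the cross-terms between the $\mb{x}$- and $\mb{y}$-optimality gaps to close with the right $\kappa$- and $\gamma$-dependence is the delicate bookkeeping that produces the three-way $\min$ in the rate.
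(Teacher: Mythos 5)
Your proposal follows essentially the same route as the paper: a small-gain LTI inequality on the consensus errors, the optimality gaps measured through the strongly convex composite $\theta(\mb{x})=H^*(\ol{P}\mb{x})+G(\mb{x})$ and the dual tracking quantity $\|\ol{\mb{y}}^k-\nabla H^*(\ol{P}\ol{\mb{x}}^k)\|$, the gradient-tracking rows contracting at rate $\lambda$, a Perron-vector certificate $M\bds{\delta}\le\eta\bds{\delta}$ for the spectral radius, and the $P_i$-consensus handled as a geometrically decaying perturbation of the homogeneous system. One small caution: the paper keeps the $\mb{x}$- and $\mb{y}$-optimality gaps as separate coordinates (with distinct Perron weights $\delta_2,\delta_5$), which is what lets the mismatched contraction rates $1-\alpha\sigma_m^2/L_2$ and $1-\beta\mu(1-1/c)$ be balanced via $\alpha\le\beta\mu^2/(c\sigma_M^2)$; lumping them into a single coordinate as you suggest would make that balancing harder.
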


\begin{cor} \label{crr}
Consider Problem~$\P$ under Assumptions~\ref{sm_scc},~\ref{cp_rank}, and~\ref{doub_stoc}, and~$\GDA$ with stepsizes~${\alpha := \ol{\alpha}}$ and~${\beta:=\ol{\beta}}$. If
\begin{align*}
    \gamma^2 \kappa^4 \geq \max \bigg \{ \frac{\gamma^6 \kappa^2}{(1 - \lambda)^4}, \frac{\sigma_m^2 \sigma_M^2}{L^2 \mu^2 (1 - \lambda)^4}\bigg \}, 
\end{align*}
then~$\GDA$ achieves an~$\epsilon$-optimal solution linearly at a network-independent convergence rate of~$\mc{O} \left(\gamma^2 \kappa^5 \log \tfrac{1}{\epsilon}\right)$.
\end{cor}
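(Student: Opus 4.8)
The plan is to obtain Corollary~\ref{crr} as an immediate consequence of the iteration-complexity bound established in Theorem~\ref{th1}; the entire argument reduces to showing that, under the stated hypothesis, the term~$\gamma^2\kappa^5$ dominates the other two entries of the maximum appearing in that bound. First I would take the assumed inequality
\[
\gamma^2 \kappa^4 \geq \max \bigg\{ \frac{\gamma^6 \kappa^2}{(1-\lambda)^4},\ \frac{\sigma_m^2 \sigma_M^2}{L^2 \mu^2 (1-\lambda)^4} \bigg\}
\]
and multiply both sides by~$\kappa = L/\mu$. Since~$\kappa$ is a fixed positive scalar, this preserves the inequality and yields
\[
\gamma^2 \kappa^5 \geq \max \bigg\{ \frac{\gamma^6 \kappa^3}{(1-\lambda)^4},\ \frac{\sigma_m^2 \sigma_M^2 \kappa}{L^2 \mu^2 (1-\lambda)^4} \bigg\}.
\]

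Next I would feed this back into Theorem~\ref{th1}. The maximum in the complexity expression there is taken over precisely the three quantities~$\gamma^6\kappa^3/(1-\lambda)^4$, $\sigma_m^2\sigma_M^2\kappa/(L^2\mu^2(1-\lambda)^4)$, and~$\gamma^2\kappa^5$, so the display above forces that maximum to equal~$\gamma^2\kappa^5$. Substituting, $\GDA$ with stepsizes~$\alpha=\ol\alpha$ and~$\beta=\ol\beta$ reaches an~$\epsilon$-optimal solution in~$\mathcal{O}(\gamma^2\kappa^5 \log\tfrac{1}{\epsilon})$ gradient computations at each node, which is the claimed rate. To see that this rate is network-topology-independent, I would observe that the surviving quantity~$\gamma^2\kappa^5$ is built only from the condition number~$\kappa=L/\mu$ of the objective and the condition number~$\gamma=\sigma_M/\sigma_m$ of the global coupling matrix~$\ol P$, and that neither depends on the mixing matrix~$W$ or its second-largest eigenvalue magnitude~$\lambda$; the network enters the bound of Theorem~\ref{th1} only through the factors~$(1-\lambda)^{-4}$, all of which have now been discarded.

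There is essentially no analytical obstacle here beyond bookkeeping, since the corollary is a pure algebraic specialization of Theorem~\ref{th1}. The one point I would make explicit is the legitimacy of scaling the hypothesis by~$\kappa$: this is valid because~$\kappa = L/\mu \geq L_2/\mu \geq 1$, as~$H$ is simultaneously~$L_2$-smooth (being an average of~$L_2$-smooth~$h_i$'s) and~$\mu$-strongly convex, so~$\kappa$ is a strictly positive---indeed at least unit---multiplier; consequently the regime~$\gamma^2\kappa^4 \geq \max\{\cdots\}$ is a genuine restriction on the problem and network parameters (it holds, for instance, once the graph is well connected and~$\kappa$ is sufficiently large relative to~$\gamma$) rather than a vacuous one.
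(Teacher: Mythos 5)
Your argument is correct and is exactly the route the paper intends: the corollary is stated without a separate proof precisely because it follows from Theorem~\ref{th1} by multiplying the hypothesized inequality through by the positive scalar~$\kappa$, which shows that~$\gamma^2\kappa^5$ dominates the other two entries of the maximum, leaving the network-independent rate~$\mc{O}(\gamma^2\kappa^5\log\tfrac{1}{\epsilon})$. Your additional remark that~$\kappa\geq 1$ (so the regime is non-vacuous) is a nice touch but not needed for the deduction itself.
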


We now discuss these results in the following remarks.

\begin{rmk}[Linear convergence]
$\GDA$ eliminates the dissimilarity caused by heterogeneous data at each node using gradient tracking in both of the~$\mb{x}_i^k$ and~$\mb{y}_i^k$ updates. Theorem~\ref{th1} provides an explicit  linear rate at which~$\GDA$ converges to the unique saddle point~$(\mb{x}^*, \mb{y}^*)$ of Problem~$\P$.
\end{rmk}

\begin{rmk}[Network-independence]The convergence rates for distributed methods are always effected by network parameters, like connectivity or the spectral gap:~$(1-\lambda)$. Corollary~\ref{crr} explicitly describes a regime in which the convergence rate of~$\GDA$ is independent of the network topology. 
\end{rmk}

\begin{rmk}[Communication complexity]At each node, $\GDA$ communicates two~$p_x$-dimensional vectors, two~$p_y$-dimensional vectors, and a~${p_x \mt p_y}$ dimensional coupling matrix, per iteration. In ad hoc peer-to-peer networks, the node deployment may not be deterministic. Let~$\omega$ be the expected degree of the underlying (possibly random) strongly connected communication graph. Then the expected communication complexity required for~$\GDA$ to achieve an~$\epsilon$-optimal solution is
\begin{align*}
    \mc{O} \left( \omega p_xp_y\max \bigg \{ \frac{\gamma^6 \kappa^3}{(1 - \lambda)^4}, \frac{\sigma_m^2 \sigma_M^2 \kappa}{L^2 \mu^2 (1 - \lambda)^4},
    \gamma^2 \kappa^5 \bigg \}  \log \frac{1}{\epsilon} \right)
\end{align*}
scalars per node. We note that~$\omega$ is a function of underlying graph, e.g.,~${\omega=\mc{O}(1)}$ for random geometric graphs and~${\omega=\mc{O}(\log n)}$ for random exponential graphs.
\end{rmk}



\subsection{Convergence results for~$\GDAl$}
We now discuss~$\GDAl$ in the context of the aforementioned special cases below. 

\begin{figure*}[!h]
\centering
\subfigure{\includegraphics[width=2.2in]{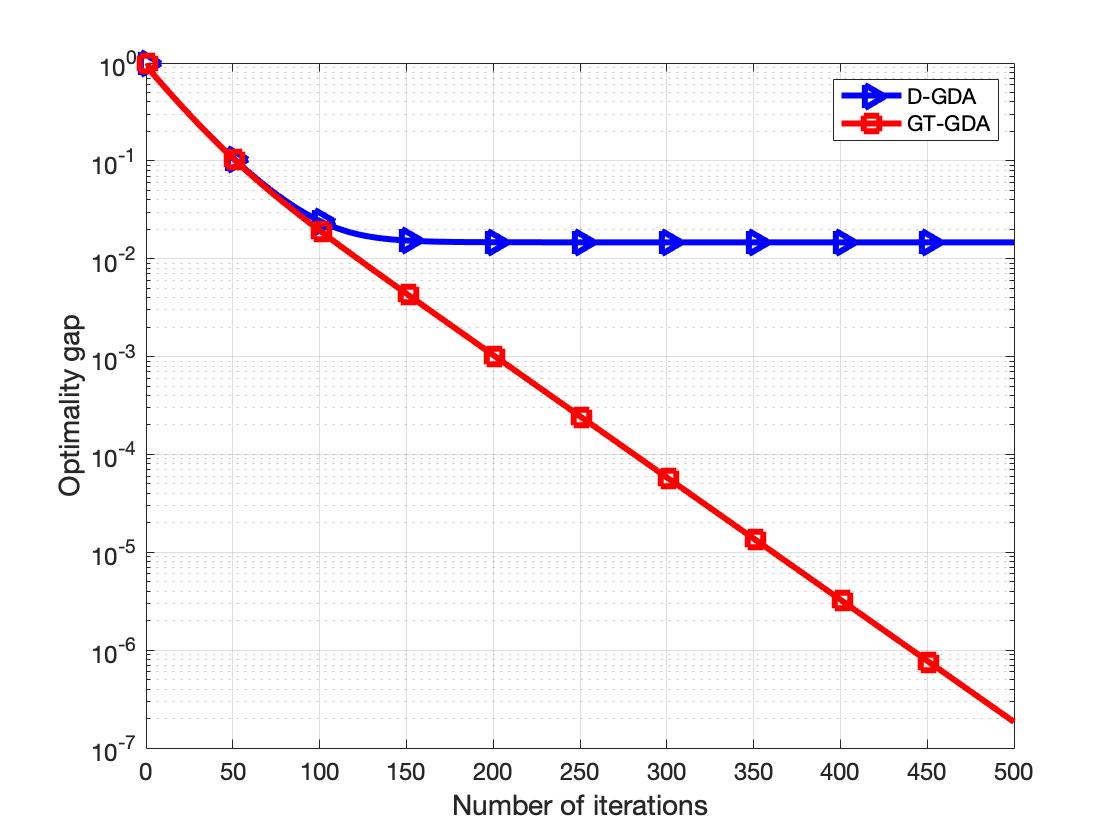}}
\hspace{0.1cm}
\subfigure{\includegraphics[width=2.2in]{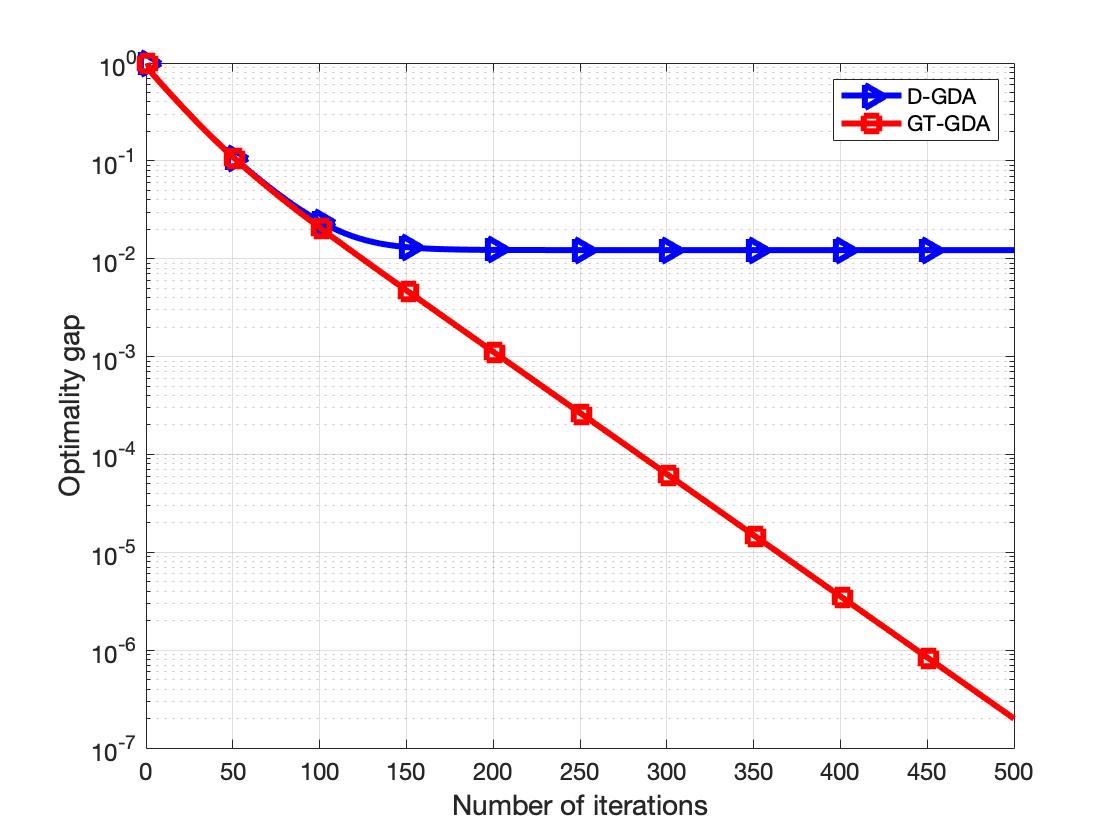}}
\hspace{0.1cm}
\subfigure{\includegraphics[width=2.2in]{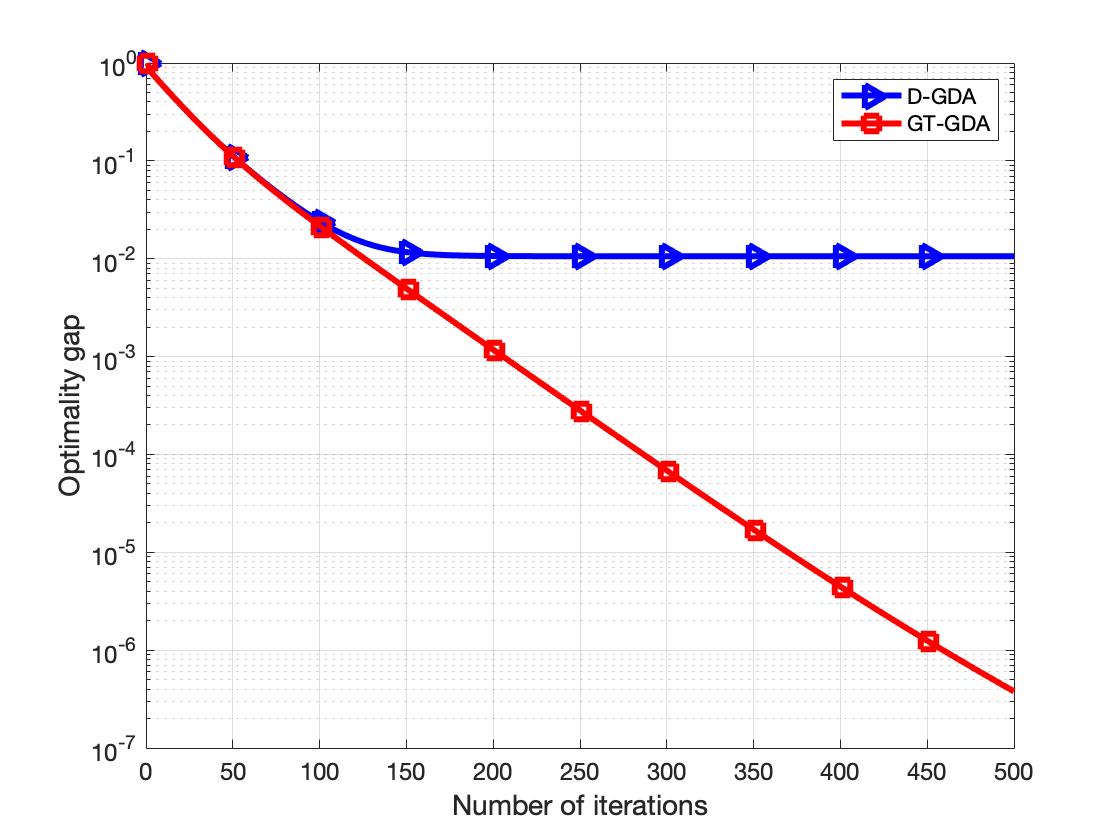}}
\caption{Performance comparison of~$\DGDA$ and~$\GDA$ over a network of~${n=8}$ nodes (left),~${n=32}$ nodes (center) and~${n=100}$ nodes (right).}
\label{r1}
\end{figure*}

\begin{theorem}[$\GDAl$ for Problem~$\P$] \label{th2}
Consider Problem~$\P$ under Assumptions~\ref{sm_scc},~\ref{cp_rank}, and~\ref{doub_stoc}. If the stepsizes~${\alpha \in (0, \ol{\alpha}]}$ and~${\beta \in (0, \ol{\beta}]}$, then~$\GDAl$ converges linearly to an error ball around the unique saddle point.
\end{theorem}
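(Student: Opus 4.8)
The plan is to lift the analysis to the primal reformulation ${\theta(\mb x) = H^*(\ol P\mb x) + G(\mb x)}$. Under Assumptions~\ref{sm_scc} and~\ref{cp_rank}, Definition~\ref{conj_def} gives that $H^*$ is $\tfrac{1}{\mu}$-smooth and $\tfrac{1}{L_2}$-strongly convex, so $\theta$ is $L_\theta$-smooth with ${L_\theta = L_1 + \sigma_M^2/\mu}$ and $\mu_\theta$-strongly convex with ${\mu_\theta = \sigma_m^2/L_2}$; hence the unique minimizer ${\mb x^\star = \arg\min_{\mb x}\theta(\mb x)}$ together with ${\mb y^\star = \nabla H^*(\ol P\mb x^\star)}$ is the unique saddle point of Problem~$\P$. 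Stacking the node iterates, writing $W$ for $W\otimes I$ and ${J := \tfrac1n\mb{1}_n\mb{1}_n^\top\otimes I}$, and using double stochasticity (Assumption~\ref{doub_stoc}) with the exactness of gradient tracking (so $J\mb q^k$ equals $\mb{1}_n$ copies of $\tfrac1n\sum_i\nabla_x f_i(\mb x_i^k,\mb y_i^k)$, and analogously for $\mb w^k$), I would study the entrywise-nonnegative vector
\begin{align*}
\mb v^k := \big(\ \|\ol{\mb x}^k-\mb x^\star\|,\ \ \|\ol{\mb y}^k-\mb y^\star\|,\ \ \|(I-J)\mb x^k\|,\ \ \|(I-J)\mb y^k\|,\ \ \|(I-J)\mb q^k\|,\ \ \|(I-J)\mb w^k\|\ \big)^\top,
\end{align*}
collecting the primal optimality gap, the dual gap, the $\mb x$- and $\mb y$-consensus residuals, and the $\mb q$- and $\mb w$-gradient-tracking residuals, and establish a one-step bound ${\mb v^{k+1} \le M(\alpha,\beta)\,\mb v^k + \mb b}$, with $M(\alpha,\beta)$ a $6\times 6$ nonnegative matrix and ${\mb b = \mb b(D)}$, ${D := \max_i\mn{P_i - \ol P}}$, a forcing term that vanishes when the coupling matrices are identical.

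\textbf{Building the inequality system.} Projecting the updates onto $\mathrm{range}(J)$ yields ${\ol{\mb x}^{k+1} = \ol{\mb x}^k - \alpha\ol{\mb q}^k}$ and ${\ol{\mb y}^{k+1} = \ol{\mb y}^k + \beta\ol{\mb w}^k}$, and I would write ${\ol{\mb q}^k = \nabla\theta(\ol{\mb x}^k) + \mb e_x^k}$, ${\ol{\mb w}^k = (\ol P\ol{\mb x}^k - \nabla H(\ol{\mb y}^k)) + \mb e_y^k}$, where the perturbations $\mb e_x^k,\mb e_y^k$ are bounded by $L$ times the consensus residuals plus the coupling-heterogeneity terms $\tfrac1n\sum_i (P_i-\ol P)^\top(\mb y_i^k - \ol{\mb y}^k)$ and $\tfrac1n\sum_i (P_i-\ol P)(\mb x_i^k - \ol{\mb x}^k)$, hence by ${(L + D)}$ times entries of $\mb v^k$. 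A descent-lemma argument on the strongly convex, smooth $\theta$ (using ${\alpha\le\ol\alpha}$) gives ${\|\ol{\mb x}^{k+1}-\mb x^\star\|\le (1-\Theta(\alpha\mu_\theta))\|\ol{\mb x}^k-\mb x^\star\| + \dots}$, while strong convexity of $H$ with the two-time-scale choice ${\alpha\ll\beta}$ (exactly the regime ${\ol\alpha = \ol\beta\,\mu^2/(c\sigma_M^2)}$, $c$ large) keeps the dual iterate near the best response ${\nabla H^*(\ol P\ol{\mb x}^k)}$, so the second coordinate contracts by ${1-\Theta(\beta\mu)}$. Projecting onto $\mathrm{range}(I-J)$ and using ${\mn{W-J}\le\lambda<1}$ gives ${\|(I-J)\mb x^{k+1}\|\le\lambda(\|(I-J)\mb x^k\| + \alpha\|(I-J)\mb q^k\|)}$ and its $\mb y$-analogue, and the tracking residuals are driven by ${\|\nabla_x\mb f^{k+1}-\nabla_x\mb f^k\|\le L_1\|\mb x^{k+1}-\mb x^k\| + \max_i\mn{P_i}\,\|\mb y^{k+1}-\mb y^k\|}$ (and dual analogue), with the one-step changes ${\|\mb x^{k+1}-\mb x^k\|}$, ${\|\mb y^{k+1}-\mb y^k\|}$ expanded through the updates back into the coordinates of $\mb v^k$. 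Collecting these bounds produces $M(\alpha,\beta)$ and $\mb b(D)$.

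\textbf{Contractivity and the error ball.} It remains to show ${\rho(M(\ol\alpha,\ol\beta)) < 1}$. This does not follow by continuity from ${\alpha=\beta=0}$ (there the primal/dual block is the identity), so I would instead exhibit a strictly positive test vector $\bds\delta\in\R^6$ with ${M(\ol\alpha,\ol\beta)\bds\delta < \bds\delta}$ entrywise: the consensus and tracking coordinates of $\bds\delta$ are taken large relative to the primal/dual coordinates so their $\lambda$-contraction dominates the $\mathcal{O}(\alpha+\beta)$ cross-terms, and the ratio ${\ol\alpha/\ol\beta = \mu^2/(c\sigma_M^2)}$ with $c$ large makes the $\alpha$-sized feedback from the dual gap into the primal gap (through $\ol P$) negligible against the ${\Theta(\alpha\mu_\theta)}$ and ${\Theta(\beta\mu)}$ descent; the bounds $\ol\alpha,\ol\beta$ of Theorem~\ref{th1} are precisely those for which such a $\bds\delta$ exists. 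Given ${\rho(M):=\rho<1}$, iterating ${\mb v^{k+1}\le M\mb v^k+\mb b}$ yields ${\mb v^k\le M^k\mb v^0 + (I-M)^{-1}\mb b}$ entrywise, so $\mb v^k$ converges linearly at rate $\rho$ to a ball around the origin of radius ${\|(I-M)^{-1}\mb b\| = \mathcal{O}(\|\mb b\|) = \mathcal{O}(D)}$; translating back, $\GDAl$ converges linearly to an error ball around $(\mb x^\star,\mb y^\star)$, which collapses to a point when ${P_i\equiv\ol P}$ (so ${\mb b=\mb 0}$), recovering exact convergence as in Theorem~\ref{th1}.

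\textbf{Main obstacle.} The crux is the primal--dual coupling: $\theta$ is accessed only through the corrupted surrogate $\ol{\mb q}^k$, so the primal descent cannot be decoupled from the dual ascent, and one must control the mutual feedback among all six coordinates of $\mb v^k$ -- in particular the tracking residuals depend on the one-step iterate changes, which depend on the tracking variables themselves -- without circularity and with constants sharp enough that the single stepsize pair $(\ol\alpha,\ol\beta)$ of Theorem~\ref{th1} works for both $\GDA$ and $\GDAl$. The two-time-scale separation is essential: with a single time scale the argument closes only for merely convex--concave $f_i$, not for the strongly-concave--convex class, so the delicate part is quantifying how small $\ol\alpha/\ol\beta$ (the constant $c$) must be for the test vector $\bds\delta$ to exist while simultaneously verifying that the coupling-heterogeneity forcing $\mb b(D)$ enters only additively and does not degrade the contraction factor $\rho$.
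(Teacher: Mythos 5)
Your proposal follows essentially the same route as the paper: Lemma~\ref{lem1} builds the same six-coordinate LTI inequality ${\mb u^{k+1}\le M_{\alpha,\beta}\mb u^{k}+\wt N_{\alpha,\beta}\mb s^{k}}$ (with the dual gap measured against the best response ${\nabla H^*(\ol P\ol{\mb x}^k)}$ rather than ${\mb y^*}$), Lemma~\ref{lem2} proves ${\rho(M_{\alpha,\beta})\le\eta<1}$ via exactly the positive test vector ${\bds\delta}$ you describe, and the theorem is concluded by the same geometric-series bound ${\limsup_{k}\mb u^{k}\le(I-M_{\alpha,\beta})^{-1}\wt N_{\alpha,\beta}\mb s}$. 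The only bookkeeping difference is that the paper's forcing term consists of ${\alpha\tau\|\mb y^k\|}$ and ${\beta\tau\|\mb x^k\|}$ acting on the full iterate norms, whereas you route the coupling heterogeneity through the consensus residuals; either way the steady-state error is controlled by the stepsizes and by the heterogeneity of the ${P_i}$'s and vanishes when they coincide, matching the statement.
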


\begin{rmk}[Convergence to an inexact solution] 
We note that the speed of convergence for~$\GDAl$ is of the same order as~$\GDA$, however,~$\GDAl$ converges to an error ball around the unique saddle point, which depends on the size of~${\tau := \tfrac{1}{n}\mn{ \sum_{i=1}^n (P_i - \ol{P})}}$. This error~$\tau$ can be eliminated by using identical~$P_i$'s at each node or by having consensus. 
The first possibility is considered in the next theorem and the second is explored in~$\GDA$. 
\end{rmk}

\begin{theorem}[$\GDAl$ for Problem~$\P$ with same~$P_i$'s] \label{th3}
Consider Problem~$\P$ under Assumptions~\ref{sm_scc},~\ref{cp_rank}, and~\ref{doub_stoc} and with identical~$P_i$'s at each node. If the stepsizes~${\alpha = \ol{\alpha}}$ and~${\beta = \ol{\beta}}$, then the computational complexity of~$\GDAl$ to achieve~$\epsilon$-optimal solution is the same as in Theorem~1, whereas the communication cost reduces by a factor of~$\mc O(\min(p_x,p_y))$.
\end{theorem}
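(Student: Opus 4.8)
\medskip
\noindent\textbf{Proof plan for Theorem~\ref{th3}.}
The plan rests on one decisive observation: when every $P_i$ equals $\ol{P}$, Step~2 of Algorithm~\ref{algo} is a fixed point of the consensus map and may simply be deleted. Indeed, if $P_i^k=\ol{P}$ for all $i$, then by row-stochasticity of $W$ (Assumption~\ref{doub_stoc}) we get $P_i^{k+1}=\sum_{r=1}^n w_{ir}P_r^k=\big(\sum_{r=1}^n w_{ir}\big)\ol{P}=\ol{P}$, so by induction $P_i^k\equiv\ol{P}$ along the whole trajectory. Hence, with identical coupling matrices, $\GDAl$ and $\GDA$ — started from the same $\{\mb{x}_i^0,\mb{y}_i^0,\mb{q}_i^0,\mb{w}_i^0\}$ — generate \emph{identical} iterate sequences, and therefore identical per-node gradient-computation counts. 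Since Theorem~\ref{th1} establishes the $\epsilon$-optimal gradient-computation complexity for $\GDA$ using only Assumptions~\ref{sm_scc},~\ref{cp_rank}, and~\ref{doub_stoc} — precisely the hypotheses here — that complexity bound transfers verbatim to $\GDAl$ with identical $P_i$'s. (Equivalently, one may invoke Theorem~\ref{th2}: its error ball is governed by $\tau=\tfrac1n\mn{\sum_{i=1}^n(P_i-\ol{P})}$, which is $0$ in the present case, so the residual vanishes and $\GDAl$ converges linearly at the $\GDA$ rate.)

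It then remains to account for communication. Per iteration, $\GDA$ sends at each node two $p_x$-vectors ($\mb{x}_i^k,\mb{q}_i^k$), two $p_y$-vectors ($\mb{y}_i^k,\mb{w}_i^k$), and the $p_x\times p_y$ matrix $P_i^k$ of Step~2, i.e.\ $\Theta(p_x+p_y+p_xp_y)=\Theta(p_xp_y)$ scalars; $\GDAl$ omits the matrix and sends $\Theta(p_x+p_y)$ scalars. Because both methods reach an $\epsilon$-optimal solution in the same number of iterations (by the first part), the ratio of total communication costs equals the ratio of per-iteration costs, namely $\Theta\big(\tfrac{p_xp_y}{p_x+p_y}\big)$. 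Since $\tfrac12\min(p_x,p_y)\le\tfrac{p_xp_y}{p_x+p_y}\le\min(p_x,p_y)$, this reduction factor is $\Theta(\min(p_x,p_y))$, hence in particular $\mc{O}(\min(p_x,p_y))$, which is the claim.

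The only point that needs care is the first step: one must be certain that nothing in the proof of Theorem~\ref{th1} in Section~\ref{conv_ana} exploits the consensus dynamics on $P_i$ in a way that is not automatically met when the $P_i$'s coincide. The iterate-identity argument above settles this, but it can also be read off the Lyapunov construction of Theorem~\ref{th1}: the coupling-consensus error $\tfrac1n\sum_i\mn{P_i^k-\ol{P}}$ enters there only as a non-negative forcing term multiplying the remaining error quantities, and it is identically zero here, so the contraction inequality — and the admissible stepsize ranges it imposes — specialize to the $\GDA$ ones with no loss. I expect this bookkeeping (confirming that the forcing term genuinely decouples, rather than, say, tightening the stepsize constraints) to be the main, though minor, obstacle; everything else is a direct transcription of Theorem~\ref{th1} together with the elementary count above.
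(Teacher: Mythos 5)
Your proposal is correct, and its primary route is slightly more direct than the paper's. The paper proves Theorem~\ref{th3} by specializing the LTI system of Lemma~\ref{lem1}: with identical $P_i$'s the quantity $\tau$ vanishes, so the forcing term $\wt{N}_{\alpha,\beta}\mb{s}^k$ drops out, leaving $\mb{u}^{k+1}\le M_{\alpha,\beta}\mb{u}^k$ with $\rho(M_{\alpha,\beta})<1$ from Lemma~\ref{lem2} — exactly the argument you sketch in your parenthetical "equivalently" remark and in your final paragraph about the forcing term decoupling. Your lead argument — that Step~2 of Algorithm~\ref{algo} is a fixed point of the consensus map when $P_i\equiv\ol{P}$, so $\GDA$ and $\GDAl$ produce \emph{identical} iterates and Theorem~\ref{th1} transfers verbatim — is a clean observation the paper does not state explicitly; it buys you the complexity claim without re-examining the error system at all, and it makes transparent \emph{why} the stepsize ranges need no adjustment. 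Your communication accounting ($\Theta(p_xp_y)$ versus $\Theta(p_x+p_y)$ per iteration over the same number of iterations, with $p_xp_y/(p_x+p_y)=\Theta(\min(p_x,p_y))$) is also more explicit than the paper's, which only asserts the factor in the surrounding remark. No gaps.
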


\begin{rmk}[Reduced communication complexity]We note for~$\GDAl$, each node communicates two~$p_x$ dimensional vectors and two~$p_y$ dimensional vectors per iteration. For large values of~${\max\{p_x, p_y\}}$, this is significantly less than what is required for~$\GDA$, i.e.,~$\mc{O}(p_x p_y)$. This makes~$\GDAl$ more convenient for applications where communication budget is low.
\end{rmk}

\begin{theorem}[$\GDAl$ for quadratic problems] \label{th4}
Consider Problem~$\P$ under Assumptions~\ref{quad},~\ref{cp_rank}, and~\ref{doub_stoc} (with different $P_i$'s at the nodes). If the stepsizes~$\alpha$ and~$\beta$ are small enough, then~$\GDAl$ converges linearly to the unique saddle point~${(\mb{x}^*, \mb{y}^*)}$ without consensus on~$P_i$'s.
\end{theorem}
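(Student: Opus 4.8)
\emph{Proof plan.} Under Assumption~\ref{quad} the local gradients $\nabla_{\mb x}f_i$ and $\nabla_{\mb y}f_i$ are affine in $(\mb x,\mb y)$ with constant Hessian blocks $\nabla^2_{\mb x\mb x}f_i=Q_i+Q_i^\top$, $\nabla^2_{\mb x\mb y}f_i=P_i^\top$, $\nabla^2_{\mb y\mb y}f_i=-(R_i+R_i^\top)$, so the entire $\GDAl$ recursion on the stacked state $\mb z^k:=(\mb x^k;\mb q^k;\mb y^k;\mb w^k)$ (each block collecting the $n$ local copies) is an affine map. The first step is to rewrite it in error coordinates $\mb e^k:=\mb z^k-\mb z^\star$: using $\sum_r w_{ir}=1$, the gradient-tracking invariant $\tfrac1n\sum_i\mb q_i^k=\tfrac1n\sum_i\nabla_{\mb x}f_i(\mb x_i^k,\mb y_i^k)$, and the identities $\nabla_{\mb x}F(\mb x^*,\mb y^*)=\mb 0=\nabla_{\mb y}F(\mb x^*,\mb y^*)$, one checks that the fixed point $\mb z^\star$ has $\mb x_i=\mb x^*$, $\mb y_i=\mb y^*$, $\mb q_i=\mb w_i=\mb 0$ for all $i$, and that $\mb e^{k+1}=\mc M(\alpha,\beta)\mb e^k$ is a \emph{linear} time-invariant recursion with $\mc M(\alpha,\beta)=\mc M_0+\alpha\mc A+\beta\mc B$ affine in the stepsizes (the two stepsizes never multiply). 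Moreover the affine manifold carved out by the gradient-tracking invariant is $\GDAl$-invariant and contains $\mb z^\star$, so the iterates live on a fixed linear subspace $\mc S$ of codimension $p_x+p_y$ that $\mc M(\alpha,\beta)$ leaves invariant; linear convergence to $(\mb x^*,\mb y^*)$ is then equivalent to $\rho\big(\mc M(\alpha,\beta)|_{\mc S}\big)<1$. (On the full space $\mc M(\alpha,\beta)$ retains eigenvalues at $1$ coming from the redundancy of the tracking variables, which is why the restriction to $\mc S$ is essential.)

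The second step analyzes $\mc M_0:=\mc M(0,0)$. With the block ordering $(\mb x,\mb y,\mb q,\mb w)$ it is block lower-triangular with diagonal blocks $W\otimes I_{p_x}$, $W\otimes I_{p_y}$, $W\otimes I_{p_x}$, $W\otimes I_{p_y}$, so by Assumption~\ref{doub_stoc} (primitive, doubly stochastic $W$) its only unit-modulus eigenvalue is $1$, with all others strictly inside the unit disk. The key structural fact --- the ``semi-simple eigenvalue'' alluded to earlier --- is that this eigenvalue $1$ is semisimple: every off-diagonal (coupling) block of $\mc M_0$ carries a factor $(W-I)\otimes I$, which annihilates consensual vectors, so $\mc M_0\mb v=\mb v$ forces all four blocks of $\mb v$ to be consensual; hence the $1$-eigenspace equals the consensus subspace and its dimension matches the algebraic multiplicity. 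Intersecting with $\mc S$, the relevant $1$-eigenspace becomes $(p_x+p_y)$-dimensional, parametrized by $(\mb a_x,\mb a_y)$ through $\mb x_i\!\equiv\!\mb a_x$, $\mb y_i\!\equiv\!\mb a_y$, $\mb q_i\!\equiv\!(\ol{Q}+\ol{Q}^\top)\mb a_x+\ol{P}^\top\mb a_y$, $\mb w_i\!\equiv\!\ol{P}\mb a_x-(\ol{R}+\ol{R}^\top)\mb a_y$.

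The third step is the perturbation argument. Since the eigenvalue $1$ of $\mc M_0$ is semisimple, the perturbed eigenvalues move \emph{linearly} in $(\alpha,\beta)$ (no fractional-power blow-up), and standard first-order perturbation theory for semisimple eigenvalues shows that, for small stepsizes, the restriction of $\mc M(\alpha,\beta)|_{\mc S}$ to the invariant subspace associated with the near-$1$ cluster agrees, up to an $O(\|(\alpha,\beta)\|^2)$ correction, with the \emph{centralized} gradient descent--ascent map on $F$ linearized at $(\mb x^*,\mb y^*)$, whose Jacobian in the $(\mb a_x,\mb a_y)$ coordinates is $J(\alpha,\beta)=\left(\begin{smallmatrix} I-\alpha(\ol{Q}+\ol{Q}^\top) & -\alpha\ol{P}^\top\\[2pt] \beta\ol{P} & I-\beta(\ol{R}+\ol{R}^\top)\end{smallmatrix}\right)$; all eigenvalues of $\mc M(\alpha,\beta)|_{\mc S}$ away from this cluster stay strictly inside the unit disk by continuity. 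It then suffices to show $\rho(J(\alpha,\beta))<1$ for suitably small stepsizes in the $\ol{\alpha},\ol{\beta}$ regime of Theorem~\ref{th1} (so that $\alpha$ is sufficiently smaller than $\beta$): equivalently, $J(\alpha,\beta)-I$ is Hurwitz. When $\alpha=\beta$ this is immediate --- its symmetric part is negative semidefinite because $(\ol{Q}+\ol{Q}^\top)\succ0$ and $(\ol{R}+\ol{R}^\top)\succeq0$, and a purely imaginary eigenvalue would force an eigenvector of the form $(\mb 0;\mb v)$ with $\mb v\in\ker(\ol{R}+\ol{R}^\top)$ and $\ol{P}^\top\mb v=\mb 0$, which is ruled out since $\ol{P}$ has full column rank (Assumption~\ref{cp_rank}), i.e.\ in precisely the setting where $\theta(\mb x)=H^*(\ol{P}\mb x)+G(\mb x)$ is strongly convex and the saddle point of Problem~$\P$ is unique. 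More generally $\rho(J(\alpha,\beta))<1$ is exactly the (necessary) condition for centralized descent--ascent on quadratic Problem~$\P$ to converge linearly~\cite{du,mokhtari}. Given $\rho(J(\alpha,\beta))<1$, the near-$1$ cluster is pushed inside the unit disk by a margin $\Theta(\|(\alpha,\beta)\|)$ that dominates the $O(\|(\alpha,\beta)\|^2)$ remainder, so $\rho(\mc M(\alpha,\beta)|_{\mc S})<1$ and the iterates converge linearly to the unique saddle point.

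I expect the main obstacle to be the bookkeeping in the third step: rigorously identifying the first-order perturbation of the near-$1$ cluster with the centralized Jacobian $J(\alpha,\beta)$ --- this requires a careful choice of bi-orthogonal bases for the right and left $1$-eigenspaces of $\mc M_0$ restricted to $\mc S$ and some manipulation of the Kronecker structure in the presence of heterogeneous $P_i$'s --- together with verifying that the second-order remainder is genuinely $O(\|(\alpha,\beta)\|^2)$ and hence dominated by the $\Theta(\|(\alpha,\beta)\|)$ stability margin of $J(\alpha,\beta)$ on the prescribed stepsize curve.
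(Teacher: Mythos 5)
Your route is essentially the paper's: you linearize the quadratic $\GDAl$ recursion into an exact affine/LTI error system, observe that the stepsize-free transition matrix has a semisimple eigenvalue at $1$ (everything else strictly inside the unit disk), and use first-order perturbation theory for semisimple eigenvalues to reduce linear convergence to the Hurwitz property of the centralized descent--ascent generator built from $\ol{Q},\ol{R},\ol{P}$. The paper packages this slightly differently --- it writes the error vector directly in the coordinates $(\mb x^k-W_1^\infty\mb x^k,\ \ol{\mb x}^k-\mb x^*,\ \mb q^k-W_1^\infty\mb q^k,\dots)$, so that $\wt M_0$ has diagonal blocks $\ol W_1,\ I_{p_x},\ \ol W_1,\ \ol W_2,\ I_{p_y},\ \ol W_2$ and exactly $p_x+p_y$ unit eigenvalues with trivially computable left/right eigenvectors, and it sets $\beta=\alpha$ so that the one-parameter perturbation lemma applies verbatim and yields the reduced matrix $S=\bigl(\begin{smallmatrix}-\ol Q & -\ol P^\top\\ \ol P & -\ol R\end{smallmatrix}\bigr)$ --- whereas you work on the full stacked state and quotient by the gradient-tracking invariant. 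Your version buys nothing extra but costs messier eigenvector bookkeeping (the $\mb q,\mb w$ components of the unit eigenvectors are nonzero in your coordinates), and your two-parameter perturbation statement should be pinned to a fixed ray $\beta=c\alpha$ before invoking the semisimple perturbation lemma, which is a one-parameter result.

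The one step that would fail as written is your Hurwitz verification. A purely imaginary eigenvalue of $J(\alpha,\alpha)-I$ forces an eigenvector $(\mb 0;\mb v)$ with $\mb v\in\ker(\ol R+\ol R^\top)\cap\ker(\ol P^\top)$, and you dismiss this ``since $\ol P$ has full column rank.'' That is backwards: full column rank of $\ol P\in\R^{p_y\times p_x}$ kills $\ker\ol P$, not $\ker\ol P^\top$, and when $p_y>p_x$ the latter has dimension $p_y-p_x>0$. What you actually need is $\ker(\ol P^\top)\cap\ker(\ol R+\ol R^\top)=\{0\}$, which is precisely the condition for $\mb y^*$ (hence the saddle point) to be unique in the quadratic case; without it $-S$ has a genuine zero eigenvalue and no stepsize makes the near-$1$ cluster leave the unit circle. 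The paper discharges this point by citing the positive-stability theorem for saddle-point matrices (Theorem~3.6 of the cited reference), whose hypotheses include exactly such a kernel condition, so you should either add that hypothesis explicitly or invoke the same result rather than attributing the conclusion to Assumption~\ref{cp_rank}.
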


\begin{rmk}[Exact analysis]
The convergence analysis we provide for the quadratic case is exact. In other words, we do not use the typical norm bounds and derive the error system of equations as an exact LTI system. Using the concepts from matrix perturbation theory for semi-simple eigenvalues, we show that~$\GDAl$ linearly converges to the unique saddle point of~$\P$ with quadratic cost functions.
\end{rmk}

\section{Simulations} \label{sims}
We now provide numerical experiments to compare the performance of distributed gradient descent ascent with ($\GDA$) and without gradient tracking ($\DGDA$) and verify the theoretical results. We would like to perform a preliminary empirical evaluation on a linear regression problem. We consider the problem of the form:
\begin{align} \label{pb}
    \min_{\mb{x}} \frac{1}{2n} \|\ol{P} \mb{x} - \mb{b}\|^2 + \lambda R(\mb{x}); 
\end{align}
and the saddle point equivalent of above problem is
\begin{align*}
    \min_\mb{x} \max_\mb{y} \left\{ \langle \mb{y}, \mb{b} \rangle - \frac{1}{2} \| \mb{y} \|^2 - \langle \mb{y}, \ol{P} \mb{x} \rangle  + \lambda R(\mb{x}) \right \}.
\end{align*}

Performance characterization using the saddle point form of~\eqref{pb} is common in the literature available on centralized gradient descent ascent~\cite{du, mokhtari}. For large-scale problems when data is available over a geographically distributed nodes, decentralized implementation is often preferred. In this paper, we consider the network of nodes communicating over strongly connected networks of different sizes and connectivity to extensively evaluate the performance of~$\GDA$. Figure~\ref{net} shows two directed exponential networks of~${n=8}$ and~${n=32}$ nodes. We note that although they are directed, their corresponding matrices~$W$ are weight-balanced. To highlight the significance of distributed processing for large-scale problems, we evaluate the simulation results with the networks shown in Fig.~\ref{net} and their extensions to~${n=100}$ and~${n=200}$ nodes.
\begin{figure}[!ht]
\centering
\subfigure{\includegraphics[width=1.7in]{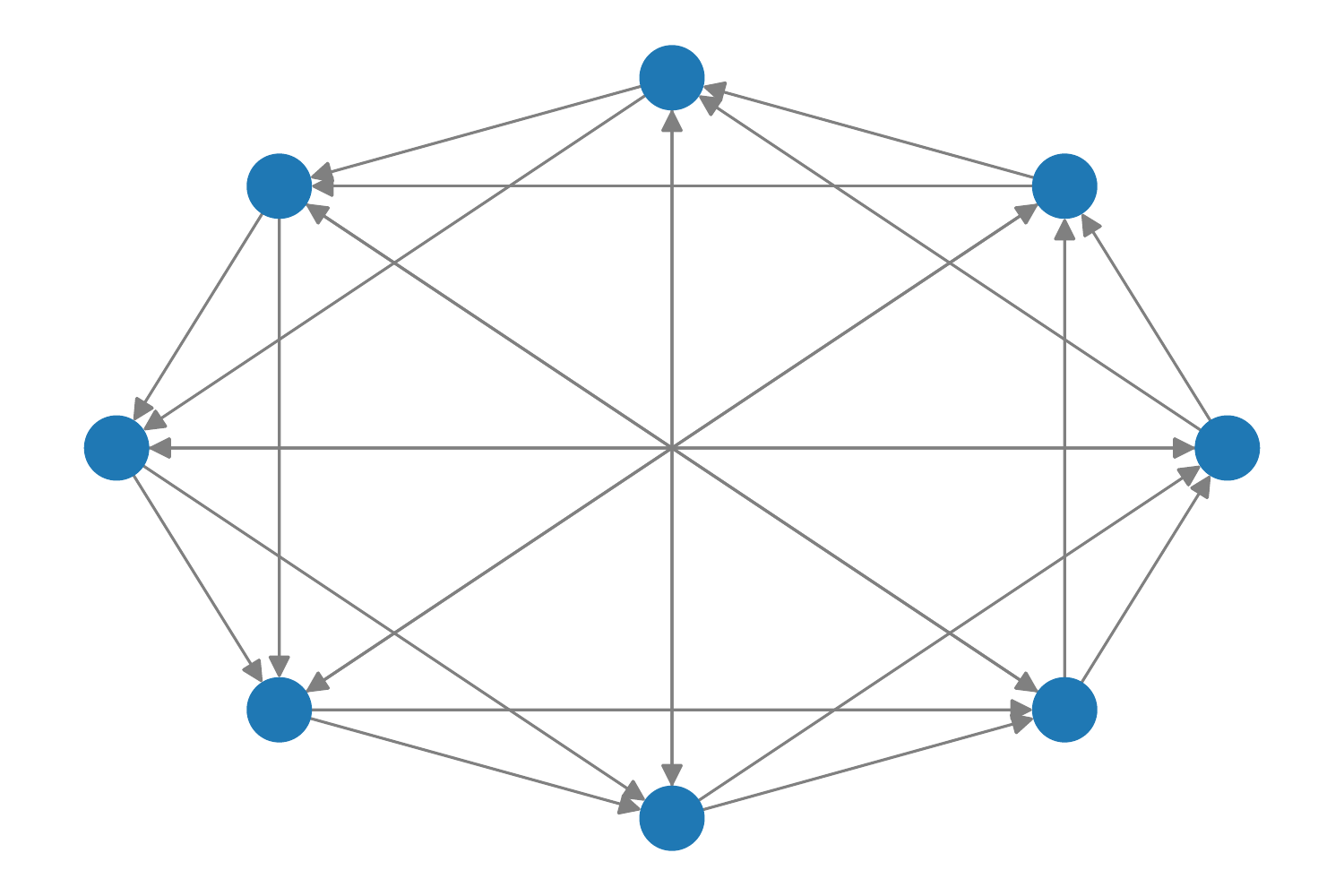}}
\subfigure{\includegraphics[width=1.7in]{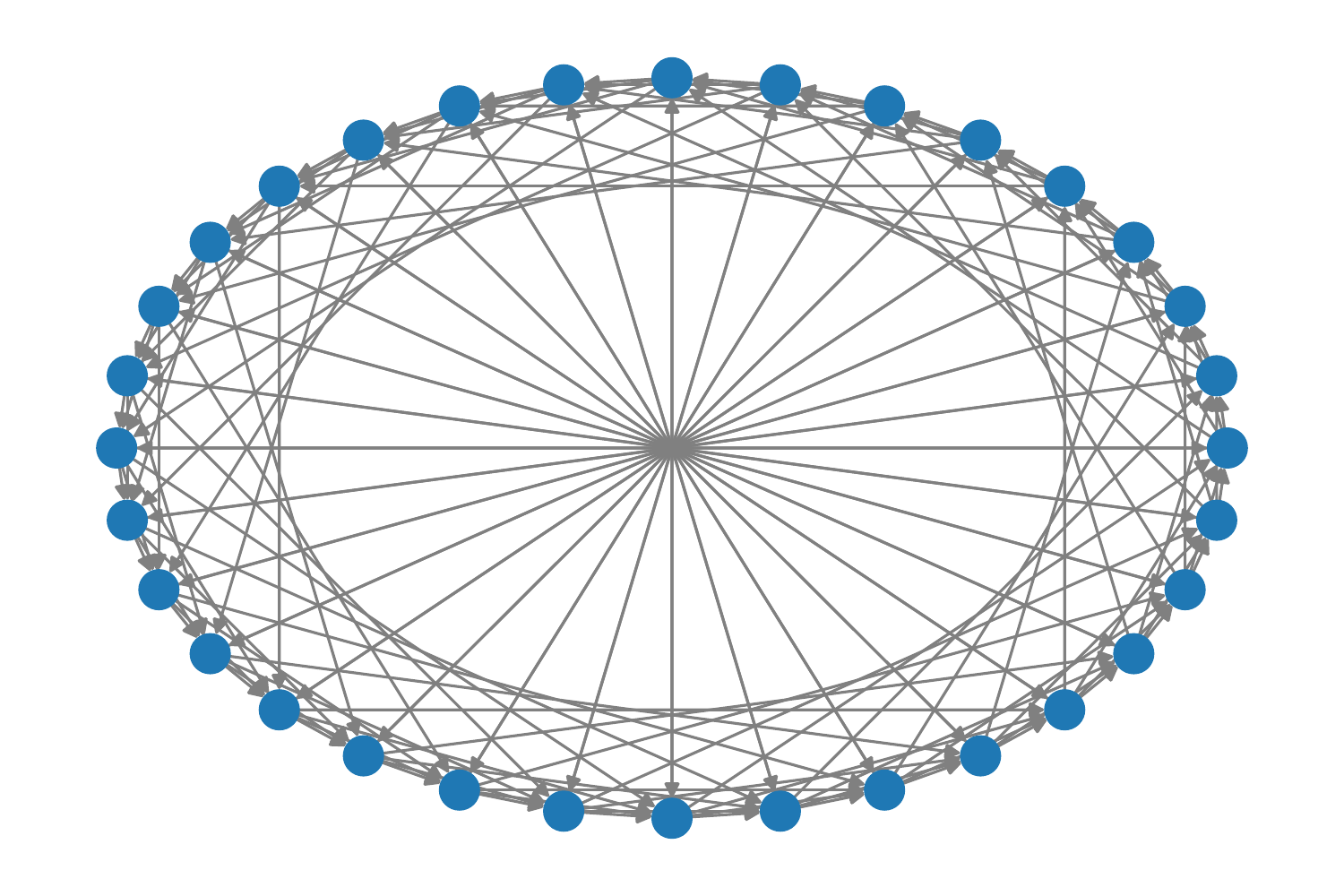}}
\caption{Directed exponential graphs with~${n=8}$ nodes (left) and~${n=32}$ nodes (right).}
\label{net}
\end{figure}

\textbf{Smooth and strongly convex regularizer:} We first consider~\eqref{pb} with smooth and strongly convex regularizer~${R(\mb{x}):= \|\mb{x}\|^2_C}$. Therefore, the resulting problem is strongly-convex strongly-concave. For a peer-to-peer mesh network of~$n$ nodes, each node~$i$ has its private~${\mb{b}_i \in \R^{p_x}}$ and~${C_i \in \R^{p_y \mt p_x}}$ such that the average~${\mb{b} := \frac{1}{n} \sum_{i=1}^n \mb{b}_i}$ and~${C := \frac{1}{n} \sum_{i=1}^n C_i}$, and~${\ol{P}:= \frac{1}{n} \sum_{i=1}^n P_i}$ has full column rank. We set the dimensions~$p_x = 10$ and~$p_y = 4$ and evaluate the performance of~$\GDA$ for data generated by a random Gaussian distribution. 

We characterize the performance by evaluating the optimality gaps:~${\|\ol{\mb{x}}^k - \mb{x}^*\|}+{\|\ol{\mb{y}}^k - \mb{y}^*\|}$. Fig.~\ref{r1} represents the comparison of the simulation results of~$\DGDA$ and~$\GDA$ for different sizes of exponential networks (${n=8,32}$ and~$100$); some shown in figure~\ref{net}. The optimality gap reduces with the increase in the number of iterations. It can be observed that~$\DGDA$ (blue curve) converges to an inexact solution because it evaluates gradients with respect to it's local data at each step; hence move towards local optimal. On the contrary, the proposed method~$\GDA$ (red curve) uses gradient tracking and consistently converge to the unique saddle point of the global problem.

\textbf{Smooth and convex regularizer:} Next we use a smooth but \textit{non} strongly convex regularizer~\cite{Schmidt}:
\begin{align*}
    R(\mb{x}):=\sum_{i=1}^{n} \sum_{j=1}^{p_x} \left[\frac{1}{t_i} \left\{\log(1+e^{t_i x_j}) + \log(1+e^{-t_i x_j}) \right\} \right].
\end{align*} 
Figure~\ref{f2} shows the results for~$\GDA$ over a network of~${n=32}$ and~${n=200}$ nodes. It can be seen that~$\GDA$ converges linearly to the unique saddle point, as it's optimality gap decreases, meanwhile~$\DGDA$ exhibits a similar convergence rate but settles for an inexact solution due to heterogeneous nature of data at different nodes. We note that~$\DGDA$ and~$\GDA$ converges to the same solution when the local cost functions are homogeneous, i.e., same data at each node. Such cases are still of significance for many applications where we have resources for parallel processing to boost computational speed.
\begin{figure}[!h]
\centering
\subfigure{\includegraphics[width=1.6in]{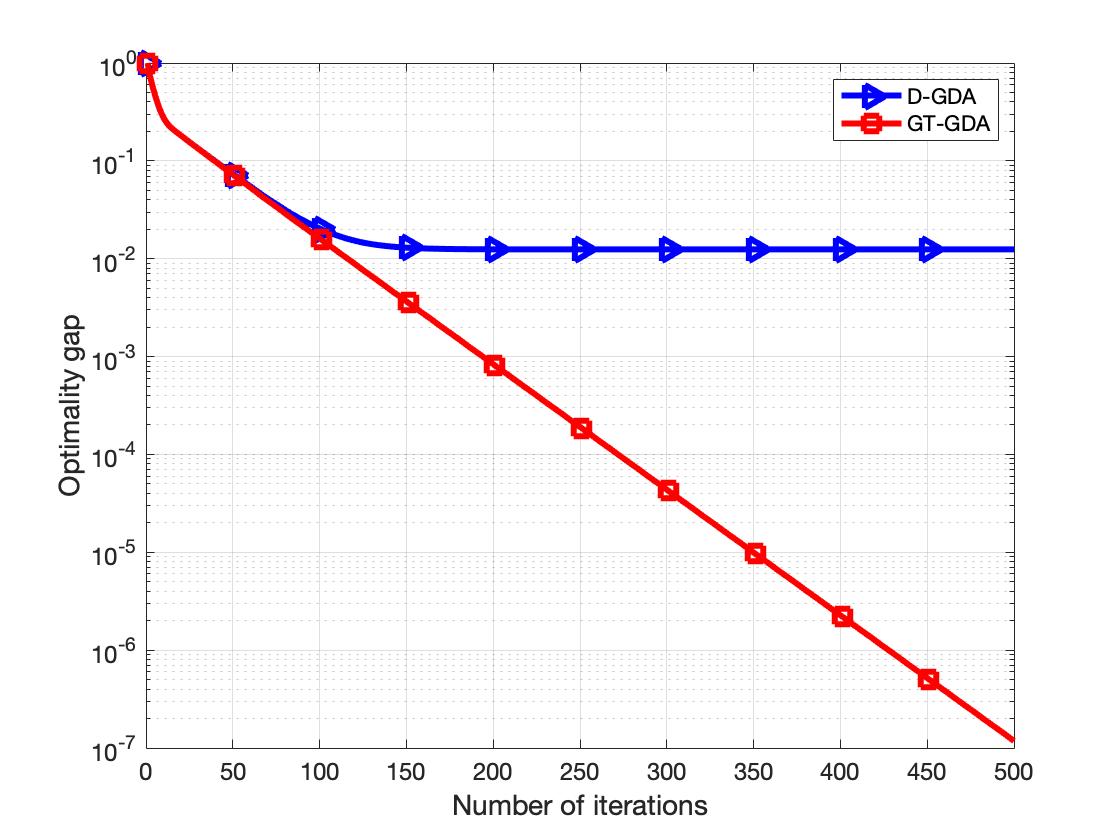}}
\subfigure{\includegraphics[width=1.6in]{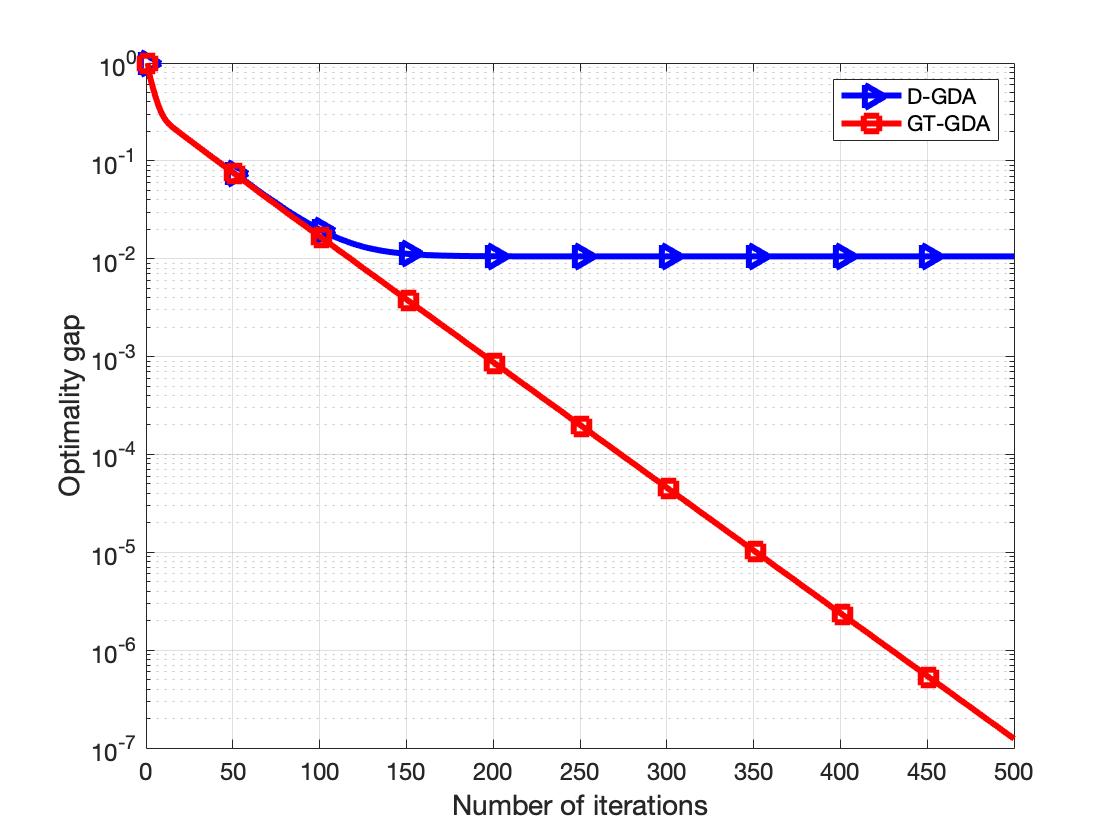}}
\caption{Performance comparison of~$\DGDA$ and~$\GDA$ over a network of~${n=32}$ nodes (left) and~${n=200}$ nodes (right).}
\label{f2}
\end{figure}

\textbf{Linear speedup:} Finally, we illustrate linear speed-up of~$\GDA$ as compared to it's centralized counterpart. We plot the ratio of the number of iterations required to attain an optimality gap of~$10^{-14}$ for~$\GDA$ as compared to the centralized gradient descent ascent method in Fig.~\ref{f3}. The results demonstrate that the performance improves linearly as the number of node increases (${n=8, 16, 32, 100, 200}$). We note that for~$n$ nodes, the centralized case has~$n$ times more data to work with at each iteration and thus has a slower convergence. In distributed setting, the processing is done in parallel which results in a faster overall performance.
\begin{figure}[!h]
\centering
\includegraphics[width=2in]{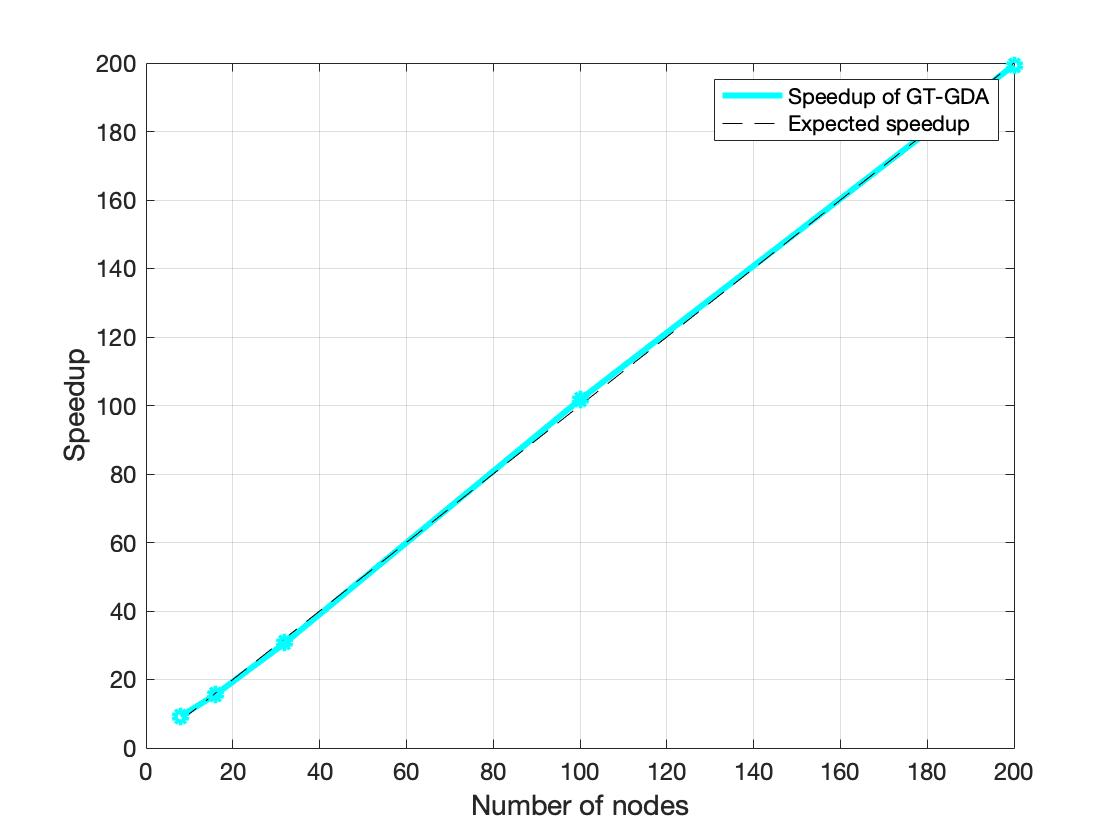}
\caption{Linear speedup: Performance ratio of~$\GDA$ with it's centralized counterpart to achieve optimality gap of~$10^{-14}$.}
\label{f3}
\end{figure}

\section{Convergence analysis}\label{conv_ana}
In this section, our aim is to establish linear convergence of the proposed algorithms to the unique saddle point (under given set of assumptions) for problem class $\P$. To this aim, we define the following error quantities with the goal of characterizing their time evolution in order to establish that the error decays to zero:
\begin{enumerate}[(i)]
    \item Agreement errors,~${\| \mb{x}^k - W_1^\infty \mb{x}^k \|}$ and~${\| \mb{y}^k - W_2^\infty \mb{y}^k \|}$: Note that we define~${W^\infty := \lim_{k \ra \infty} W^k = \tfrac{1}{n} \mb 1_n\mb 1_n^\top}$, ${W_1:=W \otimes I_{p_x}}$, ${W_2:=W \otimes I_{p_y}}$ (where~${\otimes}$ denotes the Kronecker product), and thus each error quantifies how far the network is from agreement;
    
    \item Optimality gaps,~${\| \ol{\mb{x}}^k - \mb{x}^* \|}$ and~${\| \ol{\mb{y}}^k - \mb{y}^* \|}$ or ${\| \ol{\mb{y}}^k - \nabla H^*(\ol{P} \ol{\mb{x}}^k) \|}$: Note that~${\ol{\mb{x}}^{k}:= \frac{1}{n} \sum_{i=1}^n {\mb{x}}_i^{k}}$, ${\ol{\mb{y}}^{k}:= \frac{1}{n} \sum_{i=1}^n {\mb{y}}_i^{k}}$, and each error quantifies the discrepancy between the network average and the unique saddle point $(\mb{x}^*, \mb{y}^*)$;
    
    \item Gradient tracking errors,~${\| \mb{q}^k - W_1^\infty \mb{q}^k \|^2}$ and ${\| \mb{w}^k - W_2^\infty \mb{w}^k \|^2}$: Note that these errors quantify the difference between the local and global gradients.
\end{enumerate} 
\subsection{Convergence of~$\GDA$}
The following lemma provides a relationship between the error quantities defined above with the help of an LTI system describing~$\GDA$.

\begin{lem} \label{lem1}
Consider~$\GDA$ described in Algorithm~\ref{algo} under Assumptions~\ref{sm_scc},~\ref{cp_rank}, and~\ref{doub_stoc}. We define~${\mb{u}^{k}, \mb{s}^{k} \in \R^6}$ as
\begin{align*}
\mb{u}^{k} \!&:=\!  \left[ \begin{array}{c}
    \|\mb{x}^{k} - W_1^\infty \mb{x}^{k}\| \\
    \sqrt{n} \|\ol{\mb{x}}^{k} - \mb{x}^*\| \\
    L^{-1} \|\mb{q}^{k} - W_1^\infty \mb{q}^{k}\| \\
    \|\mb{y}^{k} - W_2^\infty \mb{y}^{k}\| \\
    \sqrt{n} \|\ol{\mb{y}}^{k} - \nabla H^* (\ol{P} \ol{\mb{x}}^k)\| \\
    {L}^{-1} \|\mb{w}^{k} - W_2^\infty \mb{w}^{k}\|
  \end{array}\right], \qquad 
  \mb{s}^{k} :=  \left[ \begin{array}{c}
    \|\mb{x}^{k}\| \\
    \|\mb{y}^{k}\| \\
    0 \\
    0 \\
    0 \\
    0 
  \end{array}\right],
\end{align*}
,and let ${N_{\alpha, \beta, k} \in \R^{6 \mt 6}}$ be such that it has~$\alpha \lambda^k \tau$ and~$\beta \lambda^k \tau$ at the~$(2,1)$ and~$(1,5)$ locations, respectively, and zeros everywhere else. We note that~${\tau:=\mn{ P^{0} - W_2^\infty P^{0}}}$ where~$P^0$ concatenates~$P_i$'s initially available at each node. For all~${k \geq 0}$,~${\alpha, \beta > 0}$, and~${\alpha \leq \beta \frac{{\mu}^2}{c \sigma_M^2}}$, we have
\begin{align} \label{sys_eq}
    \mb{u}^{k+1} \leq M_{\alpha,\beta} \mb{u}^{k} +  N_{\alpha,\beta,k} \mb{s}^{k},
\end{align}
where the system matrix~$M_{\alpha,\beta}$ is defined in Appendix~\ref{appA}.
\end{lem}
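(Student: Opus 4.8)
The plan is to derive six scalar inequalities, one for each coordinate of $\mb{u}^{k+1}$, by tracking how each error quantity evolves under one iteration of $\GDA$. I would stack these into the vector inequality \eqref{sys_eq} and read off the entries of $M_{\alpha,\beta}$ and $N_{\alpha,\beta,k}$. The starting point is to rewrite Steps~2--6 of Algorithm~\ref{algo} in stacked (network) form: let $\mb{x}^k,\mb{q}^k$ collect the $\mb{x}_i^k,\mb{q}_i^k$, similarly $\mb{y}^k,\mb{w}^k$, and let $P^k$ collect the $P_i^k$. Because $W$ is doubly stochastic (Assumption~\ref{doub_stoc}), $W_1^\infty\mb{x}^{k+1}=W_1^\infty\mb{x}^k-\alpha W_1^\infty\mb{q}^k$ and, crucially, the gradient-tracking invariant $W_1^\infty\mb{q}^k=\tfrac1n\mb 1_n\mb 1_n^\top\otimes I\cdot(\text{stacked }\nabla_x f_i^k)$ holds for all $k$; the analogous statements hold on the $\mb{y}$ side. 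The $P_i^k$ consensus (Step~2) is a stand-alone linear recursion, so $\mn{P^k-W_2^\infty P^k}\le\lambda^k\mn{P^0-W_2^\infty P^0}=\lambda^k\tau$, and $W_2^\infty P^k=\ol P$ exactly — this is why the $\ol P$ that appears in the optimality-gap coordinates is the true global coupling matrix and the only residual $P_i^k$ discrepancy enters multiplied by $\lambda^k\tau$, which is exactly the structure of $N_{\alpha,\beta,k}$.

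Next I would bound each coordinate. (1) Agreement errors: apply $\|(W_1-W_1^\infty)\mb{v}\|\le\lambda\|\mb{v}-W_1^\infty\mb{v}\|$ to Step~3, giving a $\lambda$ self-term plus an $\alpha\lambda$ term from the tracking error $\|\mb{q}^k-W_1^\infty\mb{q}^k\|$; since the $\mb{x}$-update also uses the neighbors' $\mb{x}_r^k$, no cross term from the optimality gap appears here. The $\mb{y}$ agreement error is symmetric with $\beta$ in place of $\alpha$. (2) Optimality gaps: average Step~3 to get $\ol{\mb x}^{k+1}=\ol{\mb x}^k-\tfrac\alpha n\sum_i\nabla_x f_i(\mb x_i^k,\mb y_i^k)=\ol{\mb x}^k-\alpha(\nabla G(\ol{\mb x}^k)+\ol P^\top\ol{\mb y}^k)+(\text{perturbations})$; the perturbations are (i) $L$-smoothness terms controlled by the agreement errors, and (ii) the $P_i^k\neq\ol P$ mismatch, which contributes the $\alpha\lambda^k\tau\|\mb y^k\|$ term sitting at location $(2,1)$ of $N_{\alpha,\beta,k}$ — note $\|\mb y^k\|$ is precisely the first nonzero entry of $\mb s^k$. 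The key descent estimate is a standard strongly-convex/co-coercive contraction on the primal map $\mb x\mapsto\nabla\theta(\mb x)=\nabla G(\mb x)+\ol P^\top\nabla H^*(\ol P\mb x)$, using that $\theta$ is $\mu_\theta$-strongly convex (as argued in the introduction via full column rank of $\ol P$ and Definition~\ref{conj_def}) and that $\ol{\mb y}^k$ tracks $\nabla H^*(\ol P\ol{\mb x}^k)$ up to the fifth coordinate of $\mb{u}^k$; the condition $\alpha\le\beta\mu^2/(c\sigma_M^2)$ is what makes the two-timescale coupling between $\ol{\mb x}$ and $\ol{\mb y}$ close, so the $\mb y$-gap feeds into the $\mb x$-gap at location $(1,5)$ weighted by $\beta\lambda^k\tau$ and otherwise contracts. (3) Gradient-tracking errors: apply the $\lambda$-contraction to Steps~4 and~6, and bound $\|\nabla_x f_i^{k+1}-\nabla_x f_i^k\|$ by $L\|\mb x_i^{k+1}-\mb x_i^k\|+\sigma_M\|\mb y_i^{k+1}-\mb y_i^k\|$ using Assumption~\ref{sm_scc}; then express $\|\mb x^{k+1}-\mb x^k\|$ and $\|\mb y^{k+1}-\mb y^k\|$ in terms of the agreement errors, optimality gaps, and tracking errors of step $k$ (and $k+1$, which one folds back in using a small-$\alpha,\beta$ absorption). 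The normalizations $L^{-1}$ on the third and sixth coordinates and $\sqrt n$ on the second and fifth are chosen precisely so that these cross-bounds have clean, dimensionless constants.

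Assembling the six inequalities gives $\mb u^{k+1}\le M_{\alpha,\beta}\mb u^k+N_{\alpha,\beta,k}\mb s^k$ with $M_{\alpha,\beta}$ having $\lambda$'s and contraction factors on the diagonal block structure and $\mathcal O(\alpha)$ or $\mathcal O(\beta)$ off-diagonal couplings, and $N_{\alpha,\beta,k}$ exactly the claimed two-nonzero-entry matrix. The main obstacle I anticipate is item~(2): getting a genuine \emph{contraction} (spectral radius $<1$ entry on the diagonal, not just $\le1$) for the optimality-gap coordinates requires carefully combining the descent lemma for $\theta$ with the fact that $\ol{\mb y}^k$ is only an approximate $\nabla H^*(\ol P\ol{\mb x}^k)$, and making the two-timescale argument rigorous — i.e.\ choosing $c$ large enough that the $\alpha$-update of $\ol{\mb x}$ sees an essentially converged $\ol{\mb y}$ — is the delicate quantitative heart of the lemma. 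The remaining coordinates are routine consensus-contraction bookkeeping once the stacked recursions and the $P^k$ consensus estimate are in place; I would defer the explicit entries of $M_{\alpha,\beta}$ to Appendix~\ref{appA} as the statement already does.
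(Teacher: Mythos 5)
Your proposal follows essentially the same route as the paper's proof: stacked recursions, the $\lambda^k\tau$ decay of the $P^k$ consensus, the contraction of $\ol{\mb x}^k$ toward $\mb x^*$ by comparing against a gradient step on the primal map $\theta(\mb x)=G(\mb x)+H^*(\ol P\mb x)$, the measurement of the $\mb y$-gap against $\nabla H^*(\ol P\ol{\mb x}^k)$ with the stepsize-ratio condition closing the two-timescale coupling, and the smoothness-based gradient-tracking bounds. The only quibble is bookkeeping: the $\alpha\lambda^k\tau$ term multiplies $\|\mb y^k\|$, which is the \emph{second} entry of $\mb s^k$, so the nonzero entries of $N_{\alpha,\beta,k}$ produced by the appendix derivation actually sit at $(2,2)$ and $(5,1)$ rather than the stated $(2,1)$ and $(1,5)$ --- a discrepancy already present in the lemma statement itself, not a defect of your argument.
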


The proof of above lemma can be found in Appendix~\ref{p_lem1}. The main idea behind the proof is to establish bounds on error terms (elements of~${\mb{u}^k}$) for descent in~${\mb{x}}$ and ascent in~${\mb{y}}$ for a range of stepsizes~${\alpha}$ and~${\beta}$. It is noteworthy that the coupling between the ascent and descent equations gives rise to additional terms (see~${M_{\alpha, \beta}}$ in Appendix~\ref{appA}) adding to the complexity of analysis. Moreover, the analysis requires a careful manipulation of the two stepsizes unlike the existing approaches. With the help of this lemma, our goal is to establish convergence of~$\GDA$ and further characterize its convergence rate. To this aim, we first show that the spectral radius~${\rho (M_{\alpha, \beta})}$ of the system matrix is less than~$1$ in the following lemma.
\begin{lem} \label{lem2}
Consider~$\GDA$ under Assumptions~\ref{sm_scc},~\ref{cp_rank}, and~\ref{doub_stoc}. For a large enough positive constant~${c>0}$, assume the stepsizes are~${\alpha \in \left(0, \ol{\beta} \tfrac{{\mu}^2}{c \sigma_M^2} \right]}$ and~${\beta \in (0, \ol{\beta}]}$, such that 
\begin{align*}
\ol{\beta} &:= \min \left \{ \frac{\sigma_m^2 (1 - \lambda)^2}{192 \sigma_M^2 L}, \frac{L (1 - \lambda)^2}{48 \sigma_M^2},
    \frac{1}{382 \kappa L} \right \},
\end{align*}
then~${\rho (M_{\alpha, \beta}) \leq \eta < 1}$, where
\begin{align*}
    \eta &:= 1 - \mc{O} \left( \min \left \{ \frac{(1 - \lambda)^4}{c \gamma^6 \kappa^3}, \frac{L^2 \mu^2 (1 - \lambda)^4}{c \sigma_m^2 \sigma_M^2 \kappa},
    \frac{1}{c \gamma^2 \kappa^5} \right \} \right).
\end{align*}
\end{lem}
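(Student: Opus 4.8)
The plan is to bound the spectral radius $\rho(M_{\alpha,\beta})$ by exhibiting a positive vector $\boldsymbol\delta\in\R^6$ and a scalar $\eta<1$ such that $M_{\alpha,\beta}\,\boldsymbol\delta\le\eta\,\boldsymbol\delta$ entrywise; by the Perron--Frobenius theory for nonnegative matrices this immediately gives $\rho(M_{\alpha,\beta})\le\eta$. First I would inspect the structure of $M_{\alpha,\beta}$ (as defined in Appendix~\ref{appA}): it is a $6\times 6$ nonnegative matrix whose diagonal entries corresponding to the agreement and gradient-tracking rows are of the form $\lambda + \mc O(\alpha\ \text{or}\ \beta)$ — i.e.\ a contraction $\lambda<1$ coming from the doubly-stochastic mixing (Assumption~\ref{doub_stoc}), perturbed by stepsize-dependent terms — while the diagonal entries on the two optimality-gap rows are $1-\mc O(\alpha\mu^2/\sigma_M^2)$ and $1-\mc O(\beta\mu)$ type contractions coming from the (strongly) convex descent/ascent dynamics. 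The off-diagonal entries are all $\mc O(\alpha)$, $\mc O(\beta)$, or involve $\sigma_M$, $L$ and the cross-coupling between the $\mb x$ and $\mb y$ blocks.

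The key steps, in order: (1) Write down the six scalar inequalities $(M_{\alpha,\beta}\boldsymbol\delta)_j\le\eta\,\delta_j$. (2) Choose the components of $\boldsymbol\delta$ on a hierarchy of scales — the agreement/tracking components large relative to the optimality-gap components, and with the relative weights tuned to $\kappa$, $\gamma$, and $(1-\lambda)$ — so that in each agreement/tracking row the $\lambda\,\delta_j$ term is dominant and the remaining $\mc O(\alpha),\mc O(\beta)$ cross terms are absorbed into the slack $(1-\lambda)\delta_j$. This is exactly where the three-way minimum in $\ol\beta$ enters: each of the three terms $\frac{\sigma_m^2(1-\lambda)^2}{192\sigma_M^2 L}$, $\frac{L(1-\lambda)^2}{48\sigma_M^2}$, $\frac{1}{382\kappa L}$ is precisely the threshold that makes one particular row's cross terms fit inside its slack. (3) For the two optimality-gap rows, use the coupling condition $\alpha\le\ol\beta\,\mu^2/(c\sigma_M^2)$ together with large $c$ to guarantee that the contraction factor $1-\mc O(\alpha\mu^2/\sigma_M^2)$ (resp.\ $1-\mc O(\beta\mu)$) beats the incoming cross terms from the agreement/tracking variables; the factor $c$ gives the head-room to swallow those cross terms. (4) Collect the resulting bound on $\eta$: the worst (largest) of the six per-row bounds is $1-\mc O\big(\min\{(1-\lambda)^4/(c\gamma^6\kappa^3),\ L^2\mu^2(1-\lambda)^4/(c\sigma_m^2\sigma_M^2\kappa),\ 1/(c\gamma^2\kappa^5)\}\big)$, matching the three terms in $\ol\beta$ after the stepsize substitution $\alpha=\ol\alpha$, $\beta=\ol\beta$ (each $(1-\lambda)^2$ in $\ol\beta$ becomes $(1-\lambda)^4$ after squaring through the coupled rows, and the $\gamma^6,\gamma^2$ powers come from propagating $\sigma_M/\sigma_m$ ratios through the weight vector $\boldsymbol\delta$).

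The main obstacle I anticipate is step (2)--(3): constructing the weight vector $\boldsymbol\delta$ that simultaneously works for all six rows. Because the $\mb x$- and $\mb y$-dynamics are coupled through $\ol P$ (with singular values $\sigma_m,\sigma_M$), a weight that makes the descent row contract may blow up the coupling term feeding into the ascent row, and vice versa; the choice of $\boldsymbol\delta$ must balance $\sigma_M$-sized cross terms against $\mu$- and $(1-\lambda)$-sized slacks, which forces the $\gamma$ and $\kappa$ powers and the large constant $c$. An alternative to the explicit-$\boldsymbol\delta$ route would be to upper-bound $M_{\alpha,\beta}$ by a block-triangular (or $2\times2$-block) comparison matrix whose spectral radius is computable in closed form, but this typically loses constants; I would use the explicit positive-eigenvector method and treat the bookkeeping of the $\mc O(\cdot)$ constants carefully, deferring the lengthy arithmetic to the appendix.
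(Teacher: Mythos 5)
Your proposal follows essentially the same route as the paper: the authors also bound $\rho(M_{\alpha,\beta})$ by exhibiting a positive vector $\boldsymbol\delta$ with $M_{\alpha,\beta}\boldsymbol\delta\le\eta\,\boldsymbol\delta$ entrywise (via the weighted max-norm characterization for nonnegative matrices), fix the explicit contraction factor $\eta=1-\alpha\beta\sigma_m^2/\kappa$, choose the six components of $\boldsymbol\delta$ on exactly the hierarchy of $\kappa$-, $\gamma$-, and $(1-\lambda)$-dependent scales you describe, and then solve the six row inequalities for the admissible ranges of $\alpha$ and $\beta$, which yields the three-way minimum in $\ol\beta$ and the stated form of $\eta$. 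Your outline, including the role of the coupling condition $\alpha\le\ol\beta\,\mu^2/(c\sigma_M^2)$ and of the large constant $c$, matches the paper's argument.
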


\begin{proof}
Recall that~$M_{\alpha, \beta}$ is a non-negative matrix. From~\cite{hornjohnson}, we know that if there exists a positive vector~$\bds{\delta}$ and a positive constant~$\eta$ such that~${M_{\alpha, \beta} \bds{\delta} \leq \eta \bds{\delta}}$, then~${\rho(M_{\alpha, \beta}) \leq  \mn{M_{\alpha, \beta}}^{\bds{\delta}}_\infty \leq \eta}$, where~$\mn{\cdot}^{\bds{\delta}}_\infty$ is the matrix norm induced by the weighted max-norm~$\|\cdot\|^{\bds{\delta}}_\infty$, with respect to some positive vector~${\bds{\delta}}$. To this end, we first choose~${\eta := \left(1 - \alpha \beta \frac{\sigma_m^2}{{\kappa}} \right)}$, which is clearly less than~$1$. We next solve for a range of~${\alpha, \beta > 0}$ and for a positive vector~${\bds{\delta} = \left[\delta_1, \delta_2, \delta_3, \delta_4, \delta_5, \delta_6 \right]^\top}$ such that the inequalities in~${M_{\alpha, \beta} \bds{\delta} \leq \left(1 - \alpha \beta \frac{\sigma_m^2}{{\kappa}} \right) \bds{\delta}}$ hold element-wise. With~$M_{\alpha, \beta}$ in~Appendix~\ref{appA}, from the first and the fourth rows, we obtain
\begin{align}
\alpha \beta \frac{\sigma_m^2}{{\kappa}} + \alpha L \frac{\delta_3}{\delta_1} &\leq 1 - \lambda, \label{e1}\\
\alpha \beta \frac{\sigma_m^2}{{\kappa}} + \beta {L} \frac{\delta_6}{\delta_4} &\leq 1 - \lambda. \label{e4}
\end{align}
Similarly, from the second and the fifth rows, we obtain
\begin{align}
\beta {\mu} \delta_2 &\leq \delta_2 - \frac{{L}}{\sigma_m^2}  \left(L \delta_1 + \sigma_M \delta_5 \right), \label{e2}\\
\alpha \frac{\sigma_m^2}{{\kappa}} \delta_5 &\leq {\mu}\left( 1 - \frac{1}{c}\right)\delta_5 - \frac{{\mu} L}{c \sigma_M} \delta_1 \nonumber\\
&~~~ - \frac{{\mu}^2}{c \sigma_M^2} m_3 \delta_2 - \left(\frac{{\mu}}{c} + {L} \right) \delta_4. \label{e5}
\end{align}
Finally, form the third and the sixth rows, we obtain
\begin{align} \label{e3}
    &\left(\alpha \lambda L + \beta \frac{\lambda \sigma_M^2}{L}\right) \delta_1 + \left(\alpha m_1 + \beta m_2 \right) \delta_2 + \beta \lambda \sigma_M \delta_6 \nonumber\\
    &+ \left(\alpha \lambda L + \alpha \beta \frac{\sigma_m^2}{{\kappa}}\right) \delta_3 + \left(\alpha \lambda \sigma_M + \beta \lambda \sigma_M \right) \delta_4 \\
    &+ \left(\alpha \lambda \sigma_M + \beta \lambda \sigma_M \right) \delta_5 \leq \left(1 - \lambda \right) \delta_3 - \lambda \left(\delta_1 + \frac{\sigma_M}{L} \delta_4 \right), \nonumber
\end{align}
\begin{align} \label{e6}
    &\left(\alpha \lambda \sigma_M + \beta \lambda \sigma_M \right) \delta_1 + \left(\alpha m_4 + \beta m_5 \right) \delta_2 + \alpha \lambda \sigma_M \delta_3 \nonumber\\
    &+ \left(\alpha \frac{\lambda \sigma_M^2}{{L}} + \beta \lambda {L} \right) \delta_4 + \left(\alpha \frac{\lambda \sigma_M^2}{{L}} + \beta \lambda {L} \right) \delta_5 \\
    &+ \left(\beta \lambda L + \alpha \beta \frac{\sigma_m^2}{{\kappa}}\right) \delta_6 \leq \left( 1 - \lambda \right) \delta_6 - \lambda \left( \frac{\sigma_M}{{L}} \delta_1 + \delta_4 \right). \nonumber
\end{align}
We note that~\eqref{e2}--\eqref{e6} hold true for some feasible range of~$\alpha$ and~$\beta$ when their right hand sides are positive. Thus, we fix the elements of~$\bds{\delta}$ (independent of stepsizes) as
\begin{align*}
\delta_1 &= \tfrac{\sigma_M}{L}, \qquad \quad ~~\delta_2 = 4 l_2 \left[1
+ l_3 \right], \qquad \delta_3 = \tfrac{\lambda}{1 - \lambda} \tfrac{2 \sigma_M}{L},\\
\delta_4 &= \tfrac{c-1}{2(1+c {\kappa})}, \qquad \delta_5 = 2 l_3 \left[1 + 4 l_1 l_2 \left(1
+ l_3 \right) \right] + 1,\\
\delta_6 &= \tfrac{\lambda}{1 - \lambda} \sigma_M^2 \left(\tfrac{1}{L^2} + \tfrac{1}{\sigma_m^2} \right),
\end{align*}
where~${c > \frac{2 L^2}{\sigma_m^2} + \frac{2 \sigma_M^2 {\kappa}}{\sigma_m^2} + 1 }$ and 
\begin{align*}
l_1 := \tfrac{L}{\sigma_M} + \tfrac{\sigma_M}{{\mu}}, \quad l_2 := \tfrac{\sigma_M {L}}{d \sigma_m^2}, \quad l_3 := \tfrac{1}{c-1}, \quad  d = \tfrac{1}{1+2 l_1 l_2 l_3}.
\end{align*}
It can be verified that for the above choice of~$\bds{\delta}$, the right hand sides of~\eqref{e2}--\eqref{e6} are positive. Next we solve for the range of~$\alpha$ and~$\beta$. It can be verified that~\eqref{e1} and~\eqref{e4} are satisfied when
\begin{align*}
\alpha \leq \frac{ (1 - \lambda)^2}{4 \lambda \sigma_M}, \qquad \beta &\leq \frac{(1 - \lambda)^2}{4 \lambda \sigma_M^2} \left(\frac{c-1}{1 + c \kappa} \right)\left( \frac{ \sigma_m^2 L}{L^2 + \sigma_m^2} \right) \\
\text{and} \qquad \alpha \beta &\leq \frac{ {\kappa}( 1 - \lambda)}{2 \sigma_m^2}.
\end{align*}
Similarly, the relations~\eqref{e2} and~\eqref{e5} hold for
\begin{align*}
\alpha \leq \frac{{\kappa} {\mu}(c-1)}{2 c \sigma_m^2} \qquad \text{and} \qquad \beta \leq \frac{1}{4{\mu}}.
\end{align*}
Finally, it can be verified that~\eqref{e3} and~\eqref{e6} hold when
\begin{align*}
    \alpha &\leq \min \bigg \{ \frac{d \sigma_m^2}{384 L^3}, \frac{d \sigma_m^2}{384 \sigma_M^2 \kappa L}, \frac{d \mu}{384 \sigma_M^3},\\
    &~~~~\frac{1 - \lambda}{48 L} \left( \frac{1}{2 l_3 \left[1 + 4 l_1 l_2 \left(1
    + l_3 \right) \right] + 1} \right), \\ &~~~~\frac{{L}(1-\lambda)}{24 \sigma_M^2} \left( \frac{1}{2 l_3 \left[1 + 4 l_1 l_2 \left(1
    + l_3 \right) \right] + 1} \right) \bigg \}, \\
    \beta &\leq \min \bigg \{ \frac{\sigma_m^2 (1 - \lambda)^2}{192 \sigma_M^2 L}, \frac{L (1 - \lambda)^2}{48 \sigma_M^2},
    \frac{d}{382 \kappa L},\\
    &~~~~\frac{1}{24 L} \left( \frac{1}{2 l_3 \left[1 + 4 l_1 l_2 \left(1
    + l_3 \right) \right] + 1 } \right) \bigg \},
\end{align*}
and
\begin{align*}
\alpha \beta \leq  \min \bigg\{ \frac{(1 - \lambda) {\kappa}}{48 \sigma_m^2}, \frac{(1 - \lambda) {\kappa} L^2 }{48 \sigma_m^4} \bigg \}. 
\end{align*}
The lemma follows by simplifying all the~$\alpha$ and~$\beta$ bounds for some large enough~${c}$ with~${\eta = \left(1 - \alpha \beta \frac{\sigma_m^2}{{\kappa}} \right)}$. 
\end{proof}
The above lemma shows that the spectral radius of the system matrix~${M_{\alpha, \beta}}$ is less than or equal to a positive constant~${\eta<1}$ for appropriate stepsizes~$\alpha$ and~$\beta$. We emphasize that the proof of Lemma~\ref{lem2} does not follow the conventional strategies used in the literature on distributed optimization~\cite{pushsaga, ABSAGA,hornjohnson}. We need to carefully select an expression of~${\eta}$ and then find appropriate bounds on both the stepsizes ($\alpha$ and~$\beta$) to ensure convergence. Using the two lemmas above, we are now in a position to prove Theorem~\ref{th1}. It is noteworthy that~${N_{\alpha, \beta, k}}$ decays faster than~${M_{\alpha, \beta}}$ as it can be verified that~${\lambda \leq \rho (M_{\alpha, \beta})}$. We now show that~${\| \mb{u}^{k}\| \ra 0}$ and prove Theorem~\ref{th1}.





\subsubsection{Proof of Theorem~\ref{th1}} We first rewrite the LTI system dynamics described in Lemma~\ref{lem1} recursively as
\begin{align} \label{recur_eq}
    \mb{u}^{k} &\leq M_{\alpha,\beta}^k \mb{u}^{0} +  \sum_{r=0}^{k-1} M_{\alpha,\beta}^{k-r-1} N_{\alpha,\beta,k} \mb{s}^{r}.
\end{align}
We now take the norm on both sides such that for some positive constants~${\omega_1, \omega_2, \omega_3}$ and~${\omega_4}$,~\eqref{recur_eq} can be written as
\begin{align*}
    \|\mb{u}^{k}\| &\leq \|M_{\alpha,\beta}^k \mb{u}^{0}\| +  \sum_{r=0}^{k-1} \|M_{\alpha,\beta}^{k-r-1} N_{\alpha,\beta,k} \mb{s}^{r}\| \\
    &\leq \omega_1 \eta^k + \omega_2 \eta^k \sum_{r=0}^{k-1} \|\mb{s}^{r}\|,
\end{align*}
where~${\|\mb{s}^{r}\| \leq \omega_3 \| \mb{u}^k \| + \omega_4 \| \mb{x}^* \| + \omega_5 \| \mb{y}^* \|}$, with the help of some arbitrary norm equivalence constants. It can be verified that for~${a:=\omega_4 \| \mb{x}^* \| + \omega_5 \| \mb{y}^*\|}$,
\begin{align*}
    \|\mb{u}^{k}\| &\leq \left( \omega_1 + k a + \omega_2 \omega_3 \sum_{r=0}^{k-1} \|\mb{u}^{r}\| \right) \eta^k.
\end{align*}
Let~${b_k:= \sum_{r=0}^{k-1} \|\mb{u}^{r}\|}$,~${c_k:= (\omega_1 + k a) \eta^k}$ and~${d_k:= \omega_2 \omega_3 \eta^k}$. Then the above can be re-written as
\begin{align*}
    \|\mb{u}^{k}\| = b_{k+1} - b_{k}
    &\leq \left( \omega_1 + k a + \omega_2 \omega_3 b_k \right) \eta^k, \\
    \iff b_{k+1} &\leq (1 + d_k) b_k + c_k.
\end{align*}
For non-negative sequences~${\{b_k\}, \{c_k\}}$ and~${\{ d_k \}}$ related as~${b_{k+1} \leq (1 + d_k) b_k + c_k}, {\forall k}$, such that~${\sum_{k=0}^\infty c_k < \infty}$ and~${\sum_{k=0}^\infty d_k < \infty}$, we have that the sequence~${\{b_k\}}$ converges and is this bounded~\cite{polyak}. Therefore,~${\forall \nu \in (\eta, 1)}$, we have
\begin{align*}
    \lim_{k \ra \infty} \frac{\| \mb{u}^k \|}{\nu^k} \leq \lim_{k \ra \infty} \frac{\left(\omega_1 + k a + \omega_2 \omega_3 b_k \right) \eta^k }{\nu^k} = 0,
\end{align*}
and there exists a~${\psi > 0}$, such that~${\| \mb{u}^k \| \leq \psi(\eta + \xi)^k}$ for all~${k}$, where~${\xi>0}$ is an arbitrarily small constant. To achieve an~${\epsilon}$-accurate solution, we need
\begin{align*}
    \|\mb{x}^k - \mb{1}_n \otimes \mb{x}^* \| + \|\mb{y}^k - \mb{1}_n \otimes \mb{y}^* \| &\leq \epsilon, \\
    \impliedby \|\mb{u}^k\| \leq e^{-(1 - (\eta + \xi))k} \theta &\leq \epsilon,
\end{align*}
and the theorem follows.
\qed
\subsection{Convergence of~$\GDAl$}
We now establish the convergence of~$\GDAl$ under the corresponding set of assumptions. It can be verified that the LTI system for~$\GDAl$ is similar to the one described in Lemma~\ref{lem1} except that the non-zero elements in~${N_{\alpha, \beta, k}}$ are replaced by~$\alpha \tau$ at the $(2,1)$ location and~$\beta \tau$ at the~$(1,5)$ location (named as~${\wt{N}_{\alpha, \beta}}$).
In the following lemma, we consider the convergence of~${\GDAl}$ under least assumptions, i.e., when~${Pi}$'s are not necessarily identical.

\subsubsection{Proof of Theorem~\ref{th2}}
Consider~$\GDAl$ under Assumptions~\ref{sm_scc},~\ref{cp_rank}, and~\ref{doub_stoc}. Given the stepsizes~${\alpha \in (0, \ol{\alpha}]}$ and~${\beta \in (0, \ol{\beta}]}$, with~$\ol{\alpha}$ and~$\ol{\beta}$ defined in Theorem~\ref{th1}, we have
\begin{align} \label{sys_eq_2}
    \mb{u}^{k} &\leq M_{\alpha,\beta}^k \mb{u}^{0} +  \sum_{r=0}^{k-1} M_{\alpha,\beta}^{k-r-1} \wt{N}_{\alpha,\beta} \mb{s}^{k}.
\end{align}
We have already established the fact that~${\rho(M_{\alpha, \beta}) \leq \eta < 1}$ (see Lemma~\ref{lem2}). Therefore, the first term disappears exponentially, and the asymptotic response is
\begin{align} \label{sys_eqq}
    \limsup_{k \ra \infty} \mb{u}^{k} &\leq (I - M_{\alpha,\beta})^{-1} \wt{N}_{\alpha,\beta} \mb{s},
\end{align}
where~${\mb{s} := \left[ \sup_k \| \mb{x}^k \|, \sup_k \| \mb{y}^k \|, 0, 0, 0, 0 \right]^\top }$. It is noteworthy that the only two nonzero terms in the vector~${\wt{N}_{\alpha,\beta,k} \mb{s}}$ are controllable by the stepsizes~$\alpha$ or~$\beta$ and Theorem~\ref{th2} follows.
\qed

Next we provide the proof of Theorem~\ref{th3}, which assumes that each node has the same coupling matrix, and establishes convergence of~$\GDAl$.
\subsubsection{Proof of Theorem~\ref{th3}}
Consider~$\GDAl$ under Assumptions~\ref{sm_scc},~\ref{cp_rank}, and~\ref{doub_stoc}, with identical~$P_i$'s. It can be verified that for stepsizes~${\alpha \in (0, \ol{\alpha}]}$, and~${\beta \in (0, \ol{\beta}]}$, the LTI system described in~\eqref{sys_eq} reduces to
\begin{align} \label{sys_eq_3}
    \mb{u}^{k+1} &\leq M_{\alpha,\beta} \mb{u}^{k},
\end{align}
because~${\tau=0}$. The theorem thus since~${\rho(M_{\alpha, \beta})<1}$ from Lemma~\ref{lem2}.
\qed

\subsection{$\GDAl$ for quadratic problems}
We now consider~$\GDAl$ for quadratic problems where the coupling matrices are not necessarily identical at each node. We show that~$\GDAl$ converges to the unique saddle point without needing consensus. We now define the corresponding LTI system in the following lemma.



\begin{lem}[$\GDAl$ for quadratic problems] \label{gdal_quad}
Consider Problem~$\P$ under Assumptions~\ref{quad},~\ref{cp_rank}, and~\ref{doub_stoc} (with different $P_i$'s at the nodes). 
Then the LTI dynamics governing~$\GDAl$ are defined by the following
\begin{align} \label{sys_eq_quad}
    \wt{\mb{u}}^{k+1} &= \wt{M}_{\alpha,\beta} \wt{\mb{u}}^{k},
\end{align}
where
\begin{align*}
\wt{\mb{u}}^{k} \!&:=\!  \left[ \begin{array}{c}
    \mb{x}^{k} - W_1^\infty \mb{x}^{k} \\
    \ol{\mb{x}}^{k} - \mb{x}^* \\
    \mb{q}^{k} - W_1^\infty \mb{q}^{k} \\
    \mb{y}^{k} - W_2^\infty \mb{y}^{k} \\
    \ol{\mb{y}}^{k} - \mb{y}^* \\
    \mb{w}^{k} - W_2^\infty \mb{w}^{k}
  \end{array}\right],
\end{align*} 
and the system matrix~$\wt{M}_{\alpha,\beta}$ is defined in Appendix~\ref{appB}. 
\end{lem}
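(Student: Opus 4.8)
The plan is to derive the exact linear time-invariant recursion governing \GDAl\ for quadratic costs by writing out each of the six update rules in the stacked (all-nodes) form and substituting the explicit quadratic gradients~$\nabla_x f_i(\mb x,\mb y) = (Q_i+Q_i^\top)\mb x + P_i^\top\mb y + \mb q_i$ and~$\nabla_y f_i(\mb x,\mb y) = (R_i+R_i^\top)\mb y + P_i\mb x + \mb r_i$. The key observation is that for quadratics the gradient \emph{differences}~$\nabla_x\mb f_i^{k+1}-\nabla_x\mb f_i^{k}$ collapse to linear maps of~$\mb x^{k+1}-\mb x^{k}$ (and similarly in~$\mb y$), so no Taylor remainder or norm bound is needed and the dynamics are genuinely affine; the affine part is then eliminated using the fixed point (the unique saddle point~$(\mb x^*,\mb y^*)$, which exists and is unique by Assumptions~\ref{quad} and~\ref{cp_rank}).

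First I would introduce the stacked matrices~$\wh Q := \mathrm{blkdiag}(Q_i+Q_i^\top)$,~$\wh R := \mathrm{blkdiag}(R_i+R_i^\top)$, and~$\wh P := \mathrm{blkdiag}(P_i)$, and rewrite Algorithm~\ref{algo} (without Step~2) as
\begin{align*}
\mb x^{k+1} &= W_1(\mb x^{k} - \alpha\,\mb q^{k}), &
\mb q^{k+1} &= W_1\big(\mb q^{k} + \wh Q(\mb x^{k+1}-\mb x^{k}) + \wh P^\top(\mb y^{k+1}-\mb y^{k})\big),\\
\mb y^{k+1} &= W_2(\mb y^{k} + \beta\,\mb w^{k}), &
\mb w^{k+1} &= W_2\big(\mb w^{k} + \wh R(\mb y^{k+1}-\mb y^{k}) + \wh P(\mb x^{k+1}-\mb x^{k})\big).
\end{align*}
Then I would project onto the consensus direction and its orthogonal complement: define the averages~$\ol{\mb x}^k,\ol{\mb y}^k$ and the disagreement components~$\mb x^k - W_1^\infty\mb x^k$, etc., using that~$W_1 W_1^\infty = W_1^\infty W_1 = W_1^\infty$ and that~$\|(W_1 - W_1^\infty)\mb z\| \le \lambda\|\mb z - W_1^\infty\mb z\|$ — but here, crucially, we keep \emph{equalities} rather than inequalities, since the LTI claim in~\eqref{sys_eq_quad} is an exact identity, not a majorization. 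Averaging the gradient-tracking rows and using double stochasticity gives~$\ol{\mb q}^{k} = \tfrac1n\sum_i\nabla_x\mb f_i^{k}$ and~$\ol{\mb w}^{k} = \tfrac1n\sum_i\nabla_y\mb f_i^{k}$ exactly, so the average~$\ol{\mb x},\ol{\mb y}$ dynamics couple to the disagreement blocks only through~$\wh Q,\wh R,\wh P$ acting on non-consensual components. Collecting the six blocks~$(\mb x^k - W_1^\infty\mb x^k,\ \ol{\mb x}^k-\mb x^*,\ \mb q^k - W_1^\infty\mb q^k,\ \mb y^k - W_2^\infty\mb y^k,\ \ol{\mb y}^k-\mb y^*,\ \mb w^k - W_2^\infty\mb w^k)$ into~$\wt{\mb u}^k$ yields the block matrix~$\wt M_{\alpha,\beta}$ of Appendix~\ref{appB}; the~$(2,1)$ and~$(1,5)$-type cross terms that carried the~$\tau$ perturbation in Lemma~\ref{lem1} now sit inside~$\wt M_{\alpha,\beta}$ itself, reflecting that for quadratics the mismatch~$P_i-\ol P$ enters linearly and does not produce a persistent bias.

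The step I expect to be the main obstacle is bookkeeping the cross-coupling cleanly: substituting~$\mb x^{k+1}-\mb x^{k}$ and~$\mb y^{k+1}-\mb y^{k}$ (themselves affine in~$\mb x^k,\mb q^k,\mb y^k,\mb w^k$) into the~$\mb q$- and~$\mb w$-updates produces second-order-in-stepsize terms and mixes all six blocks, so one must be careful to land on a genuinely \emph{closed} $6\times6$ block system in the chosen coordinates rather than something that still references~$\mb x^{k+1}$ on the right-hand side. A secondary subtlety is verifying that~$(\mb x^*,\mb y^*)$ is indeed a fixed point of the \GDAl\ map when the~$P_i$'s differ — this uses that~$\tfrac1n\sum_i P_i = \ol P$ and~$\tfrac1n\sum_i\nabla f_i(\mb x^*,\mb y^*)=\mb 0$, which makes the affine terms cancel after the subtraction even though individual nodes see biased local data. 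Once the exact identity~\eqref{sys_eq_quad} is in place, the subsequent convergence claim of Theorem~\ref{th4} follows by showing~$\rho(\wt M_{\alpha,\beta})<1$ for small~$\alpha,\beta$ via matrix perturbation theory for semi-simple eigenvalues (perturbing around the stepsize-zero matrix whose eigenvalues are~$1$ with the appropriate multiplicity and~$\lambda$), which is handled separately in Appendix~\ref{appB}.
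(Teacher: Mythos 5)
Your proposal is correct and follows essentially the same route as the paper, which derives the exact LTI system by repeating the Lemma~\ref{lem1} decomposition (consensus/average/tracking components of the stacked updates) while exploiting the linearity of the quadratic gradients to keep equalities instead of norm bounds and absorbing the affine terms via the fixed point~$(\mb{x}^*,\mb{y}^*)$. One small slip to fix in execution: since~$f_i(\mb x,\mb y)=g_i(\mb x)+\langle\mb y,P_i\mb x\rangle-h_i(\mb y)$, the partial gradient is~$\nabla_y f_i(\mb x,\mb y)=P_i\mb x-(R_i+R_i^\top)\mb y-\mb r_i$ (note the signs), which is what produces the~$I_{p_y}-\beta\ol{R}$ block in~$\wt{M}_{22}$.
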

Lemma~\ref{gdal_quad} provides an exact analysis of $\GDAl$ for quadratic problems. The derivation of above system follows similar arguments as provided in Lemma~\ref{lem1} without the use of norm inequalities.

Our aim now is to establish linear convergence of $\GDAl$ to the exact saddle point. To proceed, we set~${\beta = \alpha}$, which makes~${\wt{M}_{\alpha, \beta}}$ as a function of~${\alpha}$ only and define~${\wt{M}_{\alpha} := \wt{M}_{\alpha, \beta=\alpha}}$ and~${\wt{M}_{0} := \wt{M}_{\alpha=0}}$ leading to
\begin{align*}
\wt{M}_{0} \!&:=\! \left[ {\begin{array}{c c c c c c}
    \ol{W}_1 &O &O &O &O &O \\
    O &I_{p_x} &O &O &O &O \\
    \mt &O &\ol{W}_1 &\mt &O &O \\
    O &O &O &\ol{W}_2 &O &O \\
    O &O &O &O &I_{p_y} &O \\
    \mt &O &O &\mt &O & \ol{W}_2
  \end{array} } \right],
\end{align*}
where~$O$ are zero matrices of appropriate dimensions and~`${\mt}$' are ``don't care terms" that do not affect further analysis. Using Schur's Lemma for determinant of block matrices, we note that, for the given structure of~${\wt{M}_{0}}$, the eigenvalues of~${\wt{M}_{0}}$ are the eigenvalues of the diagonal block matrices. Furthermore, we know that~${\rho(W-W^\infty)<1}$, which implies that~${\rho(\ol{W}_1)<1}$ and~${\rho(\ol{W}_2)<1}$. Therefore,~${\rho(\wt{M}_{0})=1}$ and~${\wt{M}_{0}}$ has~${p:=p_x+p_y}$ semi-simple eigenvalues. We would like to show that for sufficiently small positive stepsizes and~${\beta = \alpha}$, all eigenvalues of~$1$ decrease and thus the spectral radius of~${\wt{M}_{\alpha,\beta}}$ becomes less than~$1$. It is noteworthy that the analysis in the existing literature on distributed optimization~\cite{AB, pushsaga_icassp} is limited to a simple eigenvalue and thus cannot be directly extended to our case. We provide the following lemma to establish the change in the semi-simple eigenvalues with respect to~${\alpha}$.

\begin{lem} \label{semi_simple_eig}
\cite{semi_simp_eig, DSVM_Reza, kai} Consider an~${n \mt n}$ matrix~$M_\alpha$ which depends smoothly on a real parameter~${\alpha \geq 0}$. Fix
${l \in [1, n]}$ and let ${\lambda_1 = \cdots = \lambda_l}$ be a semi-simple eigenvalue of~$M_0$, with (linearly independent) right eigenvectors ${\mb{y}_1, \cdots, \mb{y}_l}$ and (linearly independent) left eigenvectors ${\mb{z}_1, \cdots, \mb{z}_l}$ such that
\begin{align*}
    \left[ {\begin{array}{c}
    \mb{z}_1\\
    \vdots\\
    \mb{z}_l
  \end{array} } \right] 
  \left[ {\begin{array}{c c c}
    \mb{y}_1 \quad \cdots \quad \mb{y}_l
  \end{array} } \right] = I_{l}.
\end{align*}

\noindent Denote by~${\lambda_i(\alpha)}$ the eigenvalues of~$M_\alpha$ corresponding to~${\lambda_i}$,~${i \in [1, l]}$, as a function of~$\alpha$. Then the derivatives~${d \lambda_i(\alpha)/d\alpha}$ exist, and ~${d \lambda_i(\alpha)/d\alpha|_{\alpha=0}}$ is given by the eigenvalues~$[\lambda_S]_i$ of the following~${l \mt l}$ matrix
\begin{align*}
    S := \left[ {\begin{array}{c c c}
    \mb{z}_1^\top M' \mb{y}_1 &\cdots & \mb{z}_1^\top M' \mb{y}_l \\
    \vdots &\ddots & \vdots \\
    \mb{z}_l^\top M' \mb{y}_1 &\cdots & \mb{z}_l^\top M' \mb{y}_l 
  \end{array} } \right],
\end{align*}
where~${M':= d M_\alpha/d\alpha|_{\alpha=0}}$. Furthermore, the eigenvalue ${\lambda_i(\alpha)}$ under perturbation of the parameter~${\alpha}$ is given by
\begin{align*} 
    \lambda_i(\alpha) &= \lambda_i(0) + \alpha [\lambda_{S}]_i + o (\alpha), \qquad \forall i \in [1, l].
\end{align*}
\end{lem}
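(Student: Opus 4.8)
The plan is to reduce the problem to the invariant subspace attached to the repeated eigenvalue and then read off the first-order term, following the classical smooth/analytic perturbation argument (Kato) that underlies the cited references~\cite{semi_simp_eig, DSVM_Reza, kai}. Write $\lambda_0 := \lambda_1 = \cdots = \lambda_l$. Since $\lambda_0$ is \emph{semi-simple}, its eigenspace has dimension exactly $l$ and coincides with $\mathrm{span}\{\mb{y}_1,\dots,\mb{y}_l\}$, and $\lambda_0$ is isolated from the rest of the spectrum of $M_0$. I would choose a small circle $\Gamma\subset\mbb{C}$ enclosing $\lambda_0$ and no other eigenvalue of $M_0$, and set $P(\alpha):=\tfrac{1}{2\pi i}\oint_\Gamma (zI_n - M_\alpha)^{-1}\,dz$. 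For $\alpha$ near $0$ the resolvent is bounded on $\Gamma$, so $P(\alpha)$ inherits the smoothness of $M_\alpha$ and has constant rank $l$; consequently one can pick a smooth basis of $\mathrm{Range}\,P(\alpha)$ reducing to $\{\mb{y}_i\}$ at $\alpha=0$, in which the $l$ eigenvalues of $M_\alpha$ lying inside $\Gamma$ are precisely the eigenvalues of the $l\times l$ reduced matrix $\widehat{M}(\alpha):=P(\alpha)M_\alpha P(\alpha)$, with $\widehat{M}(0)=\lambda_0 I_l$. This is what makes the branches $\lambda_i(\alpha)$ well-defined and supplies the differentiability asserted in the lemma.

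For the first-order term I would differentiate the eigen-relation directly. Let $\mb{y}_i(\alpha)$ be a smooth eigenvector branch, $M_\alpha\mb{y}_i(\alpha)=\lambda_i(\alpha)\mb{y}_i(\alpha)$, with limit $\mb{y}_i(0)=\sum_{j=1}^l c_{ji}\,\mb{y}_j=:Y\mb{c}_i$ in the eigenspace. Differentiating at $\alpha=0$ gives
\begin{align*}
M'\,\mb{y}_i(0) + M_0\,\mb{y}_i'(0) = \lambda_i'(0)\,\mb{y}_i(0) + \lambda_0\,\mb{y}_i'(0).
\end{align*}
Left-multiplying by $\mb{z}_k^\top$ and using the left-eigenvector identity $\mb{z}_k^\top M_0 = \lambda_0\,\mb{z}_k^\top$ cancels the two terms carrying the unknown $\mb{y}_i'(0)$, leaving $\mb{z}_k^\top M'\,\mb{y}_i(0) = \lambda_i'(0)\,\mb{z}_k^\top\mb{y}_i(0)$. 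By the normalization $ZY=I_l$ we have $\mb{z}_k^\top\mb{y}_j=\delta_{kj}$, so the right side is $\lambda_i'(0)\,c_{ki}$ and the left side is $\sum_j(\mb{z}_k^\top M'\mb{y}_j)c_{ji}=(S\mb{c}_i)_k$ with $S=ZM'Y$. Hence $S\mb{c}_i=\lambda_i'(0)\,\mb{c}_i$, i.e.\ each derivative $\lambda_i'(0)$ is an eigenvalue of $S$. Since the branches can be chosen so that $\mb{y}_1(0),\dots,\mb{y}_l(0)$ form a basis of the (full) eigenspace, the $\mb{c}_i$ are linearly independent, and letting $i$ run over $[1,l]$ recovers all eigenvalues $[\lambda_S]_i$ of $S$ with multiplicity; the expansion $\lambda_i(\alpha)=\lambda_i(0)+\alpha[\lambda_S]_i+o(\alpha)$ is then Taylor's formula applied to the branch.

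The main obstacle is the bookkeeping in the first step: matching the $l$ perturbed eigenvalues to differentiable branches that correspond one-to-one with the eigenvalues of $S$. This is genuinely delicate when $S$ is not diagonalizable or has coincident eigenvalues, where the branches need not be analytic; the safe route is to work entirely with the reduced matrix $\widehat{M}(\alpha)=\lambda_0 I_l+\alpha S+o(\alpha)$ and invoke continuity of the roots of its characteristic polynomial — exactly the content extracted from~\cite{semi_simp_eig, DSVM_Reza, kai}. Once the branches and their derivatives are granted, the remainder (Step~2) is the short computation above, using only the left/right eigenvector relations for $\lambda_0$ and the biorthogonality normalization $ZY=I_l$.
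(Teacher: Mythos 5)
Your proposal is correct: the Riesz-projection reduction to $\widehat{M}(\alpha)=\lambda_0 I_l+\alpha S+o(\alpha)$ followed by the biorthogonality computation $\mb{z}_k^\top M'\mb{y}_i(0)=\lambda_i'(0)\,\mb{z}_k^\top\mb{y}_i(0)$ is exactly the classical argument behind this result, and you correctly flag that the only delicate point (non-diagonalizable $S$ or coincident eigenvalues of $S$) is handled by working with the reduced matrix and continuity of polynomial roots rather than with smooth eigenvector branches. The paper itself gives no proof of this lemma --- it imports it verbatim from the cited references (pointing to Theorem 2.7 therein) --- so your write-up simply reconstructs the standard proof that those references contain, and it does so without error.
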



With the help of Lemma~\ref{semi_simple_eig}, we now prove Theorem~\ref{th4}.
\subsubsection{Proof of Theorem 4}

We note that~${\wt{M}_\alpha:= \wt{M}_0 + \alpha M'}$, where~${M':= d \wt{M}_{\alpha}/d \alpha |_{\alpha=0}}$. Furthermore, the left eigenvectors corresponding to~$p$ semi-simple eigenvalues are the rows of the matrix~$L$ and the right eigenvectors are the columns of the matrix~$R$, as defined below.
\begin{align*}
    L := \left[ {\begin{array}{c c c c c c}
    O &I_{p_x} &O &O &O &O\\
    O &O &O &O &I_{p_y} &O\\
  \end{array} } \right] \quad \mbox{and} \quad
  R := L^\top.
\end{align*}
We would like to ensure that the semi-simple eigenvalues of~${\wt{M}_0}$ are forced to move inside the unit circle as~${\alpha}$ increases. Using Lemma~\ref{semi_simple_eig}, it can be established that the derivatives~${d \lambda_i(\alpha)/d\alpha|_{\alpha=0}}$ exist for all~${i \in [1, p]}$ and are the eigenvalues~$[\lambda_S]_i$ of
\begin{align*}
    S := \left[ {\begin{array}{c c}
    -\ol{Q} & -\ol{P}^\top \\
    \ol{P} & -\ol{R}
  \end{array} } \right].
\end{align*}
Next we define~${a_i := \mbox{Re} \left( [\lambda_S]_i \right)}$ and~${b_i := \mbox{Im} \left( [\lambda_S]_i \right)}$, ${\forall i \in [1, p]}$. 
From Theorem 3.6 in~\cite{golub}, we know that~$-S$ is positive stable if~${(\ol{Q} + \ol{Q}^\top)}$ is positive definite and~${(\ol{R} + \ol{R}^\top)}$ is positive semi-definite, i.e.,~$\forall i \in [1, p], a_i < 0$.
Furthermore, we know from Lemma~\ref{semi_simple_eig} that
\begin{align} \label{eig}
    \lambda_i(\alpha) &= \lambda_i(0) + \alpha [\lambda_{S}]_i + o (\alpha),
\end{align}
see Theorem 2.7 in~\cite{semi_simp_eig} for details. For sufficiently small stepsize~${\alpha > 0}$, the term~${o (\alpha)}$ can be made arbitrarily small as it contains higher order~${\alpha}$ terms. Therefore, we can re-write~\eqref{eig} as
\begin{align*}
    \lambda_i(\alpha) &= 1 + \alpha [\lambda_{S}]_i = 1 + \alpha (a_i + j b_i), \qquad \forall i \in [1, p].
\end{align*}
Since the real parts of~${[\lambda_S]_i}$ are~${a_i < 0}$, the semi-simple eigenvalues would move towards the direction of the secants of the unit circle for any~${b_i}$, see Fig.~\ref{circ}. We thus obtain that~${\rho (\wt{M}_{\alpha}) < 1 }$ for sufficiently small stepsize~${\alpha > 0}$ and Theorem~\ref{th4} follows. $\hfill \qed$
\begin{figure}[!ht]
\centering
\includegraphics[width=2.5in]{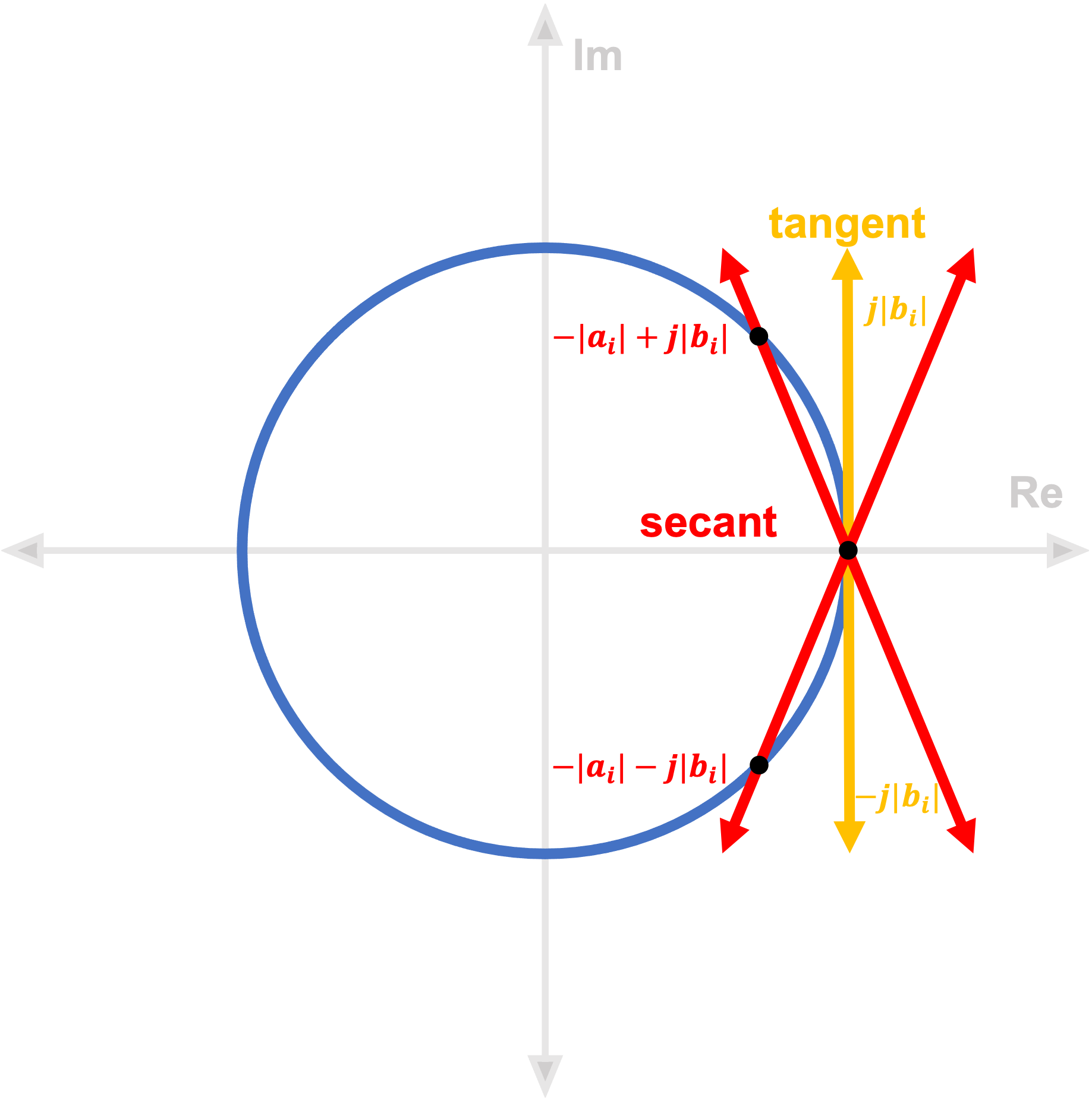}
\caption{Tangent lines intersect a circle at a single point. Secant lines intersect a circle at two points.}
\label{circ}
\end{figure}

\section{Conclusion} \label{conc}
In this paper, we describe first-order methods to solve distributed saddle point problems of the form: ${\min_{\mb{x}} \max_{\mb{y}} \left\{ G(\mb x) + \mb \langle \mb y, \ol{P} \mb x \rangle - H(\mb y) \right\}}$, which has many practical applications as described earlier in the paper. In particular, we assume that the underlying data is distributed over a strongly connected network of nodes such that~${G(\mb{x}):= \tfrac{1}{n} \sum_{i=1}^n g_i(\mb{x})}$, ${H(\mb{y}):=\tfrac{1}{n} \sum_{i=1}^n h_i(\mb{y})}$, and~${\ol{P}:= \tfrac{1}{n} \sum_{i=1}^n P_i}$, where the constituent functions~$g_i$,~$h_i$, and local coupling matrices~$P_i$ are private to each node~$i$. 
Under appropriate assumptions, we show that~$\GDA$ converges linearly to the unique saddle point of strongly concave-convex problems. We further provide explicit $\epsilon$-complexities of the underlying algorithms and characterize a regime in which the convergence is network-independent. To reduce the communication complexity of~$\GDA$, we propose a lighter (communication-efficient) version~$\GDAl$ that does not require consensus on local~$P_i$'s and analyze~$\GDAl$ in various relevant scenarios. Finally, we illustrate the convergence properties of~$\GDA$ through numerical experiments. 

\bibliography{sample,proc}
\bibliographystyle{IEEEbib}

\appendix
\subsection{Proof of Lemma 1}\label{p_lem1}
For convenience, we restate some notation. The vector-matrix form of~$\GDA$,~where we define global vectors~${\mb x^k,\mb q^k, \nabla_x \mb f^k}$, all in~$\mbb R^{n p_x}$, and~${\mb y^k,\mb w^k, \nabla_y \mb f^k}$, all in~$\mbb R^{n p_y}$, i.e.,
\begin{align*}
\mb{x}^k &:= \left[ {\begin{array}{c}
    \mb{x}^{k}_1\\
    \vdots\\
    \mb{x}^{k}_n
  \end{array} } \right], \quad \mb{q}^k := \left[ {\begin{array}{c}
    \mb{q}^{k}_1\\
    \vdots\\
    \mb{q}^{k}_n
  \end{array} } \right], \quad \nabla_x \mb f^k := \left[ {\begin{array}{c}
    \nabla_x \mb f^k_1\\
    \vdots\\
    \nabla_x \mb f^k_n
  \end{array} } \right],
\end{align*}
\begin{align*}
  \mb{y}^k &:= \left[ {\begin{array}{c}
    \mb{y}^{k}_1\\
    \vdots\\
    \mb{y}^{k}_n
  \end{array} } \right], \quad \mb{w}^k := \left[ {\begin{array}{c}
    \mb{w}^{k}_1\\
    \vdots\\
    \mb{w}^{k}_n,
  \end{array} } \right], \quad \nabla_y \mb f^k := \left[ {\begin{array}{c}
    \nabla_y \mb f^k_1\\
    \vdots\\
    \nabla_y \mb f^k_n
  \end{array} } \right],
\end{align*}
and the global matrices as~${W_1:=W \otimes I_{p_x} \in \R^{np_x \times np_x}}$, ${W_2:=W \otimes I_{p_y} \in \R^{np_y \times np_y}}$ and
\begin{align*}
P^k &:= \left[ {\begin{array}{c}
    P^{k}_1\\
    \vdots\\
    P^{k}_n
  \end{array} } \right].
\end{align*}
$\GDA$ in Algorithm~\ref{algo} can be equivalently written as
\begin{align*}
{P}^{k+1} &= W_2 P^{k},\\
\mb{x}^{k+1} &= W_1 (\mb{x}^{k} - \alpha  \cdot \mb{q}^{k}),\\
\mb{y}^{k+1} &= W_2 (\mb{y}^{k} + \beta  \cdot \mb{w}^{k}),\\
\mb{q}^{k+1} &= W_1 (\mb{q}^{k} + \nabla_x \mb f^{k+1} - \nabla_x \mb f^{k}),\\
\mb{w}^{k+1} &= W_2 (\mb{w}^{k} + \nabla_y \mb f^{k+1} - \nabla_y \mb f^{k}).
\end{align*}
To aid the analysis, we re-define three error quantities:
\begin{enumerate}[(i)]
    \item Agreement errors~${\| \mb{x}^k - W_1^\infty \mb{x}^k \|}$ and~${\| \mb{y}^k - W_2^\infty \mb{y}^k \|}$;
    \item Optimality gaps~${\| \ol{\mb{x}}^k - \mb{x}^* \|}$ and~${\| \ol{\mb{y}}^k - \nabla H^*(\ol{P} \ol{\mb{x}}^k) \|}$;
    \item Gradient tracking errors~${\| \mb{q}^k - W_1^\infty \mb{q}^k \|^2}$ and ${\| \mb{w}^k - W_2^\infty \mb{w}^k \|^2}$.
\end{enumerate} 
Next we calculate error bounds on these quantities to establish system dynamics for $\GDA$.
\subsubsection{Agreement errors}
The network agreement error for $\mb{x}$ and $\mb{y}$ updates can be quantified as
\begin{align*}
    &~~~~\|\mb{x}^{k+1} - W_1^\infty \mb{x}^{k+1}\| \\
    &= \|W_1 (\mb{x}^{k} - \alpha  \cdot \mb{q}^{k}) - W_1^\infty (W_1 (\mb{x}^{k} - \alpha  \cdot \mb{q}^{k}))\| \\
    &= \|W_1(\mb{x}^{k} - W_1^\infty \mb{x}^{k}) - \alpha W_1 (\mb{q}^{k} - W_1^\infty \mb{q}^{k})\| \\
    &\leq \lambda \|\mb{x}^{k} - W_1^\infty \mb{x}^{k}\| + \alpha  \|\mb{q}^{k} - W_1^\infty \mb{q}^{k}\|,
\end{align*}
where we use that~${W_1^\infty W_1 = W_1^\infty}$ in the second step. Moreover, we use~${\|W_1 \mb x^k - W_1^\infty \mb x^k \| \leq \lambda \|\mb x^k - W^\infty \mb x^k \|}$, such that~${\lambda := \|W - W^\infty\|< 1}$, and triangular inequality in the third step. Similarly, we have
\begin{align*}
    \|\mb{y}^{k+1} - W_2^\infty \mb{y}^{k+1}\| &\leq \lambda\|\mb{y}^{k} - W_2^\infty \mb{y}^{k}\| + \beta \|\mb{w}^{k} - W_2^\infty \mb{w}^{k}\|.
\end{align*}
\subsubsection{Optimality gaps}
We first consider the bound on $\|\ol{\mb{x}}^{k+1} - \mb{x}^*\|$. It can be verified that for~${\mb{x}^+:=\ol{\mb{x}}^{k} - \alpha \nabla \theta (\ol{\mb{x}}^{k})}$, 
\begin{align*}
    \|\ol{\mb{x}}^{k+1} - \mb{x}^*\| &\leq \|\ol{\mb{x}}^{k+1} - \mb{x}^{+}\| + \|\mb{x}^{+} - \mb{x}^*\| \\
    &\leq \|\ol{\mb{x}}^{k+1} - \mb{x}^{+}\| + \left(1-\alpha \frac{\sigma^2_m}{L_2} \right)\|\ol{\mb{x}}^{k} - \mb{x}^*\|,
\end{align*}
where~$\mb{x}^+$ is a gradient descent step for the primal problem for global objective~${\theta(\mb{x}):= R(\mb{x}) + G(\mb{x})}$, such that~${R(\mb{x}):= H^*(\ol{P} \mb{x})}$, and we used the~${\tfrac{\sigma_m^2}{L_2}}$-strong convexity of $\theta$, see \cite{du} for details. Next we find an upper bound on the first term as follows
\begin{align*}
    &\|\ol{\mb{x}}^{k+1} - \mb{x}^{+}\|
    \leq \alpha \|\nabla \theta (\ol{\mb{x}}^{k}) - \ol{\nabla_x \mb{f}}^k\| \\ 
    &\leq \alpha \frac{L_1}{\sqrt{n}} \|\mb{x}^k - W_1^\infty \mb{x}^k\| + \alpha \| \frac{\sum_{i=1}^n (P_i^k)^\top \mb{y}_i}{n}  - \ol{P}^\top \nabla H^* (\ol{P}\ol{\mb{x}}^k)\| \\
    &\leq \alpha \frac{L_1}{\sqrt{n}} \|\mb{x}^k - W_1^\infty {\mb{x}}^k\| + \alpha \sigma_M \| \ol{\mb{y}}^k  - \nabla H^* (\ol{P}\ol{\mb{x}}^k)\| \\
    &~~~+ \alpha \frac{1}{\sqrt{n}} \lambda^k \tau \| \mb{y}^k\|,
\end{align*}
where for~${\tau:= \mn{P^0 - \mb{1}_n \otimes \ol{P}}}$ we used
\begin{align*}
    &\| \frac{\sum_{i=1}^n (P_i^k)^\top \mb{y}_i}{n}  - \ol{P}^\top \nabla H^* (\ol{P}\ol{\mb{x}}^k)\| \\
    &\leq \| \frac{\sum_{i=1}^n \left[ (P_i^k)^\top - \ol{P}^{\top} \right] \mb{y}_i^k }{n}\| + \|\ol{P}^\top \ol{\mb{y}}^k - \ol{P}^\top \nabla H^*(\ol{P} \ol{\mb{x}}^k) \| \\
    &\leq \frac{1}{\sqrt{n}} \lambda^k \mn{(P^0)^\top - \mb{1}_n \otimes \ol{P}^{\top}} \| \mb{y}^k \| + \sigma_M \|\ol{\mb{y}}^k - \nabla H^*(\ol{P} \ol{\mb{x}}^k) \|.
\end{align*}
Plugging this in the above expression leads to
\begin{align*}
    \|\ol{\mb{x}}^{k+1} - \mb{x}^*\| &\leq \alpha \frac{L_1}{\sqrt{n}} \|\mb{x}^k - W_1^\infty {\mb{x}}^k\| + \alpha \sigma_M \|\ol{\mb{y}}^{k} - \nabla H^* (\ol{P} \ol{\mb{x}}^{k})\| \\
    &~~~+ \left(1-\alpha \frac{\sigma^2_m}{L_2} \right)\|\ol{\mb{x}}^{k} - \mb{x}^*\| + \alpha \frac{1}{\sqrt{n}} \lambda^k \tau \|\mb{y}^k \|.
\end{align*}
Similarly we would like to evaluate the upper bound on~${\|\ol{\mb{y}}^{k+1} - \nabla H^* (\ol{P} \ol{\mb{x}}^{k+1})\|}$ but first it is significant to note that for~${\mb{y}^* :=\nabla H^* (\ol{P} \mb{x}^*)}$ we have
\begin{align*}
    \|\ol{\mb{y}}^{k} - \mb{y}^*\| &= \|\ol{\mb{y}}^{k} - \nabla H^* (\ol{P} \mb{x}^*)\| \\
    &\leq \|\ol{\mb{y}}^{k} - \nabla H^* (\ol{P} \ol{\mb{x}}^k)\| + \frac{\sigma_M}{{\mu}} \|\ol{\mb{x}}^k - \mb{x}^*\|,
\end{align*}
where we used~${\mn{P} \leq \sigma_M}$ and~${1/\mu}$-strong convexity of $H^*$. Therefore, it is sufficient to evaluate the upper bound on~${\|\ol{\mb{y}}^{k+1} - \nabla H^* ({P} \ol{\mb{x}}^{k+1})\|}$ to establish the optimality gap. Thus, we expand
\begin{align*}
    &\|\ol{\mb{y}}^{k+1} - \nabla H^* (\ol{P} \ol{\mb{x}}^{k+1})\| \\
    &= \|\ol{\mb{y}}^{k+1} - \nabla H^* (\ol{P} \ol{\mb{x}}^{k}) + \nabla H^* (\ol{P} \ol{\mb{x}}^{k}) - \nabla H^* (\ol{P} \ol{\mb{x}}^{k+1})\| \\
    &\leq \|\ol{\mb{y}}^{k} - \beta \nabla \wh{H}(\ol{\mb{y}}^k) - \nabla H^* (\ol{P} \ol{\mb{x}}^{k})\| + \beta \| \nabla \wh{H}(\ol{\mb{y}}^k) + \ol{\nabla_y \mb{f}}\| \\
    &~~~+ \|\nabla H^* (\ol{P} \ol{\mb{x}}^{k}) - \nabla H^* (\ol{P} \ol{\mb{x}}^{k+1})\| \\
    &\leq (1-\beta {\mu})\|\ol{\mb{y}}^{k} - \nabla H^* (\ol{P} \ol{\mb{x}}^{k})\| + \beta \frac{L_2}{\sqrt{n}} \| \mb{y}^k - W_2^\infty \mb{y}^k\| \\
    &~~~+ \beta \frac{1}{\sqrt{n}} \lambda^k \tau \|\mb{x}^k \| + \frac{\sigma_M}{{\mu}} \| \ol{\mb{x}}^{k} - \ol{\mb{x}}^{k+1}\|,
\end{align*}
where~${\ol{\mb{y}}^{k+1} = \ol{\mb{y}}^{k} - \beta \left(\nabla H(\ol{\mb{y}}^k) - \ol{P} \ol{\mb{x}}^k\right)}$ is the gradient descent step for ${\mu}$-strongly convex and ${L_2}$-smooth objective function defined as ${\wh{H}({\mb{y}}) := H({\mb{y}}) - \langle{\mb{y}}, \ol{P} \ol{\mb{x}}^k \rangle}$. By optimality condition we have the gradient equal to zero, the minimizer satisfies $\wh{\mb{y}} = \nabla H^*(\ol{P} \ol{\mb{x}}^k)$ and the first term follows. The last term can be further expanded as:
\begin{align*}
\|\ol{\mb{x}}^{k} - \ol{\mb{x}}^{k+1}\| &\leq \alpha \| \ol{\nabla_x \mb{f}}^k - \nabla_x F(\ol{\mb{x}}^k, \ol{\mb{y}}^k) \| \\
&~~~+ \alpha \| \nabla_x F(\ol{\mb{x}}^k, \ol{\mb{y}}^k) - \nabla_x F({\mb{x}}^*, {\mb{y}}^*) \| \\
&\leq \alpha \left( \frac{L}{\sqrt{n}} \|\mb{x}^k - W_1^\infty \mb{x}^k \| + \frac{\sigma_M}{\sqrt{n}} \|\mb{y}^k - W_2^\infty \mb{y}^k\| \right) \\
&~~~+ \alpha \left( L \|\ol{\mb{x}}^{k} - \mb{x}^*\| + \sigma_M \|\ol{\mb{y}}^{k} - \mb{y}^*\| \right).
\end{align*}
Let~${\alpha \leq \beta \frac{{\mu}^2}{c \sigma_M^2}}$, where~${c>1}$. Using the above, we get the final expression for the error bound on~${\|\ol{\mb{y}}^{k+1} - \nabla H^* ({P} \ol{\mb{x}}^{k+1})\|}$:
\begin{align*}
    &\|\ol{\mb{y}}^{k+1} - \nabla H^* (\ol{P} \ol{\mb{x}}^{k+1})\| \\
    &\leq \left[1-\beta {\mu}\left(1 - \frac{1}{c} \right)  \right]\|\ol{\mb{y}}^{k} - \nabla H^* (\ol{P} \ol{\mb{x}}^{k})\| \\
    &~~~+ \frac{\beta}{c} \left(\frac{L_1 {\mu}}{\sigma_M} + \sigma_M\right) \|\ol{\mb{x}}^{k} - \mb{x}^*\| + \alpha \frac{\sigma_M L_1}{{\mu} \sqrt{n}} \| \mb{x}^k - {W}_1^\infty \mb{x}^k\| \\
    &~~~+ \left(\beta \frac{{L_2}}{\sqrt{n}} + \alpha \frac{\sigma_M^2}{{\mu}\sqrt{n}}\right) \|\mb{y}^k - {W}_2^\infty \mb{y}^k\| + \beta \frac{1}{\sqrt{n}} \lambda^k \tau \| \mb{x}^k \|.
\end{align*}
\subsubsection{Gradient tracking}
Finally we evaluate the upper bounds on gradient tracking errors for descent and ascent updates
\begin{align*}
    &\|\mb{q}^{k+1} - W_1^\infty \mb{q}^{k+1}\| \\
    &= \|W_1(\mb{q}^{k} - W_1^\infty \mb{q}^{k}) + W_1(I - W_1^\infty)( \nabla_x \mb{f}^{k+1} - \nabla_x \mb{f}^{k})\| \\
    &\leq \lambda \|\mb{q}^{k} - W_1^\infty \mb{q}^{k}\| + \lambda \|\nabla_x \mb{f}^{k+1} - \nabla_x \mb{f}^{k}\|,
\end{align*}
where we used the triangular inequality and the facts that~${\mn{W_1} = 1}$ and~${\mn{W_1 - W^\infty_1} = \lambda}$. Next we expand on the second term and it can be verified that:
\begin{align*}
&\| \nabla_x \mb{f}^{k+1} - \nabla_x \mb{f}^{k} \| \\
&= \sqrt{\sum_{i=1}^n \| \nabla_x g_i (\mb{x}_i^{k+1}) - \nabla_x g_i (\mb{x}_i^{k}) + P_i^\top (\mb{y}_i^{k+1} - \mb{y}_i^k)\|^2} \\
&\leq \sqrt{\sum_{i=1}^n L^2 \|\mb{x}_i^{k+1} - \mb{x}_i^k\|^2} + \sqrt{\sum_{i=1}^n \sigma_M^2 \|\mb{y}_i^{k+1} - \mb{y}_i^k\|^2}\\
&\leq \left(L + \alpha L^2 + \beta \sigma_M^2 \right) \|\mb{x}^{k} - W^\infty_1 \mb{x}^{k}\| + \alpha L \|\mb{q}^k - W^\infty_1 \mb{q}^k\| \\
&~~~+ (\alpha L^2 + \beta \sigma_M^2) \sqrt{n} \|\ol{\mb{x}}^{k} - \mb{x}^*\| + \beta \sigma_M \|\mb{w}^k - W^\infty_2 \mb{w}^k\| \\
&~~~+ (\alpha \sigma_M L + \beta \sigma_M L_2)\sqrt{n} \|\ol{\mb{y}}^{k} - \mb{y}^*\| \\
&~~~+ (\sigma_M + \beta \sigma_M L_2 + \alpha \sigma_M L) \|\mb{y}^{k} - W^\infty_2 \mb{y}^{k}\|
\end{align*}
where we used 
\begin{align*}
\| \mb{x}^{k} - \mb{x}^{k+1}\| &\leq \| \mb{x}^{k} - W^\infty_1 \mb{x}^{k} \| + \alpha \|\mb{q}^k \|,
\end{align*}
and
\begin{align*}
\|\mb{q}^k\| &\leq \|\mb{q}^k - W^\infty_1 \mb{q}^k\| + \sqrt{n} \|\ol{\nabla_x \mb{f}}^k - \nabla_x F(\ol{\mb{x}}^k, \ol{\mb{y}}^k)\| \\
&~~~+ \sqrt{n} \|\nabla_x F(\ol{\mb{x}}^k, \ol{\mb{y}}^k) - \nabla_x F(\mb{x}^*, \mb{y}^*) \| \\
&\leq \|\mb{q}^k - W^\infty_1 \mb{q}^k\| + L \|\mb{x}^k - W^\infty_1 \mb{x}^k \| \\
&~~~+ \sigma_M \|\mb{y}^k - W^\infty_2 \mb{y}^k\| + L \sqrt{n} \|\ol{\mb{x}}^{k} - \mb{x}^*\| \\
&~~~+ \sigma_M \sqrt{n} \|\ol{\mb{y}}^{k} - \mb{y}^*\|.
\end{align*}
Using the above bounds, we get the final expression for~${L^{-1}\|\mb{q}^{k+1} - W_\infty \mb{q}^{k+1}\|}$ as follows:
\begin{align*}
&L^{-1} \|\mb{q}^{k+1} - W_\infty \mb{q}^{k+1}\| \\
&= \lambda \left(1 + \alpha L + \beta \frac{\sigma_M^2}{L} \right) \|\mb{x}^{k} - W^\infty_1 \mb{x}^{k}\| \\
&~~~+ \lambda (1 + \alpha L) L^{-1} \|\mb{q}^k - W^\infty_1 \mb{q}^k\| \\
&~~~+ \lambda\left(\alpha \left(L + \frac{\sigma_M^2}{{\mu}}\right) + \beta \frac{\sigma_M^2}{L} \left(1 + \kappa \right)\right) \sqrt{n} \|\ol{\mb{x}}^{k} - \mb{x}^*\| \\
&~~~+ \lambda \left(\alpha \sigma_M + \beta \sigma_M \right)\sqrt{n} \|\ol{\mb{y}}^{k} - \nabla H^* (\ol P \ol{\mb{x}}^k)\| \\
&~~~+ \lambda \left(\frac{\sigma_M}{L} + \beta \sigma_M + \alpha \sigma_M \right) \|\mb{y}^{k} - W^\infty_2 \mb{y}^{k}\| \\
&~~~+ \lambda \beta \sigma_M L^{-1}\|\mb{w}^k - W^\infty_2 \mb{w}^k\|.
\end{align*}
Using similar arguments, it can be verified that the gradient tracking error~${L}^{-1} \|\mb{w}^{k+1} - W^\infty_2 \mb{w}^{k+1}\|$ can be expressed as follows:
\begin{align*}
&{L}^{-1} \|\mb{w}^{k+1} - W^\infty_2 \mb{w}^{k+1}\| \\
&\leq \lambda \left(1 + \beta {L} + \alpha \frac{\sigma_M^2}{{L}}\right)\|\mb{y}^{k} - W^\infty_2 \mb{y}^{k}\| \\
&~~~+ \lambda \left(1 + \beta {L}\right) {L}^{-1}\|\mb{w}^k - W^\infty_2 \mb{w}^k\| \\
&~~~+ \lambda \left[\beta \sigma_M \left(1 + \kappa \right) + \alpha \frac{\sigma_M}{{L}} \left(L + \frac{\sigma_M^2}{{\mu}}\right)\right] \sqrt{n} \|\ol{\mb{x}}^{k} - \mb{x}^*\| \\
&~~~+ \lambda \left(\beta {L} + \alpha \frac{\sigma_M^2}{{L}} \right) \sqrt{n} \|\ol{\mb{y}}^{k} - \nabla H^* (\ol{P} \ol{\mb{x}}^k)\| \\
&~~~+ \lambda \left(\frac{\sigma_M}{{L}} + \alpha \sigma_M + \beta \sigma_M \right) \|\mb{x}^{k} - W^\infty_1 \mb{x}^{k}\| \\
&~~~+ \lambda \alpha \sigma_M {L}^{-1}\|\mb{q}^k - W^\infty_1 \mb{q}^k\|,
\end{align*}
and Lemma~\ref{lem1} follows.

\subsection{System matrix for~$\GDA$}\label{appA}
To completely describe Lemma~\ref{lem1}, we define the system matrix as follows
\begin{align*}
  M_{\alpha,\beta} &:= \left[ {\begin{array}{c c}
    M_{11} &M_{12} \\
    M_{21} &M_{22}
  \end{array} } \right],
\end{align*}
{\small \begin{align*}
  M_{11} &:= \left[ {\begin{array}{c c c}
    \lambda &0 &\alpha L\\
    \alpha L_1 &1-\alpha \frac{\sigma^2_m}{{L_2}} &0 \\
    \lambda + \alpha \lambda L + \beta \frac{\lambda \sigma_M^2}{L} & \alpha m_1 + \beta m_2 &\lambda + \alpha \lambda L
  \end{array} } \right], \\
  M_{12} &:= \left[ {\begin{array}{c c c}
    0 &0 &0 \\
    0 &\alpha &0 \\
    \frac{\lambda}{L} + \alpha \lambda + \beta \lambda &\alpha \lambda + \beta \lambda &\beta \lambda
  \end{array} } \right] \sigma_M, \\
  M_{21} &:= \left[ {\begin{array}{c c c}
    0 &0 &0 \\
    \alpha \frac{\sigma_M L_1}{{\mu}} &\alpha m_3 &0 \\
    \frac{\lambda \sigma_M}{{L}} + \alpha \lambda \sigma_M + \beta \lambda \sigma_M &\alpha m_4 + \beta m_5 &\alpha \lambda \sigma_M
  \end{array} } \right], \\
  M_{22} &:= \left[ {\begin{array}{c c c}
    \lambda &0 &\beta {L}\\
    \alpha \frac{\sigma_M^2}{{\mu}} + \beta {L_2} &1 - \beta {\mu}\left( 1 - \frac{1}{c}\right) &0 \\
    \lambda + \alpha \frac{\lambda \sigma_M^2}{{L}} + \beta \lambda {L} & \alpha \frac{\lambda \sigma_M^2}{{L}} + \beta \lambda {L} &\lambda + \beta \lambda L
  \end{array} } \right],
\end{align*}}where~${c > \frac{2 L^2}{\sigma_m^2} + \frac{2 \sigma_M^2 {\kappa}}{\sigma_m^2} + 1 }$ and 
\begin{align*}
    m_1 &:= \lambda \left(L + \tfrac{\sigma_M^2}{{\mu}}\right), \qquad ~~~~m_2 := \lambda \tfrac{\sigma_M^2}{L} \left(1 + \tfrac{{L}}{{\mu}} \right),\\
    m_3 &:= \tfrac{\sigma_M}{\mu} \left( L_1 + \tfrac{\sigma_M^2}{\mu} \right), \qquad m_4 := \lambda \tfrac{\sigma_M}{{L}} \left(L + \tfrac{\sigma_M^2}{{\mu}} \right), \\
    m_5 &:= \lambda \sigma_M \left(1 + \tfrac{{L}}{{\mu}} \right).
\end{align*}

\subsection{LTI system for~$\GDAl$}\label{sys_gdal}
While deriving the LTI system for~$\GDAl$ under Assumptions~\ref{sm_scc},~\ref{cp_rank}, and~\ref{doub_stoc}, it can be verified that the only differences occur while evaluating the bounds on optimality gaps. Thus, we only write the two updated inequalities:
\begin{align*}
    \|\ol{\mb{x}}^{k+1} - \mb{x}^*\| &\leq \alpha \frac{L_1}{\sqrt{n}} \|\mb{x}^k - W_1^{\infty} {\mb{x}}^k\| + \alpha \sigma_M \|\ol{\mb{y}}^{k} - \nabla H^* (\ol{P} \ol{\mb{x}}^{k})\| \\
    &~~~+ \left(1-\alpha \frac{\sigma^2_m}{L_2} \right)\|\ol{\mb{x}}^{k} - \mb{x}^*\| + \alpha \frac{1}{\sqrt{n}}  \tau \|\mb{y}^k \|,
\end{align*}
\begin{align*}
    &\|\ol{\mb{y}}^{k+1} - \nabla H^* (\ol{P} \ol{\mb{x}}^{k+1})\| \\
    &\leq \left[1-\beta {\mu}\left(1 - \frac{1}{c} \right)  \right]\|\ol{\mb{y}}^{k} - \nabla H^* (\ol{P} \ol{\mb{x}}^{k})\| \\
    &~~~+ \frac{\beta}{c} \left(\frac{L_1 {\mu}}{\sigma_M} + \sigma_M\right) \|\ol{\mb{x}}^{k} - \mb{x}^*\| + \alpha \frac{\sigma_M L_1}{{\mu} \sqrt{n}} \| \mb{x}^k - {W}_1^\infty \mb{x}^k\| \\
    &~~~+ \left(\beta \frac{{L_2}}{\sqrt{n}} + \alpha \frac{\sigma_M^2}{{\mu}\sqrt{n}}\right) \|\mb{y}^k - {W}_2^\infty \mb{y}^k\| + \beta \frac{1}{\sqrt{n}} \tau \| \mb{x}^k \|,
\end{align*}
and Theorem~$\ref{th2}$ follows.
\subsection{System matrix for~$\GDAl$: Quadratic case}\label{appB}

To completely describe Lemma~\ref{gdal_quad}, we define the system matrix as follows
\begin{align*}
  \wt{M}_{\alpha, \beta}:= \left[ {\begin{array}{c c}
    \wt{M}_{11} &\wt{M}_{12} \\
    \wt{M}_{21} &\wt{M}_{22}
  \end{array} } \right],
\end{align*}
\begin{align*}
  \wt{M}_{11} &:= \left[ {\begin{array}{c c c}
    \ol{W}_1 &O &-\alpha W_1\\
    - \alpha \frac{(\mb{1}_n^\top \otimes I_{p_x}) \Lambda_Q}{n} &I_{p_x} - \alpha \ol{Q} &O \\
    \wt{m}_1 &\wt{m}_2 &\wt{m}_3
  \end{array} } \right], \\
  \wt{M}_{12} &:= \left[ {\begin{array}{c c c}
    O &O &O \\
    -\alpha \frac{(\mb{1}_n^\top \otimes I_{p_x}) \Lambda_{P^\top}}{n} &-\alpha \ol{P}^\top &O \\
    \wt{m}_4 &\wt{m}_5 &\wt{m}_6
  \end{array} } \right], \\
  \wt{M}_{21} &:= \left[ {\begin{array}{c c c}
    O &O &O \\
    \beta \frac{(\mb{1}_n^\top \otimes I_{p_y}) \Lambda_P}{n} &\beta \ol{P} &O \\
    \ol{m}_1 &\ol{m}_2 &\ol{m}_3
  \end{array} } \right], \\
  \wt{M}_{22} &:= \left[ {\begin{array}{c c c}
    \ol{W} &O &\beta W_2\\
    -\beta \frac{(\mb{1}_n^\top \otimes I_{p_y}) \Lambda_{R}}{n} &I_{p_y} - \beta \ol{R} &O \\
    \ol{m}_4 &\ol{m}_5 &\ol{m}_6
  \end{array} } \right],
\end{align*}
such that for any collection of matrices~${\{M_1, M_2, \cdots, M_n\}}$, we define~${\Lambda_{M}:=\mbox{diag}([M_1, M_2, \cdots, M_n])}$ as the block diagonal matrix where the~$i$-th diagonal element is~$M_i$. For~${\ol{W}_1:= W_1 - W_1^\infty}$,~${\ol{W}_2:= W_2 - W_2^\infty}$, other terms used in~${M_{\alpha, \beta}}$ are defined below
\begin{align*}
    \wt{m}_1 &= \ol{W}_1 \left[\Lambda_Q \left((W_1 - I_{n p_x}) - \alpha W^\infty_1  \Lambda_Q \right) + \beta \Lambda_{P^\top} W_2^\infty \Lambda_{P}\right], \\
    \wt{m}_2 &= \ol{W}_1 \left[-\alpha \Lambda_Q W_1^\infty \Lambda_Q + \beta \Lambda_{P^\top} W_2^\infty \Lambda_{P}\right] (\mb{1}_n\otimes I_{p_x}), \\
    \wt{m}_3 &= \left[\ol{W}_1 - \alpha \Lambda_Q W_1 \right], \\
    \wt{m}_4 &= \ol{W}_1 \left[-\alpha \Lambda_Q W_1^\infty \Lambda_{P^\top} + \Lambda_{P^\top} \left((W_2 - I_{n p_y}) - \beta W_2^\infty \Lambda_{R} \right) \right], \\
    \wt{m}_5 &= \ol{W}_1 \left[-\alpha \Lambda_Q W_1^\infty \Lambda_{P^\top} - \beta \Lambda_{P^\top} W_2^\infty \Lambda_{R}\right] (\mb{1}_n\otimes I_{p_y}), \\
    \wt{m}_6 &= \ol{W}_1 \left[\beta \Lambda_{P^\top} W_2 \right],\\
    \ol{m}_1 &= \ol{W}_2 \left[- \beta \Lambda_{R} W_2^\infty \Lambda_P + \Lambda_P \left((W_1-I_{n p_x})-\alpha W_1^\infty \Lambda_Q \right) \right], \\
    \ol{m}_2 &= \ol{W}_2 \left[- \beta \Lambda_{R} W_2^\infty \Lambda_P - \alpha \Lambda_P W_1^\infty \Lambda_Q\right] (\mb{1}_n\otimes I_{p_x}), \\
    \ol{m}_3 &= \ol{W}_2 \left[-\alpha \Lambda_P W_1 \right],\\
    \ol{m}_4 &= \ol{W}_2 \left[- \Lambda_{R} \left((W_2 - I_{n p_y}) - \beta W_2^\infty \Lambda_{R} \right) -\alpha \Lambda_P W^\infty_1 \Lambda_{P^\top} \right], \\
    \ol{m}_5 &= \ol{W}_2 \left[\beta \Lambda_{R} W_2^\infty \Lambda_{R} - \alpha \Lambda_P W_1^\infty \Lambda_{P^\top} \right] (\mb{1}_n\otimes I_{p_y}), \\
    \ol{m}_6 &= \left[\ol{W}_2 - \beta \Lambda_R W_2 \right].
\end{align*}

\end{document}